\theoremstyle{plain}
\theoremstyle{plain}
\newtheorem{theorem}{Theorem}[section]
\newtheorem{proposition}[theorem]{Proposition}
\newtheorem{lemma}[theorem]{Lemma}
\theoremstyle{definition}
\newtheorem{definition}[theorem]{Definition}
\newtheorem{remark}[theorem]{Remark}
\theoremstyle{remark}
\numberwithin{equation}{section}
\newcommand{\N}{\mathbb{N}}
\newcommand{\R}{\mathbb{R}}
\def\sob{W^{1,p}_{0}(\Omega)}
\def\luo{L^{1}(\Omega)}
\def\lio{{L^{\infty}(\Omega)}}
\def\co{{C(\overline\Omega)}}
\def\cuo{{C^1_0(\overline\Omega)}}
\def\into{\int_{\Omega}}
\def\ae{\mathrm{a.e.}}
\def\uhgamma{{\underline{h}_{\gamma_1,\gamma_2}(L)}}
\def\ohgamma{{\overline{h}_{\gamma_1,\gamma_2}(L)}}
\begin{document}
\title[Regularizing effect of the natural growth term in quasilinear problems]{Regularizing effect of the natural growth term in quasilinear problems with sign-changing nonlinearities}

\author[J. Carmona Tapia]{Jos\'{e} Carmona Tapia}
\address[Jos\'{e} Carmona Tapia]{Departamento de Matem\'aticas\\ Uni\-ver\-si\-dad de Alme\-r\'ia\\Ctra. Sacramento s/n\\
La Ca\-\~{n}a\-da de San Urbano\\ 04120 - Al\-me\-r\'{\i}a, Spain}
\email{jcarmona@ual.es}

\author[P. Malanchini]{Paolo Malanchini}
\address[Paolo Malanchini]{Dipartimento di Matematica e Applicazioni\\ Universit\`a degli Studi di Milano - Bicocca, via Roberto Cozzi 55, 20125 - Milano, Italy}
\email{p.malanchini@campus.unimib.it}

\author[A. J. Mart\'{i}nez Aparicio]{Antonio J. Mart\'{i}nez Aparicio}
\address[Antonio J. Mart\'{i}nez Aparicio]{Departamento de Matem\'aticas\\ Uni\-ver\-si\-dad de Alme\-r\'ia\\Ctra. Sacramento s/n\\
La Ca\~{n}ada de San Urbano\\ 04120 - Al\-me\-r\'{\i}a, Spain}
\email{ama194@ual.es}

\author[P. J. Mart\'{i}nez-Aparicio]{Pedro J. Mart\'{i}nez-Aparicio}
\address[Pedro J. Mart\'{i}nez-Aparicio]{Departamento de Matem\'aticas\\ Uni\-ver\-si\-dad de Alme\-r\'ia\\Ctra. Sacramento s/n\\
La Ca\~{n}ada de San Urbano\\ 04120 - Al\-me\-r\'{\i}a, Spain}
\email{pedroj.ma@ual.es}

\keywords{Gradient term, Positive solutions, Nonlinearities having zeros.} \subjclass[2020]{35A01, 35B09, 35B40, 35J25, 35J92.}

\begin{abstract}

We investigate the existence and nonexistence of solutions to the Dirichlet problem
\begin{equation*}
\tag{$P$}
\label{eq:PbAbstract}
\left\{ \begin{alignedat}{2}
-\Delta_p u + g(u) |\nabla u|^p &= \lambda f(u) \quad &&\mbox{in} \;\; \Omega, \\
u &= 0 \quad &&\mbox{on} \;\; \partial\Omega, 
\end{alignedat}
\right.
\end{equation*}
where $\Omega\subset \R^N$ is a smooth bounded domain, $p\in (1,\infty)$, $\lambda>0$ and $g\in C(\R)$. Our main assumption is that $f\colon \R\to \R$ is a continuous function such that $f(s)>0$ for all $s\in (\alpha,\beta)$, where $0<\alpha<\beta$ are two zeros of $f$. 

If $f(0)\geq 0$, we show that an area condition involving $f$ and $g$ is both sufficient and necessary in order to have a pair $(\lambda,u)\in \R^+\times \cuo$, with $u\geq 0$ and $\|u\|_\co\in (\alpha,\beta]$, solving~\eqref{eq:PbAbstract}. 

We also study how the presence of the gradient term affects the existence of solution. Roughly speaking, the more negative $g$ is, the stronger its regularizing effect on~\eqref{eq:PbAbstract}. We prove that, regardless of the shape of $f$, for any fixed $\lambda$, there always exists a function $g$ such that~\eqref{eq:PbAbstract} admits a nonnegative solution with maximum in $(\alpha,\beta]$.
\end{abstract}

\maketitle
 
\tableofcontents

\section{Introduction}
In this article, we study the quasilinear problem
\begin{equation}
\label{eq:PbCV}
\left\{ \begin{alignedat}{2}
-\Delta_p u + g(u) |\nabla u|^p &= \lambda f(u) \quad &&\mbox{in} \;\; \Omega, \\
u &= 0 \quad &&\mbox{on} \;\; \partial\Omega, 
\end{alignedat}
\right.
\end{equation}
where $\Omega\subset \R^N$ ($N\geq 1$) is a smooth bounded domain, $p\in (1,\infty)$, $\lambda>0$ is a parameter and $g\colon \R\to \R$ is just a continuous function. Moreover, we suppose that $f\colon \R\to \R$ is continuous, has two consecutive positive zeros, and is positive between them. We denote these two zeros by $\alpha$ and $\beta$ with $0<\alpha<\beta$. Then, we are assuming that $f(\alpha)=f(\beta)=0$ and $f(s)>0$ for all $s\in(\alpha,\beta)$. We underline that $f$ is allowed to change sign outside the interval $[\alpha,\beta]$.

Under these hypotheses, problem~\eqref{eq:PbCV} has been widely studied when the natural growth term is not present, i.e., when $g\equiv 0$. The study of problems like
\begin{equation}
\label{eq:PbSemi}
\left\{ \begin{alignedat}{2}
-\Delta_p u  &= \lambda f(u) \quad &&\mbox{in} \;\; \Omega, \\
u &= 0 \quad &&\mbox{on} \;\; \partial\Omega,
\end{alignedat}
\right.
\end{equation}
goes back to~\cite{Hess}. For the usual Laplacian operator ($p=2$) and assuming $f(0)\geq 0$, the author proved that an area condition on $f$ is sufficient to guarantee the existence of two nonnegative solutions to~\eqref{eq:PbSemi} with maximum in $(\alpha,\beta]$ for large $\lambda$. Specifically, this area condition is
\begin{equation}
\label{eq:area_cond_semi}
    \int_{s}^\beta f(\eta) \ \mathrm d \eta>0,\ \forall s\in [0,\beta).
\end{equation}
Later, in~\cite{Cl-Sw} and~\cite{Dan-Sch}, the authors showed using different techniques that~\eqref{eq:area_cond_semi} is also necessary in order to have, for some $\lambda>0$, a nonnegative solution $u\in C^2(\overline\Omega)$ to~\eqref{eq:PbSemi} (with $p=2$) such that $\|u\|_\co\in [\alpha,\beta]$.

For the $p$-Laplacian, these results have been suitably extended. Two different proofs of the sufficiency and necessity of~\eqref{eq:area_cond} for the existence of solutions to~\eqref{eq:PbSemi} with maximum in $[\alpha,\beta]$ can be found in~\cite{Guo1} and~\cite{Ng-Sch}. Beyond existence, some qualitative properties of these solutions have also been studied, mainly in the case $p=2$. Regarding the behaviour of the solutions as $\lambda$ goes to infinity, we refer the reader to~\cite{Barrios, GuoWebb, Dan-Wei, Jang} when $f$ is regular (say $C^1$), and to~\cite{Guo2, Guo3} when $f$ is not differentiable at $\alpha$ or $\beta$. The specific structure of the solution set of~\eqref{eq:PbCV} is investigated in~\cite{Senos, Wei}.

In recent years, this kind of Dirichlet problem in bounded domains with sign-changing nonlinearities has been widely studied for other operators. Among the operators considered are the $p(x)$-Laplacian (\cite{Ho-Kim}), the $\phi$-Laplacian (\cite{Correa}), the 1-Laplacian (\cite{1-Lap}), the fractional Laplacian (\cite{Car-Fi}), a Kirchhoff operator (\cite{Arc-Car-MaAp}) and a Schr\"o\-din\-ger-type operator (\cite{dS-Sil}). In all these works, condition~\eqref{eq:area_cond_semi} is assumed to establish the existence of nonnegative solutions with maximum in $[\alpha,\beta]$; however, the necessity of~\eqref{eq:area_cond_semi} has been less explored. We point out that the situation changes under Robin boundary conditions; in that case, no area condition is required, as shown in~\cite{Robin}.

An original contribution of the present work is the derivation of an area condition for problems of the form~\eqref{eq:PbCV}, extending the classical condition~\eqref{eq:area_cond_semi}. The condition for problem~\eqref{eq:PbCV} reflects an interaction between the nonlinearities $f$ and $g$ that has not been previously characterized which, when $g\not\equiv 0$, completely differs from~\eqref{eq:area_cond_semi}. This highlights new structural properties in Dirichlet problems involving nonlinearities with multiple zeros. Concretely, we prove that the area condition for~\eqref{eq:PbCV} is
\begin{equation}
\label{eq:area_cond}
    \int_{s}^\beta f(\eta) e^{-\frac{p}{p-1}G(\eta)} \ \mathrm d \eta>0,\ \forall s\in [0,\beta),
\end{equation}
where $G(s)\coloneq \int_0^s g(\eta) \ \mathrm{d}\eta$ for all $s\in \R$. A major difficulty in dealing with~\eqref{eq:PbCV}, unlike the operators mentioned above, is the absence of a variational structure. We overcome this issue by transforming~\eqref{eq:PbCV} into an equivalent variational problem and show that~\eqref{eq:area_cond} is sufficient and necessary for the existence of a pair $(\lambda, u)$, with $u\geq 0$ and $\|u\|_\co\in [\alpha,\beta]$, solving~\eqref{eq:PbCV}.

Our first result is as follows.

\begin{theorem}
\label{th:existence}
Let $f\in C(\R)$ satisfy $f(s)>0$ for $s\in (\alpha,\beta)$, where $0<\alpha<\beta$ are two zeros of $f$, and let $g\in C(\R)$. Then the following holds:
\begin{enumerate}[i)]
    \item If $f$ verifies~\eqref{eq:area_cond} and $f(0)\geq 0$, then there is some $\overline{\lambda}>0$ such that, for every $\lambda> \overline{\lambda}$, problem~\eqref{eq:PbCV} has a nonnegative solution $u\in\cuo$ with $\|u\|_\co \in (\alpha,\beta]$. 
    \item If $f$ does not satisfy~\eqref{eq:area_cond}, then problem~\eqref{eq:PbCV} admits no nonnegative solution with maximum in $[\alpha,\beta]$ for any $\lambda>0$.
\end{enumerate}
\end{theorem}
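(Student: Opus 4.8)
The plan is to remove the gradient term by a change of the unknown, reducing \eqref{eq:PbCV} to a gradient-free problem of the form \eqref{eq:PbSemi}, to which the results of \cite{Guo1,Ng-Sch} apply. I would introduce the strictly increasing $C^2$ function
\[
\Psi(s)\coloneq\int_0^s e^{-\frac{1}{p-1}G(t)}\,\mathrm dt,\qquad s\in\R,
\]
so that $\Psi(0)=0$, $\Psi'(s)=e^{-\frac{1}{p-1}G(s)}>0$, and $(p-1)\Psi''(s)+g(s)\Psi'(s)=0$. First I would verify, working on the weak formulation, that a nonnegative $u\in\cuo$ solves \eqref{eq:PbCV} if and only if $v\coloneq\Psi(u)\in\cuo$ solves the transformed problem $-\Delta_p v=\lambda\,\widetilde f(v)$ in $\Omega$, $v=0$ on $\partial\Omega$, where $\widetilde f(t)\coloneq e^{-G(\Psi^{-1}(t))}f(\Psi^{-1}(t))$. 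Indeed, the identity
\[
-\Delta_p v=(\Psi'(u))^{p-1}(-\Delta_p u)-(p-1)(\Psi'(u))^{p-2}\Psi''(u)|\nabla u|^p,
\]
combined with the ODE satisfied by $\Psi$, makes every $|\nabla u|^p$ contribution cancel once $-\Delta_p u$ is replaced using \eqref{eq:PbCV}, leaving exactly $-\Delta_p v=\lambda e^{-G(u)}f(u)$. Since $\Psi$ is a $C^1$-diffeomorphism with $\Psi'$ bounded away from $0$ on compacts, this correspondence preserves the class $\cuo$, the boundary condition, and the nonnegativity of the solution.

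Next I would record the structural properties of $\widetilde f$. Because $\Psi$ is an increasing homeomorphism with $\Psi(0)=0$, the function $\widetilde f$ is continuous, vanishes exactly at $\widetilde\alpha\coloneq\Psi(\alpha)$ and $\widetilde\beta\coloneq\Psi(\beta)$, is positive on $(\widetilde\alpha,\widetilde\beta)$, and satisfies $\widetilde f(0)=f(0)\geq 0$. The decisive point is that \eqref{eq:area_cond} for $f$ is equivalent to the classical area condition \eqref{eq:area_cond_semi} written for $\widetilde f$: substituting $\eta=\Psi(\xi)$, $\mathrm d\eta=e^{-\frac{1}{p-1}G(\xi)}\,\mathrm d\xi$, and using $1+\frac{1}{p-1}=\frac{p}{p-1}$, one obtains
\[
\int_s^{\widetilde\beta}\widetilde f(\eta)\,\mathrm d\eta=\int_{\Psi^{-1}(s)}^{\beta}f(\xi)\,e^{-\frac{p}{p-1}G(\xi)}\,\mathrm d\xi,
\]
so that, as $s$ runs through $[0,\widetilde\beta)$ and $\Psi^{-1}(s)$ runs through $[0,\beta)$, positivity of the left-hand side for all such $s$ is precisely \eqref{eq:area_cond}.

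With this dictionary, both assertions reduce to the gradient-free theory. For part i), since $\widetilde f$ satisfies \eqref{eq:area_cond_semi} and $\widetilde f(0)\geq 0$, the existence results for \eqref{eq:PbSemi} in \cite{Guo1,Ng-Sch} furnish, for all large $\lambda$, a nonnegative $v\in\cuo$ solving the transformed problem with $\|v\|_\co\in(\widetilde\alpha,\widetilde\beta]$; then $u\coloneq\Psi^{-1}(v)\geq 0$ solves \eqref{eq:PbCV}, and since $\Psi^{-1}$ is increasing with $\Psi^{-1}(\widetilde\alpha)=\alpha$ and $\Psi^{-1}(\widetilde\beta)=\beta$, one gets $\|u\|_\co\in(\alpha,\beta]$. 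For part ii), if \eqref{eq:area_cond} fails then \eqref{eq:area_cond_semi} fails for $\widetilde f$, and the nonexistence results of \cite{Ng-Sch} (in the spirit of \cite{Cl-Sw,Dan-Sch}) forbid any nonnegative solution of the transformed problem with maximum in $[\widetilde\alpha,\widetilde\beta]$; transporting back through $\Psi^{-1}$ excludes any nonnegative solution of \eqref{eq:PbCV} with maximum in $[\alpha,\beta]$.

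The main obstacle I anticipate is not the formal computation but the rigorous verification that the change of variable is an \emph{exact} equivalence in the weak ($\cuo$) setting: one must check that $v=\Psi(u)$ transforms admissible test functions correctly, so that the cancellation of the $|\nabla u|^p$ terms holds in the distributional sense and not merely pointwise, and that $v\in\cuo$ with its boundary datum is genuinely equivalent to $u\in\cuo$ under $\Psi$. A secondary technical point is to confirm, when invoking \cite{Guo1,Ng-Sch}, that the solutions produced remain in the range where $\Psi^{-1}$ is defined and where $\widetilde f$ carries exactly the regularity those results require; here this is guaranteed by the constraint $\|v\|_\co\le\widetilde\beta$.
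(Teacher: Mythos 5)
Your proposal is correct and follows essentially the same route as the paper: the same change of unknown $v=\Psi(u)$ (the paper's Proposition~\ref{prop:equiv}, proved there by inserting the test functions $e^{-G(u)}\phi$ and $e^{G(u)}\varphi$ into the weak formulations, which is exactly the distributional verification you flag as the main obstacle), the same substitution $\eta=\Psi(\xi)$ showing that \eqref{eq:area_cond} for $f$ is the classical condition \eqref{eq:area_cond_semi} for $\tilde f$, and the same appeal to the existence and nonexistence results of \cite{Ng-Sch} (and \cite{Guo1}) for the transformed problem \eqref{eq:PbSemiCV}.
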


A significant consequence of this work is that, in certain cases, condition~\eqref{eq:area_cond} imposes a less restrictive constraint on $f$ than the classical condition~\eqref{eq:area_cond_semi}. Therefore, even when problem~\eqref{eq:PbSemi} admits no nonnegative solutions with maxima in $[\alpha,\beta]$ because~\eqref{eq:area_cond_semi} is not satisfied, the gradient term in~\eqref{eq:PbCV} may induce a regularizing effect, allowing the existence of such solutions for~\eqref{eq:PbCV} nonetheless. This establishes a novel mechanism by which the gradient term expands the solvability framework of problem~\eqref{eq:PbSemi}. To better illustrate this phenomenon, we include a parameter multiplying the natural growth term and we analyse the existence and nonexistence of solution when the parameter varies. The family of problems considered is
\begin{equation}
\label{eq:PbL}
\left\{ \begin{alignedat}{2}
-\Delta_p u + L g(u) |\nabla u|^p &= \lambda f(u) \quad &&\mbox{in} \;\; \Omega, \\
u &= 0 \quad &&\mbox{on} \;\; \partial\Omega, 
\end{alignedat}
\right.
\end{equation}
where $L\in \R$. In addition, $g$ is assumed to have constant sign. Since $L\in \R$, without loss of generality, we may assume that $g$ is positive.  

We begin by studying the range of $L$'s for which a solution $u\in\cuo$ to~\eqref{eq:PbL} with $u\geq 0$ and $\|u\|_\co\in [\alpha,\beta]$ exists for some $\lambda>0$. To simplify the notation, we define the set
\begin{equation}
    \label{eq:def_Lfrak}
    \mathfrak{L} \coloneqq \left\{L\in \R: \exists \lambda>0 \text{ s.t. } \eqref{eq:PbL} \text{ has a solution } 0\leq u\in \cuo \text{ with } \|u\|_\co\in [\alpha,\beta] \right\}.
\end{equation}
As we prove, this set, which is closely related to the area condition, is nonempty for any given $f$, regardless of its shape. Furthermore, for $L\in\mathfrak{L}$, we also analyse the behaviour as $L$ varies of the quantities
\begin{align}
    \label{eq:def_lambda_min}
    \lambda_{\rm min}(L) &\coloneqq \inf \left\{\lambda\geq 0: \eqref{eq:PbL} \text{ has a solution } 0\leq u\in \cuo \text{ with } \|u\|_\co\in [\alpha,\beta] \right\}, \\
    \label{eq:def_olambda_min}
    \overline{\lambda}_{\rm min}(L) &\coloneqq \inf \left\{\overline{\lambda}\geq 0: \eqref{eq:PbL} \text{ has a solution } 0\leq u\in \cuo \text{ with } \|u\|_\co\in [\alpha,\beta] \text{ for any } \lambda > \overline{\lambda} \right\}.
\end{align}
Observe that, by definition, $\lambda_{\rm min}(L) \leq \overline{\lambda}_{\rm min}(L)$.

The next result asserts that, regardless of the behaviour of $f$ outside the interval $[\alpha,\beta]$, there always exists a parameter $L\in \R$ such that problem~\eqref{eq:PbL} admits a solution with its maximum attained in $[\alpha,\beta]$ for sufficiently large values of $\lambda$. Hence, the presence of the natural growth term induces a regularizing effect on problem~\eqref{eq:PbCV}. In heuristic terms, the magnitude and sign of the function $g$ modulate this effect. Roughly speaking, the more negative $g$ is, the stronger the regularizing effect becomes. Conversely, if $g$ is sufficiently positive, a nonexistence phenomenon arises and no solution with maximum in $[\alpha,\beta]$ exists for any $\lambda>0$.

\begin{theorem}
\label{th:Lfrak}
Let $f\in C(\R)$ satisfy $f(0)\geq 0$ and $f(s)>0$ for $s\in (\alpha,\beta)$, where $0<\alpha<\beta$ are two zeros of $f$, and let $g\in C(\R)$ be positive. Then there exists $\tilde L\in (-\infty,\infty]$ such that $\mathfrak{L}$, defined in~\eqref{eq:def_Lfrak}, verifies
\[
\mathfrak{L} = (-\infty,\tilde L),
\]
where $\tilde L = \infty$ if $f\geq 0$ and $\tilde L<\infty$ if $f$ changes sign in $[0,\beta]$.

Furthermore, for $L<\tilde L$, the quantities $\lambda_{\rm min}(L)$ and $\overline{\lambda}_{\rm min}(L)$ defined in~\eqref{eq:def_lambda_min} and~\eqref{eq:def_olambda_min} are positive and satisfy the following:
\begin{enumerate}[i)]
    \item $\overline{\lambda}_{\rm min}(L)\to 0$ as $L\to -\infty$,
    \item $\lambda_{\rm min}(L)\to \infty$ as $L\to \tilde L$.
\end{enumerate}
\end{theorem}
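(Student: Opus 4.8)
The plan is to reduce every assertion to a study of the $L$-dependent area functional. By Theorem~\ref{th:existence}, a real number $L$ lies in $\mathfrak{L}$ precisely when the area condition for~\eqref{eq:PbL} holds, i.e. iff
\[
A_L(s):=\int_s^\beta f(\eta)\,e^{-\frac{p}{p-1}LG(\eta)}\,\mathrm d\eta>0\qquad\text{for all }s\in[0,\beta).
\]
Since $f>0$ on $(\alpha,\beta)$ and $A_L'(s)=-f(s)\,e^{-\frac{p}{p-1}LG(s)}$, the map $A_L$ is strictly decreasing on $(\alpha,\beta)$ with $A_L(\beta)=0$, so $A_L>0$ automatically on $[\alpha,\beta)$; hence $L\in\mathfrak{L}$ is equivalent to $m(L):=\min_{s\in[0,\alpha]}A_L(s)>0$, and the whole structural part becomes elementary analysis of $A_L$.

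First I would show $\mathfrak{L}$ is a down-set. With $w_L:=e^{-\frac{p}{p-1}LG}$ and $L_1<L_2$, the ratio $\phi:=w_{L_1}/w_{L_2}=e^{\frac{p}{p-1}(L_2-L_1)G}$ is positive and strictly increasing (because $g>0$ makes $G$ strictly increasing). Writing $F:=A_{L_2}$ and integrating by parts,
\[
A_{L_1}(s)=\int_s^\beta \phi\,(-F')\,\mathrm d\eta=\phi(s)F(s)+\int_s^\beta\phi'(\eta)F(\eta)\,\mathrm d\eta,
\]
using $F(\beta)=0$; as $\phi,\phi'\ge0$ and $F>0$ on $[0,\beta)$ when $L_2\in\mathfrak{L}$, this is positive, so $L_1\in\mathfrak{L}$. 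Continuity of $m$ (joint continuity of $A_L(s)$ on a compact set) shows $\mathfrak{L}=\{m>0\}$ is open, and an open down-set is $(-\infty,\tilde L)$. Nonemptiness ($\tilde L>-\infty$) follows from a Laplace-type estimate as $L\to-\infty$: the weight concentrates near $\beta$, where $f>0$, so on a fixed $[\eta_1,\eta_2]\subset(\alpha,\beta)$ the positive contribution $\sim e^{-\frac{p}{p-1}LG(\eta_1)}$ dominates the negative part $\le(\int_0^\alpha f^-)\,e^{-\frac{p}{p-1}LG(\alpha)}$, giving $A_L>0$ uniformly on $[0,\alpha]$. The value of $\tilde L$ is settled by the same localization: if $f\ge0$ on $[0,\beta]$ then $A_L(s)>0$ for all $s$ and all $L$, so $\tilde L=\infty$; if $f$ changes sign then $f<0$ on some $(a,b)\subset[0,\alpha)$, and evaluating at $s=a$ and factoring out $e^{-\frac{p}{p-1}LG(a)}$ one sees the near-$a$ negative block (weight $\sim e^{-\frac{p}{p-1}LG(b')}$, $b'<b$) beats the tail over $[b,\beta]$ (weight $\le\|f\|_\co\,e^{-\frac{p}{p-1}LG(b)}$) as $L\to+\infty$, since $G(b)>G(b')$; thus $A_L(a)<0$ for large $L$ and $\tilde L<\infty$.

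For the quantitative statements I would use the change of variables $v=\Psi_L(u)$ with $\Psi_L(s)=\int_0^s e^{-\frac{L}{p-1}G(t)}\,\mathrm dt$, which turns~\eqref{eq:PbL} into the variational problem $-\Delta_p v=\lambda\widehat f_L(v)$, where $\widehat f_L(v)=e^{-LG(u)}f(u)$ and $u=\Psi_L^{-1}(v)$ (the transformation underlying Theorem~\ref{th:existence}). Positivity: comparing $v$ with $\phi_\Omega$, the solution of $-\Delta_p\phi_\Omega=1$, gives $\|v\|_\co\le(\lambda\|\widehat f_L\|_\co)^{1/(p-1)}\|\phi_\Omega\|_\co$, and $\|v\|_\co=\Psi_L(\|u\|_\co)\ge\Psi_L(\alpha)>0$ yields $\lambda\ge\Psi_L(\alpha)^{p-1}/(\|\widehat f_L\|_\co\|\phi_\Omega\|_\co^{p-1})>0$; hence $\lambda_{\rm min}(L)>0$ and $\overline\lambda_{\rm min}(L)\ge\lambda_{\rm min}(L)>0$, finiteness of $\overline\lambda_{\rm min}(L)$ being Theorem~\ref{th:existence}~i). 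For limit~i), as $L\to-\infty$ the factor $e^{-LG}$ makes $\widehat f_L$ arbitrarily large on the range corresponding to $u\in(\alpha,\beta)$; a (suitably truncated) rescaled torsion subsolution on a ball $B\subset\subset\Omega$ then reaches above $\Psi_L(\alpha)$ while staying below the supersolution $\Psi_L(\beta)$, for any fixed $\lambda>0$, once $L$ is negative enough, so $\overline\lambda_{\rm min}(L)\le\lambda$ for such $L$ and therefore $\overline\lambda_{\rm min}(L)\to0$.

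Limit~ii) is the main obstacle. When $\tilde L<\infty$ I would argue by compactness: if $\lambda_{\rm min}(L_n)$ stayed bounded along some $L_n\to\tilde L$, choose near-optimal solutions $u_n$ with $\lambda_n$ bounded and $\|u_n\|_\co\in[\alpha,\beta]$; the transformed $v_n$ solve $-\Delta_p v_n=\lambda_n\widehat f_{L_n}(v_n)$ with uniformly bounded right-hand side, so $C^{1,\gamma}$ estimates give $v_n\to v_*$ in $C^1(\overline\Omega)$, and since $\Psi_{L_n}\to\Psi_{\tilde L}$, $\widehat f_{L_n}\to\widehat f_{\tilde L}$, the limit is a solution of~\eqref{eq:PbL} at $L=\tilde L$ with maximum in $[\alpha,\beta]$ (if $\lambda_n\to0$ the limit is $u_*\equiv0$, impossible as $\|u_*\|_\co\ge\alpha$), contradicting $\tilde L\notin\mathfrak{L}$. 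The delicate case is $\tilde L=\infty$ (that is $f\ge0$), where $\Psi_L$ degenerates ($\Psi_L(\beta)\to0$) and this compactness breaks down; there I expect to need a sharp lower bound forcing $\lambda_{\rm min}(L)\to\infty$ directly, e.g. through symmetrization reducing to the radial/one-dimensional time map, in which the factor $e^{-\frac{L}{p-1}G}$ appearing in $\mathrm dv$, weighed against the available potential $\int\widehat f_L$, inflates the admissible $\lambda$. Proving this blow-up rigorously — and reconciling it with the fact that the crude $L^\infty$ bound above tends to $0$ as $L\to\infty$ — is the step I anticipate to be hardest.
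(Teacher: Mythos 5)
The structural half of your proposal is correct and, in one respect, nicer than the paper's: the paper proves that $\mathfrak{L}$ is a down-set by a PDE argument (a solution at $L_1$ is a strict subsolution for any $L<L_1$ since $g>0$, and $\beta$ is a supersolution, so the sub-supersolution theorem of~\cite{BMP} applies), whereas your integration-by-parts identity $A_{L_1}(s)=\phi(s)A_{L_2}(s)+\int_s^\beta\phi'(\eta)A_{L_2}(\eta)\,\mathrm d\eta$ settles it purely at the level of the area condition. Your asymptotic estimates characterizing $\tilde L$, your torsion-comparison proof that $\lambda_{\rm min}(L)>0$, and your compactness argument for ii) when $\tilde L<\infty$ (which is exactly the role of the paper's stability Lemma~\ref{lem:stab} in Proposition~\ref{prop:infty}) are all sound. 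The two quantitative limits, however, contain genuine gaps.

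For limit i), your construction fails. A rescaled or truncated torsion profile $w$ satisfies $-\Delta_p w=c^{p-1}>0$ wherever it is not constant; but any nonnegative subsolution with $\|w\|_\co>\Psi_L(\alpha)$ and zero boundary data must take every value in $(0,\Psi_L(\alpha))$, and on the open set where its values lie in the region where $\tilde f_L<0$ (nonempty precisely when $f$ changes sign in $(0,\alpha)$, the case this theorem is really about), the subsolution inequality forces $-\Delta_p w\leq\lambda\tilde f_L(w)<0$. Even for $f\geq0$ with $f(0)=0$ the construction breaks near $\partial B$, where one would need $c^{p-1}\leq\lambda\tilde f_L(w)\approx\lambda f(0)=0$. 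This is why the paper does not use subsolutions here at all: Proposition~\ref{prop:hess} is proved by globally minimizing the transformed functional $I_{\lambda,L}$ and comparing the minimizer's energy with that of a plateau cutoff at height $\Psi_L(\gamma_2)$; the comparison closes only thanks to the asymptotics of Lemma~\ref{lem:technical}, namely $\overline{h}_{\gamma_1,\gamma_2}(L)/\underline{h}_{\gamma_1,\gamma_2}(L)\to1$ and $\Psi_L(\gamma_2)^p/\underline{h}_{\gamma_1,\gamma_2}(L)\to0$ as $L\to-\infty$, which quantify how the exponential weight simultaneously crushes the contribution of the sign-changing region and the gradient cost of the plateau. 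Some quantitative input of this kind is unavoidable, and nothing in your sketch supplies it.

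For limit ii) you explicitly concede the case $\tilde L=\infty$ (i.e. $f\geq0$), and the paper's resolution uses an idea absent from your sketch: work in the \emph{original} variables and use the gradient term itself, with $L_n\to+\infty$, as the coercive quantity. Testing~\eqref{eq:PbStab} with $\frac1\varepsilon T_\varepsilon(u_n)$, dropping the principal part, and using $g\geq\min_{[0,\beta]}g>0$ gives $L_n\into|\nabla u_n|^p\leq C$ uniformly, hence $u_n\to0$ in $\sob$; then a Stampacchia iteration (Lemma~\ref{lem:Stamp}) with the test functions $G_k(u_n)$ — where the gradient term may be discarded since $L_n>0$, $g>0$, $G_k(u_n)\geq0$ — upgrades this to $\|u_n\|_\co\to0$, contradicting $\|u_n\|_\co\geq\alpha$. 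No symmetrization or time-map analysis is needed, and this also resolves the tension you point out: the change of variables indeed degenerates ($\Psi_L(\alpha)\to0$), which is exactly why the paper abandons it in this case.
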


An important corollary of Theorem~\ref{th:Lfrak} is that, given any fixed $\lambda$, there are two numbers $L_1<L_2$ such that, if $L<L_1$, problem~\eqref{eq:PbL} admits a solution $0\leq u\in\cuo$ to~\eqref{eq:PbL} with $\|u\|_\co\in [\alpha,\beta]$, whereas if $L>L_2$, no such solution exists for~\eqref{eq:PbL}. This allows us to prove, for any fixed $\lambda$, the existence of a maximal solution among the solutions contained in the interval $[0,\beta]$ to problem~\eqref{eq:PbL} when $L$ is negative enough. In the following, we investigate the behaviour of these maximal solutions as $L\to -\infty$.

\begin{theorem}
\label{th:conv_beta}
Let $f\in C(\R)$ satisfy $f(0)\geq 0$ and $f(s)>0$ for $s\in (\alpha,\beta)$, where $0<\alpha<\beta$ are two zeros of $f$, and let $g\in C(\R)$ be positive. Given $\lambda>0$, there is some $L_\lambda\in \R$ such that the maximal solution $\overline u_L$ in the interval $[0,\beta]$ to problem~\eqref{eq:PbL} exists for every $L\leq L_\lambda$, and
\[
\|\overline{u}_L\|_{\co} \to \beta \text{ as } L\to -\infty.
\]
\end{theorem}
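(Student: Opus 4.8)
The plan is to split the statement into two parts: the existence of the maximal solutions $\overline u_L$ for all sufficiently negative $L$, and the convergence $\|\overline u_L\|_\co\to\beta$. For the existence, I would invoke Theorem~\ref{th:Lfrak}(i): since $\overline{\lambda}_{\rm min}(L)\to 0$ as $L\to-\infty$, there is $L_\lambda\in\R$ with $\overline{\lambda}_{\rm min}(L)<\lambda$ for all $L\le L_\lambda$, so for such $L$ problem~\eqref{eq:PbL} has a nonnegative solution with maximum in $[\alpha,\beta]$. As the constant $\beta$ is a supersolution of~\eqref{eq:PbL} (because $f(\beta)=0$) lying above any solution with maximum $\le\beta$, the comparison principle available for~\eqref{eq:PbL} (through the change of variables making it a $p$-Laplacian problem) produces a maximal solution $\overline u_L$ in $[0,\beta]$. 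The upper bound $\|\overline u_L\|_\co\le\beta$ is then immediate, so the whole content is the lower bound.

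The key reduction is that the maximal solution dominates every subsolution lying in $[0,\beta]$: if $w$ is such a subsolution, then $w\le\beta$ gives a solution of~\eqref{eq:PbL} between $w$ and $\beta$, which is dominated by $\overline u_L$, whence $\overline u_L\ge w$. Thus it suffices to show that for every $\epsilon>0$ there is $L(\epsilon)\le L_\lambda$ such that, for all $L\le L(\epsilon)$, problem~\eqref{eq:PbL} admits a subsolution $w$ with $0\le w\le\beta$ and $\|w\|_\co\ge\beta-\epsilon$; this forces $\|\overline u_L\|_\co\ge\beta-\epsilon$ and, combined with the upper bound, yields the claim. To build such a $w$, I would fix $\gamma:=\beta-\epsilon\in(\alpha,\beta)$ and choose $\eta>0$ with $\gamma-\eta>\alpha$ and $m:=\min_{[\gamma-\eta,\gamma]}f>0$. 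On a ball $B_R(x_0)\subset\subset\Omega$ I take a radial profile $w\in C^1_0(\overline{B_R(x_0)})$, extended by $0$ to $\Omega$, attaining its maximum $\gamma$ only at $x_0$, strictly decreasing in $|x-x_0|$ (so $\nabla w(x)\ne 0$ for $x\ne x_0$), with a sufficiently flat maximum at $x_0$ so that $-\Delta_p w$ stays bounded and as small as desired on $U_{\rm in}:=\{w>\gamma-\eta\}$; the outward extension by $0$ at $\partial B_R(x_0)$ creates only a corner of the sign that is harmless for subsolutions. On $U_{\rm out}:=\{0\le w\le\gamma-\eta\}$ one has $|\nabla w|\ge\kappa>0$, since this set sits in $\{|x-x_0|\ge r_1\}$.

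The verification of $-\Delta_p w+Lg(w)|\nabla w|^p\le\lambda f(w)$ then splits by region. On $U_{\rm in}$ we have $f(w)\ge m>0$, the gradient term is nonpositive (as $L<0$, $g>0$), and the profile has been arranged so that $-\Delta_p w\le\lambda m$, so the inequality holds for every $L<0$. On $U_{\rm out}$ the quantities $-\Delta_p w$ and $\lambda f(w)$ are bounded, while $Lg(w)|\nabla w|^p\le L\,(\min_{[0,\gamma]}g)\,\kappa^p\to-\infty$ as $L\to-\infty$; hence there is $L(\epsilon)\le L_\lambda$ for which the inequality holds throughout $U_{\rm out}$ whenever $L\le L(\epsilon)$. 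This makes $w$ a subsolution with $\|w\|_\co=\beta-\epsilon$, completing the reduction.

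I expect the only delicate point to be the neighbourhood of the maximum. There $\nabla w$ vanishes, so the regularizing term $Lg(w)|\nabla w|^p$ gives no help and one must instead secure $-\Delta_p w\le\lambda f(\gamma)$ pointwise; for $p<2$ a nondegenerate maximum makes $-\Delta_p w$ blow up at $x_0$, so the profile must have a sufficiently flat maximum (for instance $w=\gamma-C|x-x_0|^k$ with $k\ge p/(p-1)$) to keep $-\Delta_p w$ bounded and small uniformly across $p\in(1,\infty)$. Everywhere away from the maximum the sign of $L$ does all the work, which is precisely the quantitative form of the regularizing effect that drives the maxima up to $\beta$.
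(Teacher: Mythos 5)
Your proposal is correct in substance, but it proves the convergence part by a genuinely different mechanism than the paper. The paper's own proof is very short: it applies Proposition~\ref{prop:hess} twice --- once with $\gamma_1=\alpha$ to produce a solution $u_L$ with norm in $(\alpha,\beta]$, whence the maximal solution $\overline u_L$ exists by \cite[Theorem~4.2]{BMP} with $\beta$ as supersolution, and once with $\gamma_1=\beta-\varepsilon$ to produce, for $L$ negative enough, a solution with norm in $(\beta-\varepsilon,\beta]$, which $\overline u_L$ dominates by maximality. Proposition~\ref{prop:hess} is variational: it minimizes the transformed functional $I_{\lambda,L}$ and rests on the asymptotics of Lemma~\ref{lem:technical}. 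You keep the same skeleton (existence via Theorem~\ref{th:Lfrak}\,i), which is Proposition~\ref{prop:hess} in disguise, plus BMP), but you replace the second, crucial application of Proposition~\ref{prop:hess} by an explicit barrier: a radial, flat-capped subsolution $w$ with $\|w\|_\co=\beta-\epsilon$, which works once $L$ is negative enough, combined with the fact that the maximal solution dominates every subsolution in $[0,\beta]$. Your route is more elementary and self-contained --- it needs neither the change of variables nor Lemma~\ref{lem:technical} for this step, and the cap exponent $k\ge p/(p-1)$ and the $U_{\rm in}$/$U_{\rm out}$ splitting are exactly right. What it gives up is the uniformity in $\lambda$ built into Proposition~\ref{prop:hess}: your threshold $L(\epsilon)$ depends on the fixed $\lambda$ through the annulus estimate (the required $L$ behaves like $-(C_1+\lambda C_2)$), whereas the paper's threshold serves all $\lambda>\overline\lambda$ at once; since $\lambda$ is fixed in this theorem, that loss is immaterial here.

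Two details in your construction need patching. First, as written, $U_{\rm out}=\{0\le w\le\gamma-\eta\}$ contains all of $\Omega\setminus B_R(x_0)$, where $\nabla w\equiv 0$, so your claim $|\nabla w|\ge\kappa>0$ on $U_{\rm out}$ is false there; the exterior region must be treated separately, and there the subsolution inequality reduces to $0\le\lambda f(0)$. This is precisely where the hypothesis $f(0)\ge 0$ enters, and your sketch never invokes it: the favourable sign of the corner alone does not make the extension by zero a subsolution when $f(0)<0$. Second, the glued $w$ is Lipschitz but not $C^1(\overline\Omega)$ (corner at $\partial B_R(x_0)$), while the paper's definition of subsolution requires $C^1(\overline\Omega)$; this is harmless --- integration by parts over $B_R(x_0)$ produces a boundary term of favourable sign for nonnegative test functions, or one invokes the BMP framework, which accommodates $W^{1,p}(\Omega)\cap L^\infty(\Omega)$ subsolutions --- but it should be said explicitly. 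Finally, making $-\Delta_p w$ ``as small as desired'' on the cap while keeping the cap radius $r_1<R$ requires shrinking $\eta$ together with the cap constant (using that $\min_{[\gamma-\eta,\gamma]}f$ does not decrease as $\eta$ decreases); this tuning is implicit in your sketch and is easily supplied.
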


We point out that all our results can be suitably extended to the more general problem
\begin{equation}
\label{eq:PbExtension3}
\left\{ \begin{alignedat}{2}
-\operatorname{div} (a(u) |\nabla u|^{p-2} \nabla u) + g(u) |\nabla u|^p &= \lambda f(u) \quad &&\mbox{in} \;\; \Omega, \\
u &= 0 \quad &&\mbox{on} \;\; \partial\Omega,
\end{alignedat}
\right.
\end{equation}
where $a\in C^1(\R)$ is positive, under the area condition
\begin{equation}
\label{eq:ac_extension_3}
    \int_{s}^\beta f(\eta) a(\eta)^\frac{1}{p-1} e^{-\frac{p}{p-1}\int_0^\eta \frac{g(\sigma)}{a(\sigma)} \, \mathrm d\sigma} \ \mathrm d \eta > 0,\ \forall s\in [0,\beta).
\end{equation}
When $g=\frac{1}{p}a'$, the problem has a variational structure and~\eqref{eq:ac_extension_3} reduces to the usual area condition~\eqref{eq:area_cond_semi}. This is the case, for instance, of the Schr\"odinger operator (cf.~\cite{Schrod-1}). We further discuss this topic in Section~\ref{sec:equivalent}.

The plan of the paper is the following. In Section~\ref{sec:initial}, we present the approach adopted throughout the article and prove Theorem~\ref{th:existence}. By means of a detailed analysis of how the area condition varies with $L$ in~\eqref{eq:PbL}, in Section~\ref{sec:regularizing} we prove Theorem~\ref{th:Lfrak} and Theorem~\ref{th:conv_beta}. Finally, in Section~\ref{sec:further}, we extend our results to certain generalizations of~\eqref{eq:PbCV} and provide further insights. In particular, we study problem~\eqref{eq:PbExtension3}, which includes a more general divergence term, and we analyse~\eqref{eq:PbCV} in the case where $g(s)$ depends also on $x$. Moreover, we discuss some regularity conditions on $f$ which ensure that the solutions have norm different from~$\beta$. We also show that a regularizing effect similar to the one observed in~\eqref{eq:PbL} as $L\to -\infty$ can also be obtained in~\eqref{eq:PbCV} if $p\to 1^+$.

\section{An initial approach}
\label{sec:initial}

Throughout this work, we always deal with bounded solutions. In this setting, solutions to~\eqref{eq:PbCV} enjoy good regularity properties. The notion of solution we use is the following.

\begin{definition}
    A function $u\in C^1(\overline{\Omega})$ is a \textit{subsolution} (resp. a \textit{supersolution}) to~\eqref{eq:PbCV} if it verifies
    \begin{equation*}
    \into |\nabla u|^{p-2}\nabla u \nabla \varphi + \into g(u)|\nabla u|^p \varphi \overset{(\geq)}{\leq} \lambda\into f(u)\varphi,\ \forall \varphi\in \sob\cap \lio \text{ with } \varphi\geq 0,
    \end{equation*}
    and $u \leq 0$ (resp. $u\geq 0$) on $\partial\Omega$. In the same way, we say that $u\in \cuo$ is a \textit{solution} to~\eqref{eq:PbCV} if
    \begin{equation}
    \label{eq:def_sol}
        \into |\nabla u|^{p-2}\nabla u \nabla \varphi + \into g(u)|\nabla u|^p \varphi = \lambda\into f(u)\varphi,\ \forall \varphi\in \sob\cap \lio.
    \end{equation}
\end{definition}

\begin{remark}
    Thanks to the structure of problem~\eqref{eq:PbCV} and to the boundedness of the solutions, the usual weak solution concept, where solutions belong to the Sobolev space $\sob$, is equivalent to our definition. Indeed, any bounded $\sob$-solution to~\eqref{eq:PbCV} belongs to $C^{1,\mu}(\overline{\Omega})$, where $\mu\in (0,1)$ depends on the specific shape of~\eqref{eq:PbCV} (see~\cite{Lieb}).

    We stress that, in general, due to the degeneracy of the $p$-Laplacian operator, solutions are not expected to belong to $C^2(\Omega)$ even if $g\equiv 0$ (see~\cite{Sciunzi}). This is a major difference with respect to the usual Laplacian operator.
\end{remark}

In this section, our main aim is to study existence and nonexistence of solutions to problem~\eqref{eq:PbCV}. Our strategy is to transform~\eqref{eq:PbCV} into a semilinear problem via a suitable change of variables. To this end, let $\Psi\colon \R\to\R$ be the function given by
\begin{equation}
    \label{eq:def_Psi}
     \Psi(s) \coloneqq \int_0^s e^{-\frac{1}{p-1} G(\eta)} \ \mathrm{d}\eta, \ \forall s\in \R,
\end{equation}
where $G(s) \coloneq \int_0^s g(\eta)\ \mathrm{d}\eta$ for any $s\in\R$. We stress that $\Psi$ is an increasing $C^2$ function whose derivative is
\[
\Psi'(s)= e^{-\frac{1}{p-1} G(s)},\ \forall s\in\R.
\]
Its inverse, whose domain is $\operatorname{Dom}\big(\Psi^{-1} \big) = \big( \lim_{s\to -\infty} \Psi(s), \lim_{s\to\infty} \Psi(s) \big)$, is increasing and belongs to $C^2$. Indeed, its derivative is
\[
\left(\Psi^{-1}\right)'(s) = \frac{1}{\Psi'(\Psi^{-1}(s))} = e^{\frac{1}{p-1} G(\Psi^{-1}(s))}, \ \forall s\in \operatorname{Dom}\big(\Psi^{-1} \big).
\]
Observe that if $u$ is a solution to~\eqref{eq:PbCV}, then $v=\Psi(u)$ is formally a solution to the problem
\begin{equation}
\label{eq:PbSemiCV}
\left\{ \begin{alignedat}{2}
-\Delta_p v  &= \lambda \tilde f(v) \quad &&\mbox{in} \;\; \Omega, \\
v &= 0 \quad &&\mbox{on} \;\; \partial\Omega,
\end{alignedat}
\right.
\end{equation}
where $\tilde f\colon \operatorname{Dom}\big(\Psi^{-1} \big) \to \R$ is defined as
\begin{equation}
    \label{eq:def_tilde_f}
    \tilde{f}(s) \coloneqq f\left(\Psi^{-1}(s) \right) e^{-G(\Psi^{-1}(s))},\ \forall s\in \operatorname{Dom}\big(\Psi^{-1} \big).
\end{equation}
In the next result, we formalise this equivalence.

\begin{proposition}
\label{prop:equiv}
    Let $f, g\in C(\R)$. Then $u\in \cuo$ is a solution to~\eqref{eq:PbCV} if and only if $v=\Psi(u) \in \cuo$ is a solution to~\eqref{eq:PbSemiCV}.
\end{proposition}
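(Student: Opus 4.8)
The plan is to verify the change of variables $v = \Psi(u)$ directly at the level of the weak formulation~\eqref{eq:def_sol}, establishing the equivalence in both directions. Since $\Psi$ is an increasing $C^2$ diffeomorphism onto $\operatorname{Dom}(\Psi^{-1})$ with $\Psi(0)=0$, the map $u \mapsto \Psi(u)$ sends functions in $\cuo$ vanishing on $\partial\Omega$ to functions in $\cuo$ vanishing on $\partial\Omega$, and is invertible via $v \mapsto \Psi^{-1}(v)$. First I would record the pointwise chain rule: since $\Psi'(s) = e^{-\frac{1}{p-1}G(s)}$, we have $\nabla v = \Psi'(u)\nabla u = e^{-\frac{1}{p-1}G(u)}\nabla u$ almost everywhere, whence
\[
|\nabla v|^{p-2}\nabla v = e^{-G(u)}|\nabla u|^{p-2}\nabla u.
\]
This identity is the computational heart of the proof: the exponent $\frac{1}{p-1}$ in the definition of $\Psi$ is chosen precisely so that raising $\Psi'(u)$ to the power $p-1$ produces the factor $e^{-G(u)}$ that will absorb the gradient term.

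Next I would substitute this into the two formulations and reconcile the test functions. Suppose $u$ solves~\eqref{eq:PbCV}. Given a test function $\varphi \in \sob \cap \lio$ for problem~\eqref{eq:PbSemiCV}, I would use the same $\varphi$ in~\eqref{eq:def_sol} (the admissible test-function classes coincide, since $\Psi$ is bilipschitz on the relevant bounded range and thus preserves $\sob\cap\lio$). Using the identity above, the principal part of~\eqref{eq:PbSemiCV} becomes
\[
\into |\nabla v|^{p-2}\nabla v\,\nabla\varphi = \into e^{-G(u)}|\nabla u|^{p-2}\nabla u\,\nabla\varphi.
\]
The remaining task is to show that the right-hand side of~\eqref{eq:PbSemiCV}, namely $\lambda\into \tilde f(v)\varphi$, matches what~\eqref{eq:def_sol} produces after accounting for the gradient term. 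By the definition~\eqref{eq:def_tilde_f}, $\tilde f(v) = f(u)e^{-G(u)}$ since $u = \Psi^{-1}(v)$. The reconciliation hinges on the algebraic identity relating the two divergence structures; integrating by parts (formally, or by testing against a smooth approximation) one sees that the factor $e^{-G(u)}$ carried through the principal part generates exactly the natural growth term $\into g(u)|\nabla u|^p\varphi$ when differentiated, because $\nabla(e^{-G(u)}) = -g(u)e^{-G(u)}\nabla u$. Thus the weighted $p$-Laplacian of $v$ equals $e^{-G(u)}$ times the full left-hand operator of~\eqref{eq:PbCV} acting on $u$, and dividing by the common factor $e^{-G(u)}$ yields the equivalence term-by-term.

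\emph{The main obstacle} I anticipate is not the algebra but the rigorous justification at the weak level, since the natural growth term $g(u)|\nabla u|^p$ and the factor $e^{-G(u)}$ cannot be moved inside derivatives cavalierly for merely $C^1$ solutions. The cleanest route is to avoid integration by parts entirely: work directly with the two weak formulations, prove the pointwise identity $|\nabla v|^{p-2}\nabla v = e^{-G(u)}|\nabla u|^{p-2}\nabla u$ almost everywhere, and then verify that testing~\eqref{eq:def_sol} against $e^{-G(u)}\varphi$ (which is again admissible, being the product of a bounded $C^1$ function of $u$ with $\varphi\in\sob\cap\lio$) reproduces~\eqref{eq:PbSemiCV} tested against $\varphi$, using the product rule $\nabla(e^{-G(u)}\varphi) = e^{-G(u)}\nabla\varphi - g(u)e^{-G(u)}\varphi\,\nabla u$ to generate and cancel the gradient term. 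The converse direction is symmetric, running the same computation with $\Psi^{-1}$ in place of $\Psi$ and using that $(\Psi^{-1})'(v) = e^{\frac{1}{p-1}G(u)}$, so that the regularity $v\in\cuo \Leftrightarrow u\in\cuo$ follows from the $C^2$-smoothness of $\Psi$ and $\Psi^{-1}$ established above.
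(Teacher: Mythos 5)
Your proposal is correct and, in its final form, coincides with the paper's own argument: the paper proves the equivalence exactly by testing~\eqref{eq:def_sol} against $e^{-G(u)}\varphi$ (resp.~\eqref{eq:PbSemiCV} against $e^{G(u)}\varphi$ for the converse), letting the product rule cancel the natural growth term, and invoking the pointwise identity $|\nabla v|^{p-2}\nabla v = e^{-G(u)}|\nabla u|^{p-2}\nabla u$. The formal integration-by-parts heuristic in your middle paragraph is superseded by the ``cleanest route'' you describe at the end, which is precisely what the paper does.
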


\begin{remark}
    Since $\Psi$ is increasing and $\Psi(0)=0$, let us note that $u\geq 0$ if and only if $v\geq 0$.
\end{remark}

\begin{proof}
Let $u\in \cuo$ be a solution to~\eqref{eq:PbCV}. Given $\phi\in \sob\cap\lio$, we take $e^{-G(u)} \phi \in \sob\cap\lio$ as test function in~\eqref{eq:def_sol}. After cancelling terms, we obtain that
\begin{equation*}
    \into e^{-G(u)} |\nabla u|^{p-2} \nabla u \nabla \phi = \lambda \into f(u) e^{-G(u)} \phi, \ \forall \phi\in \sob\cap\lio.
\end{equation*}
If we define $v\coloneq \Psi(u)$, this can be rewritten as 
\begin{equation*}
    \into |\nabla v|^{p-2} \nabla v \nabla \phi = \lambda \into \tilde f(v) \phi, \ \forall \phi\in \sob\cap\lio.
\end{equation*}
Then, $v=\Psi(u)$ is a solution to~\eqref{eq:PbSemiCV}.

Reciprocally, let $v \in \cuo$ be a solution to~\eqref{eq:PbSemiCV} contained in $\operatorname{Im}(\Psi)$. To show that $u \coloneqq \Psi^{-1}(v)$ is a solution to~\eqref{eq:PbCV}, given $\varphi\in \sob\cap\lio$, we take $e^{G(u)} \varphi \in \sob\cap\lio$ as test function in~\eqref{eq:PbSemiCV}. Rewriting the integrals in terms of $u$, we deduce that
\begin{align*}
    \into e^{G(u)} |\nabla \Psi(u)|^{p-2} \nabla \Psi (u) \nabla \varphi
+ \into g(u) e^{G(u)} \nabla u |\nabla \Psi(u)|^{p-2}\nabla \Psi(u) \, \varphi &\\ =  \lambda \into \tilde f(\Psi(u)) e^{G(u)} \varphi, \ \forall \varphi\in \sob\cap\lio.
\end{align*}
Taking into account the definition of $\tilde f$ (see~\eqref{eq:def_tilde_f}) and using that $|\nabla \Psi(u)|^{p-2}\nabla \Psi(u) =  e^{-G(u)}|\nabla u|^{p-2}\nabla u$, we obtain that~\eqref{eq:def_sol} holds. In this way, we conclude that $u=\Psi^{-1}(v)$ is a solution to~\eqref{eq:PbCV}.
\end{proof}

In the following, we prove Theorem~\ref{th:existence}. The strategy is to obtain, using Proposition~\ref{prop:equiv}, the existence and nonexistence results for problem~\eqref{eq:PbCV} from those already known for problem~\eqref{eq:PbSemiCV}.

\begin{proof}[Proof of Theorem~\ref{th:existence}]
Under our assumptions on $f$ and $g$, the function $\tilde f$, defined in~\eqref{eq:def_tilde_f}, has two consecutive zeros, $\Psi(\alpha)$ and $\Psi(\beta)$, and is positive between them. Moreover, $f(0)\geq 0$ if and only if $\tilde f(0)\geq 0$.

Thanks to~\cite{Ng-Sch} (see also~\cite[Section~4]{Guo1}), we know that the existence of nonnegative solutions to~\eqref{eq:PbSemiCV} with $\co$-norm between $\Psi(\alpha)$ and $\Psi(\beta)$ is closely related to the area condition
\begin{equation}
    \label{eq:Pf_Th_ex_1}
    \int_{s}^{\Psi(\beta)} \tilde f(\eta) \ \mathrm d \eta>0,\ \forall s\in [0,\Psi(\beta)).
\end{equation}
Indeed, when~\eqref{eq:Pf_Th_ex_1} holds and $\tilde f(0)\geq 0$,~\cite[Theorem~1.1]{Ng-Sch} ensures the existence of some $\overline{\lambda}$ such that, for any $\lambda>\overline{\lambda}$, problem~\eqref{eq:PbSemiCV} has a nonnegative solution $v\in\cuo$ with $\|v\|_\co\in [\Psi(\alpha), \Psi(\beta)]$. In contrast, when~\eqref{eq:Pf_Th_ex_1} is not verified, problem~\eqref{eq:PbSemiCV} has no nonnegative solutions with maximum between $\Psi(\alpha)$ and $\Psi(\beta)$ for any $\lambda>0$ (see~\cite[Theorem~3.1]{Ng-Sch}).

Now, observe that the area condition~\eqref{eq:area_cond} imposed on $f$ is equivalent to~\eqref{eq:Pf_Th_ex_1}. In fact, performing the change of variables $\rho=\Psi^{-1}(\eta)$, one obtains
\begin{equation}
\label{eq:Pf_Th_ex_2}
\begin{split}
    \int_{s}^{\Psi(\beta)} \tilde f(\eta) \ \mathrm d \eta
    &= \int_{s}^{\Psi(\beta)} f\left(\Psi^{-1}(\eta) \right) e^{-G(\Psi^{-1}(\eta))} \ \mathrm{d}\eta = \int_{\Psi^{-1}(s)}^{\beta} f\left(\rho \right) \Psi'(\rho) e^{-G(\rho)} \ \mathrm{d}\rho\\
    &= \int_{\Psi^{-1}(s)}^{\beta} f\left(\rho \right)  e^{-{\frac{p}{p-1}} G(\rho)} \ \mathrm{d}\rho, \ \forall s\in \left[0, \Psi(\beta) \right).
\end{split}
\end{equation}

The proof ends just by applying Proposition~\ref{prop:equiv}.
\end{proof}

\begin{remark}
    Under certain regularity conditions on $f$, we can ensure that the maximum of any nonnegative solution to~\eqref{eq:PbCV} is different from $\beta$. To facilitate the reading, we leave this topic for Section~\ref{sec:smp}.
\end{remark}

\begin{remark}
    Imposing an additional assumption on the behaviour of $f$ at 0, it can be shown that every nonnegative (and nontrivial) solution to~\eqref{eq:PbCV} is positive. Concretely, this occurs whenever
    \begin{equation*}
        \liminf_{s\to 0^+} \frac{f(s)}{s^{p-1}} > -\infty.
    \end{equation*}
    This can be proved using the strong maximum principle (\cite{Vazquez}), following the same reasoning as in Section~\ref{sec:smp}.
\end{remark}

\begin{remark}
    When the existence conditions of Theorem~\ref{th:existence} hold, the existence of a maximal solution $\overline u_\lambda$ to~\eqref{eq:PbCV} in $[0,\beta]$ for each $\lambda >\overline{\lambda}$ is guaranteed because $\beta$ is a supersolution (see~\cite[Theorem~4.2]{BMP}). The variational arguments of~\cite[Lemma~2.1]{Cl-Sw} show that $\overline{v}_\lambda$, the maximal solution to~\eqref{eq:PbSemiCV} in $[0,\Psi(\beta)]$, verifies $\|\overline{v}_\lambda\|_\co \to \Psi(\beta)$ as $\lambda\to\infty$. Using Proposition~\ref{prop:equiv}, one deduces that $\|\overline{u}_\lambda\|_\co \to \beta$ as $\lambda\to\infty$.

    If $f$ is more regular and behaves well near the zeros, it is shown in~\cite[Theorem~2]{Cl-Sw} and~\cite[Theorem~A]{GuoWebb} that, for $\overline{v}_\lambda$, the convergence to $\Psi(\beta)$ is uniform in compact subsets of $\Omega$. Thanks to Proposition~\ref{prop:equiv}, it also holds that $\overline u_\lambda \to \beta$ uniformly in $K$ as $\lambda\to\infty$ for every compact set $K\subset \Omega$.
\end{remark}

\begin{remark}
    Under the existence conditions of Theorem~\ref{th:existence}, we can ensure the existence of two nonnegative solutions to~\eqref{eq:PbCV} with maximum in $(\alpha,\beta]$ for any $\lambda>\overline{\lambda}$. Again, this is a consequence of the multiplicity results known for~\eqref{eq:PbSemi} (see~\cite{Hess, Correa}), that also hold for~\eqref{eq:PbCV} thanks to Proposition~\ref{prop:equiv}.
\end{remark}

\section{Regularizing effect of the natural growth term}
\label{sec:regularizing}

Here, we study the effect of the interaction between the gradient term and $f$ on the existence and nonexistence of solutions. To this end, we introduce a parameter $L\in \R$ multiplying the gradient term, and we analyse how the solution set of problem~\eqref{eq:PbL}, i.e., of
\begin{equation}
\label{eq:PbL_Aux}
\left\{ \begin{alignedat}{2}
-\Delta_p u + L g(u) |\nabla u|^p &= \lambda f(u) \quad &&\mbox{in} \;\; \Omega, \\
u &= 0 \quad &&\mbox{on} \;\; \partial\Omega, 
\end{alignedat}
\right.
\end{equation}
varies as $L$ varies.

We divide this section into three parts. The first is more technical and concerns the behaviour of the solutions as $L\to -\infty$. In the second part, we present a useful stability result. Finally, we prove Theorem~\ref{th:Lfrak} and Theorem~\ref{th:conv_beta}.

\subsection{Asymptotic behaviour as \texorpdfstring{$L\to -\infty$}{L to -infinity}}

First of all, we prove an elementary lemma that will be useful for studying certain properties of the functions defined later in~\eqref{eq:def_Psi_L} and~\eqref{eq:def_hmin_hmax}.
\begin{lemma}
\label{lem:taylor}
    Let $G\in C^1(\R)$ be an increasing function and let $\gamma_1,\gamma_2\in \R$ be such that $\gamma_1<\gamma_2$. Then, one has
    \[
    \lim_{L\to -\infty} \left(\frac{e^{-LG(\gamma_2)}}{-LG'(\gamma_2)} \right)^{-1} \int_{\gamma_1}^{\gamma_2} e^{-LG(\eta)}\ \mathrm{d}\eta = 1.
    \]
\end{lemma}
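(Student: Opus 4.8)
The plan is to recognize the quantity as a Laplace-type integral whose mass concentrates at the right endpoint $\gamma_2$, and to pin down the endpoint asymptotics by an elementary sandwich argument. First I would substitute $t\coloneqq -L$, so that $t\to+\infty$, and set
\[
R(t)\coloneqq t\,G'(\gamma_2)\int_{\gamma_1}^{\gamma_2} e^{-t\left(G(\gamma_2)-G(\eta)\right)}\,\mathrm{d}\eta,
\]
after which the claim reads simply $\lim_{t\to+\infty}R(t)=1$. Note that the statement implicitly requires $G'(\gamma_2)\neq 0$; since $G$ is increasing we have $G'(\gamma_2)\geq 0$, and in our applications $G'=g>0$, so I take $G'(\gamma_2)>0$ throughout. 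Because $G$ is increasing, the exponent $G(\gamma_2)-G(\eta)$ is nonnegative on $[\gamma_1,\gamma_2]$ and vanishes only at $\eta=\gamma_2$, which is what makes the integral concentrate there.

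For the key estimate I would fix $\varepsilon\in(0,G'(\gamma_2))$ and use the continuity of $G'$ to choose $\delta>0$ with $G'(\eta)\in\left(G'(\gamma_2)-\varepsilon,\,G'(\gamma_2)+\varepsilon\right)$ for all $\eta\in[\gamma_2-\delta,\gamma_2]$. By the mean value theorem this gives, on that interval,
\[
\left(G'(\gamma_2)-\varepsilon\right)(\gamma_2-\eta)\leq G(\gamma_2)-G(\eta)\leq \left(G'(\gamma_2)+\varepsilon\right)(\gamma_2-\eta).
\]
I would then split the integral defining $R(t)$ at $\gamma_2-\delta$. On the main piece $[\gamma_2-\delta,\gamma_2]$ the two bounds produce explicit exponential integrals; using the substitution $u=\gamma_2-\eta$ and $\int_0^\delta e^{-tau}\,\mathrm{d}u=\frac{1-e^{-ta\delta}}{ta}$ with $a=G'(\gamma_2)\pm\varepsilon$, and multiplying by the prefactor $t\,G'(\gamma_2)$, the contribution tends as $t\to\infty$ to $\frac{G'(\gamma_2)}{G'(\gamma_2)\pm\varepsilon}$. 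On the tail $[\gamma_1,\gamma_2-\delta]$, monotonicity of $G$ yields $G(\gamma_2)-G(\eta)\geq c_\delta\coloneqq G(\gamma_2)-G(\gamma_2-\delta)>0$ (strictly positive since $G'>0$ near $\gamma_2$), so the tail is bounded by $(\gamma_2-\delta-\gamma_1)e^{-tc_\delta}$ and, after multiplication by $t\,G'(\gamma_2)$, vanishes in the limit.

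Combining these bounds gives
\[
\frac{G'(\gamma_2)}{G'(\gamma_2)+\varepsilon}\leq \liminf_{t\to\infty} R(t)\leq\limsup_{t\to\infty} R(t)\leq \frac{G'(\gamma_2)}{G'(\gamma_2)-\varepsilon},
\]
and letting $\varepsilon\to0^+$ forces $\lim_{t\to\infty}R(t)=1$, which is the assertion. The main (indeed only) subtle point is that the normalizing factor grows linearly in $t$, so I must check that it still annihilates the tail: this is exactly where one needs the strict gap $c_\delta>0$ (guaranteeing exponential tail decay) together with the fact that the concentration rate at $\gamma_2$ equals precisely $G'(\gamma_2)$, which is what makes $-L\,G'(\gamma_2)=t\,G'(\gamma_2)$ the correct normalization. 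Everything else is routine bookkeeping with the explicit exponential integrals.
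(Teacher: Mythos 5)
Your proof is correct and follows essentially the same route as the paper: split the integral at $\gamma_2-\delta$, show the tail is exponentially negligible against the normalizer, sandwich the exponent near $\gamma_2$ between $(G'(\gamma_2)\pm\varepsilon)(\gamma_2-\eta)$, compute the resulting exponential integrals explicitly, and let $\varepsilon\to 0^+$. The only cosmetic differences are that you obtain the linear sandwich via the mean value theorem and continuity of $G'$ where the paper invokes Taylor's theorem with Peano remainder, and that you make explicit the hypothesis $G'(\gamma_2)>0$, which the paper leaves implicit (it is automatic in its application, where $G'=g>0$).
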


\begin{proof}
Throughout the proof, we assume $L<0$. First, we use Taylor's Theorem to write
\begin{equation}
    \label{eq:L_taylor_1}
    G(s) = G(\gamma_2) - G'(\gamma_2) (\gamma_2-s) + o(\gamma_2-s), \ \forall s\in [\gamma_1,\gamma_2],
\end{equation}
where $o(\gamma_2-s)$ is such that $\frac{o(\gamma_2-s)}{\gamma_2-s} \to 0$ as $s\to \gamma_2$. Therefore, given $\varepsilon\in (0, G'(\gamma_2))$, there exists $\delta=\delta(\varepsilon)>0$ small such that
\begin{equation}
    \label{eq:L_taylor_2}
    \left|\frac{o(\gamma_2-s)}{\gamma_2-s}\right|<\varepsilon,\ \forall s\in (\gamma_2-\delta, \gamma_2).
\end{equation}
On the one hand, using the monotonicity of $G$, we deduce that 
\[
0 \leq \int_{\gamma_1}^{\gamma_2-\delta} e^{-LG(\eta)}\ \mathrm{d}\eta \leq (\gamma_2-\delta-\gamma_1) e^{-LG(\gamma_2-\delta)},\ \forall L<0.
\]
Since $G(\gamma_2-\delta)< G(\gamma_2)$, then $\lim_{L\to -\infty} L e^{L(G(\gamma_2)-G(\gamma_2-\delta))} = 0$. Hence, we obtain that
\begin{equation}
    \label{eq:L_taylor_3}
    \lim_{L\to -\infty} \left(\frac{e^{-LG(\gamma_2)}}{-LG'(\gamma_2)} \right)^{-1} \int_{\gamma_1}^{\gamma_2-\delta} e^{-LG(\eta)}\ \mathrm{d}\eta = 0.
\end{equation}
On the other hand, taking~\eqref{eq:L_taylor_2} into account, we have that $e^{-L(-\varepsilon)(\gamma_2-s)}  \leq e^{-Lo(\gamma_2-s)} \leq e^{-L\varepsilon(\gamma_2-s)}$ when $s\in (\gamma_2-\delta, \gamma_2)$ and, using~\eqref{eq:L_taylor_1}, we deduce
\begin{equation}
    \label{eq:L_taylor_4}
    e^{-LG(\gamma_2)} \int_{\gamma_2-\delta}^{\gamma_2} e^{L(G'(\gamma_2)+\varepsilon)(\gamma_2-\eta)}\ \mathrm{d}\eta \leq \int_{\gamma_2-\delta}^{\gamma_2} e^{-LG(\eta)}\ \mathrm{d}\eta \leq e^{-LG(\gamma_2)} \int_{\gamma_2-\delta}^{\gamma_2} e^{L(G'(\gamma_2)-\varepsilon)(\gamma_2-\eta)}\ \mathrm{d}\eta.
\end{equation}
Calculating explicitly the first and the last integral in~\eqref{eq:L_taylor_4}, we obtain
\[
e^{-LG(\gamma_2)} \frac{1-e^{L(G'(\gamma_2)+\varepsilon)\delta}}{-L(G'(\gamma_2)+\varepsilon)} 
\leq \int_{\gamma_2-\delta}^{\gamma_2} e^{-LG(\eta)}\ \mathrm{d}\eta 
\leq e^{-LG(\gamma_2)} \frac{1-e^{L(G'(\gamma_2)-\varepsilon)\delta}}{-L(G'(\gamma_2)-\varepsilon)}. 
\]
In this way, since $G'(\gamma_2)-\varepsilon>0$, we deduce that
\begin{equation}
    \label{eq:L_taylor_5}
    \frac{G'(\gamma_2)}{G'(\gamma_2)+\varepsilon} \leq \liminf_{L\to -\infty} \left(\frac{e^{-LG(\gamma_2)}}{-LG'(\gamma_2)} \right)^{-1} \int_{\gamma_2-\delta}^{\gamma_2} e^{-LG(\eta)}\ \mathrm{d}\eta \leq \limsup_{L\to -\infty} \left(\frac{e^{-LG(\gamma_2)}}{-LG'(\gamma_2)} \right)^{-1} \int_{\gamma_2-\delta}^{\gamma_2} e^{-LG(\eta)}\ \mathrm{d}\eta \leq \frac{G'(\gamma_2)}{G'(\gamma_2)-\varepsilon}.
\end{equation}
Joining~\eqref{eq:L_taylor_3} and~\eqref{eq:L_taylor_5}, we obtain that
\[
\frac{G'(\gamma_2)}{G'(\gamma_2)+\varepsilon} \leq \liminf_{L\to -\infty} \left(\frac{e^{-LG(\gamma_2)}}{-LG'(\gamma_2)} \right)^{-1} \int_{\gamma_1}^{\gamma_2} e^{-LG(\eta)}\ \mathrm{d}\eta \leq \limsup_{L\to -\infty} \left(\frac{e^{-LG(\gamma_2)}}{-LG'(\gamma_2)} \right)^{-1} \int_{\gamma_1}^{\gamma_2} e^{-LG(\eta)}\ \mathrm{d}\eta \leq \frac{G'(\gamma_2)}{G'(\gamma_2)-\varepsilon}.
\]
Our claim follows immediately by tending $\varepsilon$ to 0.
\end{proof}

Now, we introduce some notation. To make explicit the dependence on $L$, for any $L\in \R$, we define, as in~\eqref{eq:def_Psi}, the function
\begin{equation}
\label{eq:def_Psi_L}
     \Psi_L(s) \coloneqq \int_0^s e^{-\frac{1}{p-1} L G(\eta)} \ \mathrm{d}\eta, \ \forall s\in \R,
\end{equation}
where $G(s)= \int_0^s g(\eta)\ \mathrm{d}\eta$ for any $s\in \R$. We point out that $G$ is increasing when $g$ is positive. In relation to the area condition~\eqref{eq:area_cond}, for any $\gamma_1,\gamma_2 \in [\alpha, \beta]$ with $\gamma_1<\gamma_2$, we define the continuous function
\begin{equation}
    \label{eq:def_Hgamma}
    H_{\gamma_1,\gamma_2}(s,L) \coloneqq \int_{s}^{\gamma_2} f(\eta) e^{-\frac{p}{p-1}L G(\eta)} \ \mathrm d \eta, \ \forall (s,L) \in [0,\gamma_1]\times \R,
\end{equation}
and we set, for any $L\in\R$,
\begin{equation}
\label{eq:def_hmin_hmax}
    \uhgamma \coloneqq \min_{s\in [0,\gamma_1]} H_{\gamma_1,\gamma_2} (s,L) \qquad \text{and} \qquad \ohgamma \coloneqq \max_{s\in [0,\gamma_1]} H_{\gamma_1,\gamma_2} (s,L).
\end{equation}

The next result collects some asymptotic properties of these functions when $L\to -\infty$.

\begin{lemma}
\label{lem:technical}
    Let $f\in C(\R)$ satisfy $f(s)>0$ for $s\in (\alpha,\beta)$, where $0<\alpha<\beta$ are two zeros of $f$, and let $g\in C(\R)$ be positive. Let $\gamma_1,\gamma_2 \in [\alpha,\beta]$ be such that $\gamma_1< \gamma_2$, and let $\Psi_L(s)$, $\uhgamma$ and $\ohgamma$ be the functions defined in~\eqref{eq:def_Psi_L} and~\eqref{eq:def_hmin_hmax}. Then it holds:
    \begin{enumerate}[i)]
        \item If $\gamma_2<\beta$, there is some $L_0<0$ and some $C>0$ such that
        \[
        \uhgamma \geq C\frac{e^{-\frac{p}{p-1}LG(\gamma_2)}}{-L}, \ \forall L<L_0.
        \]
        If $\gamma_2=\beta$, then for any $\varepsilon>0$ small there is some $L_0=L_0(\varepsilon)<0$ and some $C=C(\varepsilon)>0$ such that
        \[
        \underline{h}_{\gamma_1,\beta} (L) \geq C\frac{e^{{-\frac{p}{p-1}LG(\beta-\varepsilon)}}}{-L}, \ \forall L<L_0.
        \]
        \item Both $\uhgamma$ and $\ohgamma$ grow at the same rate as $L\to -\infty$, i.e.,
        \[
        \lim_{L\to -\infty} \frac{\ohgamma}{\uhgamma} = 1.
        \]
        \item If $\gamma_2<\beta$, then
        \[
        \lim_{L\to -\infty} \frac{\Psi_L(\gamma_2)^p}{\uhgamma} = 0.
        \]
    \end{enumerate}
\end{lemma}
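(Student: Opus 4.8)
The plan is to treat all three items as applications of the Laplace-type asymptotic supplied by Lemma~\ref{lem:taylor}: since $L<0$ and $G$ is strictly increasing (because $g>0$, so $G'=g>0$ and the hypothesis of Lemma~\ref{lem:taylor} is met), each weight $e^{-\frac{p}{p-1}LG(\eta)}$ and $e^{-\frac{1}{p-1}LG(\eta)}$ concentrates near the right endpoint of its interval of integration. Hence the dominant contribution always comes from a neighborhood of $\gamma_2$, and all three estimates reduce to comparing exponential rates of the form $e^{-\frac{p}{p-1}LG(\cdot)}$.

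For item i), I would split $H_{\gamma_1,\gamma_2}(s,L)=\int_s^{\gamma_1}+\int_{\gamma_1}^{\gamma_2}$. The first piece is bounded uniformly in $s\in[0,\gamma_1]$ by $\big(\max_{[0,\gamma_1]}|f|\big)\,\gamma_1\,e^{-\frac{p}{p-1}LG(\gamma_1)}$, which, since $G(\gamma_1)<G(\gamma_2)$, is exponentially negligible against the target rate $e^{-\frac{p}{p-1}LG(\gamma_2)}/(-L)$. For the second piece I use that $f\geq 0$ on $[\alpha,\beta]\supseteq[\gamma_1,\gamma_2]$, so it suffices to bound it below on a small interval attached to the right end. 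When $\gamma_2<\beta$ one has $f(\gamma_2)>0$, hence $f\geq f(\gamma_2)/2$ on $[\gamma_2-\delta,\gamma_2]$, and Lemma~\ref{lem:taylor} (with $\tilde L=\frac{p}{p-1}L$) gives $\int_{\gamma_2-\delta}^{\gamma_2}e^{-\frac{p}{p-1}LG}\sim e^{-\frac{p}{p-1}LG(\gamma_2)}/(-\tfrac{p}{p-1}LG'(\gamma_2))$, yielding the claimed bound with $G'(\gamma_2)=g(\gamma_2)>0$. When $\gamma_2=\beta$, the vanishing $f(\beta)=0$ forces me to anchor at an interior point: choosing $\varepsilon$ small so $\beta-\varepsilon\in(\alpha,\beta)$, I bound the integral below on $[\beta-\varepsilon-\delta',\beta-\varepsilon]$, where $f$ is positive and bounded below, and apply Lemma~\ref{lem:taylor} to obtain the rate $e^{-\frac{p}{p-1}LG(\beta-\varepsilon)}/(-L)$ with $\varepsilon$-dependent constants. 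Taking the minimum over $s$ preserves these lower bounds uniformly.

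For item ii), I would write $\overline h-\underline h=H(s_{\max},L)-H(s_{\min},L)=\pm\int_{s_{\min}}^{s_{\max}}f\,e^{-\frac{p}{p-1}LG}$, so that $|\overline h-\underline h|\leq\big(\max_{[0,\gamma_1]}|f|\big)\gamma_1 e^{-\frac{p}{p-1}LG(\gamma_1)}$. Dividing by the item-i) lower bound and using $G(\gamma_1)<G(\gamma_2)$ (taking, when $\gamma_2=\beta$, $\varepsilon<\beta-\gamma_1$ so that $G(\gamma_1)<G(\beta-\varepsilon)$) sends $\overline h/\underline h-1$ to $0$. For item iii), Lemma~\ref{lem:taylor} with $\tilde L=\frac{1}{p-1}L$ gives $\Psi_L(\gamma_2)\sim e^{-\frac{1}{p-1}LG(\gamma_2)}/(-\tfrac{1}{p-1}LG'(\gamma_2))$, so $\Psi_L(\gamma_2)^p$ carries exactly the exponential $e^{-\frac{p}{p-1}LG(\gamma_2)}$ together with a factor $(-L)^{-p}$. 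The crucial point is that this exponential matches the one in the item-i) lower bound for $\underline h$ (which is why I need $\gamma_2<\beta$ here), so the exponentials cancel and the ratio is $O\big((-L)^{1-p}\big)\to0$ since $p>1$.

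The main obstacle I anticipate is twofold, and both features are already visible in the statement. First, the degenerate endpoint case $\gamma_2=\beta$: the vanishing of $f$ at $\beta$ prevents anchoring Laplace's method at the endpoint and forces the $\varepsilon$-dependent lower bound. Second, and more structurally, I must track the exponential rates with complete precision — the bound in item i) has to carry the sharp rate $e^{-\frac{p}{p-1}LG(\gamma_2)}$ (not a weaker rate governed by $G(\gamma_1)$), because it is exactly this matching of exponentials that produces the clean cancellations in items ii) and iii); any slack here would leave an uncontrolled exponential factor and the two limits would fail.
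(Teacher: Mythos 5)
Your proposal is correct. For items i) and iii) it is essentially the paper's own proof: the same splitting of $H_{\gamma_1,\gamma_2}(s,L)$ at $\gamma_1$, the same crude bound at exponential rate $e^{-\frac{p}{p-1}LG(\gamma_1)}$ for the portion over $[0,\gamma_1]$, Laplace asymptotics from Lemma~\ref{lem:taylor} anchored near $\gamma_2$ (the paper anchors on $[\gamma_1+\varepsilon_0,\gamma_2]$ using $\min_{[\gamma_1+\varepsilon_0,\gamma_2]}f>0$, you anchor on $[\gamma_2-\delta,\gamma_2]$ using $f\geq f(\gamma_2)/2$ --- a cosmetic difference), the same reduction of the case $\gamma_2=\beta$ to an interior point $\beta-\varepsilon$ (the paper phrases it as $\underline{h}_{\gamma_1,\beta}(L)\geq \underline{h}_{\gamma_1,\beta-\varepsilon}(L)$), and the identical $(-L)^{1-p}\to 0$ cancellation in iii). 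The one place where you genuinely depart from the paper is item ii): the paper fixes the maximizer $s_L$ of $H_{\gamma_1,\gamma_2}(\cdot,L)$, splits $\overline{h}_{\gamma_1,\gamma_2}(L)=\int_{s_L}^{\gamma_1}+\int_{\gamma_1}^{\gamma_2}$, and runs two separate sandwich arguments showing the first ratio tends to $0$ and the second to $1$; you instead note that $\overline{h}_{\gamma_1,\gamma_2}(L)-\underline{h}_{\gamma_1,\gamma_2}(L)$ is (up to sign) an integral of $f\,e^{-\frac{p}{p-1}LG}$ over a subinterval of $[0,\gamma_1]$, hence bounded by $\gamma_1\bigl(\max_{[0,\gamma_1]}|f|\bigr)e^{-\frac{p}{p-1}LG(\gamma_1)}$, and divide once by the item-i) lower bound. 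This is a tidier route to the same limit --- one uniform estimate instead of two sandwiches --- and it correctly isolates the only requirement, namely that the item-i) rate $G(\gamma_2-\varepsilon)$ strictly exceeds $G(\gamma_1)$, which your constraint $\varepsilon<\gamma_2-\gamma_1$ guarantees. Two small points to make explicit in a write-up: $f(\gamma_2)>0$ in the case $\gamma_2<\beta$ needs the observation that $\gamma_2>\gamma_1\geq\alpha$ forces $\gamma_2\in(\alpha,\beta)$; and in the case $\gamma_2=\beta$ the anchoring interval $[\beta-\varepsilon-\delta',\beta-\varepsilon]$ must be kept inside $[\gamma_1,\beta]\cap(\alpha,\beta)$, i.e.\ $\varepsilon+\delta'<\beta-\gamma_1$.
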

\begin{proof}
    During the whole proof, we assume $L<0$. Each item is proved as follows:
    \begin{enumerate}[i)]
    \item Assume first that $\gamma_2<\beta$. Let $\varepsilon_0\geq 0$ be such that $\gamma_1+\varepsilon_0\in (\alpha,\gamma_2)$. Taking into account that $\min_{t\in [0,\gamma_1]} f(t)\leq 0$ and that $\min_{t\in [\gamma_1+\varepsilon_0,\gamma_2]} f(t)> 0$, we deduce for any $s\in[0,\gamma_1]$ and any $L<0$ that
    \begin{equation}
        \label{eq:Pf_L_tec_1}
        \begin{split}
        H_{\gamma_1,\gamma_2}(s,L) =& \int_{s}^{\gamma_1} f(\eta) e^{-\frac{p}{p-1}L G(\eta)} \ \mathrm d \eta + \int_{\gamma_1}^{\gamma_2} f(\eta) e^{-\frac{p}{p-1}L G(\eta)} \ \mathrm d \eta\\
        \geq& (\gamma_1-s) e^{-\frac{p}{p-1}L G(\gamma_1)} \min_{t\in [0,\gamma_1]} f(t) +\int_{\gamma_1+\varepsilon_0}^{\gamma_2} f(\eta) e^{-\frac{p}{p-1}L G(\eta)} \ \mathrm d \eta \\
        \geq& \gamma_1 e^{-\frac{p}{p-1}L G(\gamma_1)} \min_{t\in [0,\gamma_1]} f(t) + \min_{t\in [\gamma_1+\varepsilon_0,\gamma_2]} f(t) \int_{\gamma_1+\varepsilon_0}^{\gamma_2} e^{-\frac{p}{p-1}L G(\eta)} \ \mathrm d \eta \\
        =& \frac{e^{-\frac{p}{p-1}LG(\gamma_2)}}{-L} \left( -\gamma_1 L e^{\frac{p}{p-1}L(G(\gamma_2) - G(\gamma_1))} \min_{t\in [0,\gamma_1]} f(t) 
        \vphantom{\left( \frac{e^{-\frac{p}{p-1}LG(\gamma)}}{-L} \right)^{-1}} \right.\\
        & + \left.\left( \frac{e^{-\frac{p}{p-1}LG(\gamma_2)}}{-L} \right)^{-1} \min_{t\in [\gamma_1+\varepsilon_0,\gamma_2]} f(t) \int_{\gamma_1+\varepsilon_0}^{\gamma_2} e^{-\frac{p}{p-1}L G(\eta)} \ \mathrm d \eta \right).
        \end{split}
    \end{equation}
    Therefore, for any $L<0$ we obtain that
    \begin{equation*}
    \begin{split}
        \uhgamma \geq& \frac{e^{-\frac{p}{p-1}LG(\gamma_2)}}{-L} \left( -\gamma_1 L e^{\frac{p}{p-1}L(G(\gamma_2) - G(\gamma_1))} \min_{t\in [0,\gamma_1]} f(t) 
        \vphantom{\left( \frac{e^{-\frac{p}{p-1}LG(\gamma)}}{-L} \right)^{-1}} \right.\\
        & + \left.\left( \frac{e^{-\frac{p}{p-1}LG(\gamma_2)}}{-L} \right)^{-1} \min_{t\in [\gamma_1+\varepsilon_0,\gamma_2]} f(t) \int_{\gamma_1+\varepsilon_0}^{\gamma_2} e^{-\frac{p}{p-1}L G(\eta)} \ \mathrm d \eta \right).
    \end{split}
    \end{equation*}
    Using Lemma~\ref{lem:taylor} and that $G(\gamma_1) < G(\gamma_2)$, we deduce our claim because
    \begin{equation*}
    \begin{split}
    \lim_{L\to-\infty} & \left( -\gamma_1 L e^{\frac{p}{p-1}L(G(\gamma_2) - G(\gamma_1))} \min_{t\in [0,\gamma_1]} f(t) + \left( \frac{e^{-\frac{p}{p-1}LG(\gamma_2)}}{-L} \right)^{-1} \min_{t\in [\gamma_1+\varepsilon_0,\gamma_2]} f(t) \int_{\gamma_1+\varepsilon_0}^{\gamma_2} e^{-\frac{p}{p-1}L G(\eta)} \ \mathrm d \eta \right)\\
    &= \frac{p-1}{p G'(\gamma_2)}\min_{t\in [\gamma_1+\varepsilon_0,\gamma_2]} f(t) >0.
    \end{split}
    \end{equation*}
    When $\gamma_2=\beta$, it suffices to observe that for any $\varepsilon\in (0,\beta-\gamma_1)$, we have
    \[
    \underline{h}_{\gamma_1,\beta}(L) \geq \underline{h}_{\gamma_1,\beta-\varepsilon}(L),\ \forall L\in \R,
    \]
    and then the result for the case $\gamma_2<\beta$ can be applied with $\gamma_2=\beta-\varepsilon$ to conclude our claim.

    \item To unify the proof for the cases $\gamma_2<\beta$ and $\gamma_2=\beta$ observe that, thanks to item i) and to the fact that $G$ is increasing, given $\varepsilon>0$ with $\gamma_2-\varepsilon\in (\gamma_1,\beta)$, there is $L_0<0$ and $C>0$ such that
    \begin{equation}
    \label{eq:Pf_L_tec_2}
        \uhgamma \geq C\frac{e^{-\frac{p}{p-1}LG(\gamma_2-\varepsilon)}}{-L}, \ \forall L<L_0.
    \end{equation}    
    We point out that, for this range of $L$, $\uhgamma$ is positive. For any $L\in \R$, we define $s_L\in [0,\gamma_1]$ as the point where $H_{\gamma_1,\gamma_2}(s,L)$ achieves its maximum in $s$. Then, it holds
    \begin{equation}
    \label{eq:Pf_L_tec_3}
        \ohgamma = \max_{s\in [0,\gamma_1]} H_{\gamma_1,\gamma_2} (s,L) = H_{\gamma_1, \gamma_2}(s_L,L) = \int_{s_L}^{\gamma_1} f(\eta) e^{-\frac{p}{p-1}L G(\eta)} \ \mathrm d \eta + \int_{\gamma_1}^{\gamma_2} f(\eta) e^{-\frac{p}{p-1}L G(\eta)} \ \mathrm d \eta.
    \end{equation}    
    On the one hand, using~\eqref{eq:Pf_L_tec_2} and that $\max_{t\in [0,\gamma_1]} f(t) \geq 0$, we have that
    \begin{equation*}
    \begin{split}
    0 &\leq \frac{\int_{s_L}^{\gamma_1} f(\eta) e^{-\frac{p}{p-1}L G(\eta)} \ \mathrm d \eta}{\uhgamma} 
    \leq \frac{\gamma_1 e^{-\frac{p}{p-1}L G(\gamma_1)} \max_{t\in [0,\gamma_1]} f(t)}{\uhgamma}\\
    &\leq \frac{\gamma_1 \max_{t\in [0,\gamma_1]} f(t)}{C} \frac{-L}{e^{-\frac{p}{p-1} L(G(\gamma_2-\varepsilon)-G(\gamma_1))}},\ \forall L<L_0.
    \end{split}
    \end{equation*}
    As $G(\gamma_2-\varepsilon)>G(\gamma_1)$, we immediately obtain that
    \begin{equation}
    \label{eq:Pf_L_tec_4}
        \lim_{L\to -\infty} \frac{\int_{s_L}^{\gamma_1} f(\eta) e^{-\frac{p}{p-1}L G(\eta)} \ \mathrm d \eta}{\uhgamma} = 0.
    \end{equation}
    On the other hand, reasoning as in~\eqref{eq:Pf_L_tec_1}, we deduce that
    \[
    \uhgamma \geq  \gamma_1 e^{-\frac{p}{p-1}L G(\gamma_1)} \min_{t\in [0,\gamma_1]} f(t) + \int_{\gamma_1}^{\gamma_2} f(\eta) e^{-\frac{p}{p-1}L G(\eta)} \ \mathrm d \eta >0, \ \forall L<L_0,
    \]
    and then
    \begin{equation}
    \label{eq:Pf_L_tec_5}
    \begin{split}
        1 &\leq \frac{\int_{\gamma_1}^{\gamma_2} f(\eta) e^{-\frac{p}{p-1}L G(\eta)} \ \mathrm d \eta}{\uhgamma} \\
        &\leq \frac{\int_{\gamma_1}^{\gamma_2} f(\eta) e^{-\frac{p}{p-1}L G(\eta)} \ \mathrm d \eta}{\gamma_1 e^{-\frac{p}{p-1}L G(\gamma_1)} \min_{t\in [0,\gamma_1]} f(t) + \int_{\gamma_1}^{\gamma_2} f(\eta) e^{-\frac{p}{p-1}L G(\eta)} \ \mathrm d \eta}\\[2mm]
        &= \frac{1}{\frac{\gamma_1 e^{-\frac{p}{p-1}L G(\gamma_1)} \min_{t\in [0,\gamma_1]} f(t)}{\int_{\gamma_1}^{\gamma_2} f(\eta) e^{-\frac{p}{p-1}L G(\eta)} \ \mathrm d \eta} + 1},\ \forall L<L_0.
    \end{split}
    \end{equation} 
    Now, given $\varepsilon_0>0$ such that $\gamma_1+\varepsilon_0\in (\alpha,\beta)$ and $\gamma_1+\varepsilon_0<\gamma_2-\varepsilon$, observe that, since $\min_{t\in [0,\gamma_1]} f(t)\leq 0$ and $\min_{t\in [\gamma_1+\varepsilon_0, \gamma_2-\varepsilon]} f(t)>0$, one has
    \begin{align*}
        0&\geq \frac{\gamma_1 e^{-\frac{p}{p-1}L G(\gamma_1)} \min_{t\in [0,\gamma_1]} f(t)}{\int_{\gamma_1}^{\gamma_2} f(\eta) e^{-\frac{p}{p-1}L G(\eta)} \ \mathrm d \eta}\\
        &\geq \frac{\gamma_1 e^{-\frac{p}{p-1}L G(\gamma_1)} \min_{t\in [0,\gamma_1]} f(t)}{(\gamma_2-\varepsilon-\gamma_1-\varepsilon_0)e^{-\frac{p}{p-1}LG(\gamma_1+\varepsilon_0)} \min_{t\in [\gamma_1+\varepsilon_0, \gamma_2-\varepsilon]} f(t)} \\
        &= \frac{\gamma_1 \min_{t\in [0,\gamma_1]} f(t)}{(\gamma_2-\varepsilon-\gamma_1-\varepsilon_0)\min_{t\in [\gamma_1+\varepsilon_0, \gamma_2-\varepsilon]} f(t)} \, e^{\frac{p}{p-1} L(G(\gamma_1 + \varepsilon_0) - G(\gamma_1))}, \ \forall L<L_0.
    \end{align*}
    Since $G(\gamma_1+\varepsilon_0)>G(\gamma_1)$, we deduce that
    \[
    \lim_{L\to -\infty} \frac{\gamma_1 e^{-\frac{p}{p-1}L G(\gamma_1)} \min_{t\in [0,\gamma_1]} f(t)}{\int_{\gamma_1}^{\gamma_2} f(\eta) e^{-\frac{p}{p-1}L G(\eta)} \ \mathrm d \eta} = 0.
    \]
    Therefore, we can take limits in~\eqref{eq:Pf_L_tec_5} to obtain that
    \begin{equation}
    \label{eq:Pf_L_tec_6}
        \lim_{L\to-\infty} \frac{\int_{\gamma_1}^{\gamma_2} f(\eta) e^{-\frac{p}{p-1}L G(\eta)} \ \mathrm d \eta}{\uhgamma} = 1.
    \end{equation}
    Joining~\eqref{eq:Pf_L_tec_3},~\eqref{eq:Pf_L_tec_4} and~\eqref{eq:Pf_L_tec_6} we deduce our claim.

    \item By Lemma~\ref{lem:taylor}, there is some $C_1>0$ and some $L_1<0$ such that
    \[
    \Psi_L(\gamma_2) \leq \frac{C_1(p-1)}{G'(\gamma_2)} \frac{e^{-\frac{1}{p-1}LG(\gamma_2)}}{-L} ,\ \forall L<L_1.
    \]
    In this way, using item i), we deduce the existence of some $L_2<0$ and some $C_2>0$ such that
    \[
    0\leq \frac{\Psi_L(\gamma_2)^p}{\uhgamma} \leq C_2 \left( \frac{C_1(p-1)}{G'(\gamma_2)} \right)^p  \frac{e^{-\frac{p}{p-1}LG(\gamma_2)}}{e^{-\frac{p}{p-1}LG(\gamma_2)}} \frac{-L}{(-L)^p} = C_2 \left( \frac{C_1(p-1)}{G'(\gamma_2)} \right)^p \frac{1}{(-L)^{p-1}},\ \forall L<L_2.
    \]
    Taking the limit as $L \to -\infty$, we obtain the desired result. \qedhere
    \end{enumerate}
\end{proof}

The next result establishes that, given any $\lambda>0$ and any $\gamma_1\in [\alpha,\beta)$, one can always find an unbounded range of $L$'s such that problem~\eqref{eq:PbL} has a nonnegative solution whose maximum lies between $\gamma_1$ and $\beta$.

\begin{proposition}
\label{prop:hess}
Let $f\in C(\R)$ satisfy $f(0)\geq 0$ and $f(s)>0$ for $s\in (\alpha,\beta)$, where $0<\alpha<\beta$ are two zeros of $f$, and let $g\in C(\R)$ be positive. Then, for each $\overline\lambda>0$ and each $\gamma_1\in [\alpha,\beta)$, there is some $\overline{L} \in \R$ such that for any $\lambda > \overline{\lambda}$ and any $L<\overline{L}$ problem~\eqref{eq:PbL} has a nonnegative solution $u\in \cuo$ with $\|u\|_\co \in (\gamma_1, \beta]$.
\end{proposition}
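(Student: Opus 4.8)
The plan is to use the change of variables of Section~\ref{sec:initial} to pass to a semilinear $p$-Laplacian problem, and then to run a \emph{quantitative} variational localization fuelled by Lemma~\ref{lem:technical}. Applying Proposition~\ref{prop:equiv} with $g$ replaced by $Lg$ (so that $\Psi$ becomes $\Psi_L$ from~\eqref{eq:def_Psi_L}), the substitution $v=\Psi_L(u)$ turns~\eqref{eq:PbL} into $-\Delta_p v=\lambda\,\tilde f_L(v)$, with $\tilde f_L(s)=f(\Psi_L^{-1}(s))e^{-LG(\Psi_L^{-1}(s))}$, and it suffices to produce a nonnegative $v\in\cuo$ with $\|v\|_\co\in(\Psi_L(\gamma_1),\Psi_L(\beta)]$. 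As in the proof of Theorem~\ref{th:existence}, the change of variables $\rho=\Psi_L^{-1}(\eta)$ identifies the primitive $\widetilde F_L(s):=\int_0^s\tilde f_L$ through $\widetilde F_L(\Psi_L(t))=\int_0^t f(\rho)e^{-\frac{p}{p-1}LG(\rho)}\,\mathrm d\rho$, so that $H_{\gamma_1,\gamma_2}(s,L)=\widetilde F_L(\Psi_L(\gamma_2))-\widetilde F_L(\Psi_L(s))$ and $\uhgamma=\widetilde F_L(\Psi_L(\gamma_2))-\max_{[0,\Psi_L(\gamma_1)]}\widetilde F_L$ is exactly the energy gap between the level $\Psi_L(\gamma_2)$ and the levels below $\Psi_L(\gamma_1)$. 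For $L$ negative enough Lemma~\ref{lem:technical}~i) ensures $\uhgamma>0$; the real content of the statement is the localization of the maximum above $\gamma_1$ uniformly in $\lambda>\overline\lambda$, which I would obtain by a direct variational argument rather than by invoking Theorem~\ref{th:existence} (whose threshold depends on $L$).

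Fix an auxiliary level $\gamma_2\in(\gamma_1,\beta)$ and consider the energy $J_L(v)=\frac1p\into|\nabla v|^p-\lambda\into\widetilde F_L(v)$ minimized over the order interval $\{v\in\sob:0\le v\le\Psi_L(\beta)\}$. Extending $\tilde f_L$ to be constant equal to $f(0)\ge0$ below $0$ and equal to $0$ above $\Psi_L(\beta)$ makes it bounded, so the standard truncation scheme (as in~\cite{Hess,Ng-Sch}, see also~\cite{BMP}) produces a minimizer $v_{L,\lambda}$ that solves the equation, is confined to $[0,\Psi_L(\beta)]$ upon testing with $v^-$ and $(v-\Psi_L(\beta))^+$, and is of class $C^1$ by~\cite{Lieb}.

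The localization rests on two competing energy bounds. On the one hand, any admissible $v$ with $\|v\|_\co\le\Psi_L(\gamma_1)$ satisfies $J_L(v)\ge-\lambda\into\widetilde F_L(v)\ge-\lambda|\Omega|\max_{[0,\Psi_L(\gamma_1)]}\widetilde F_L$. On the other hand, I would test the infimum with the explicit competitor $w=\Psi_L(\gamma_2)\,\phi$, where $\phi$ is a fixed Lipschitz cutoff equal to $1$ on $\Omega_\delta=\{x:\mathrm{dist}(x,\partial\Omega)>\delta\}$ and vanishing on $\partial\Omega$; its gradient cost is $C(\delta)\Psi_L(\gamma_2)^p$ and its potential is at least $\widetilde F_L(\Psi_L(\gamma_2))|\Omega_\delta|$ minus a boundary-layer term governed by $\min_{[0,\Psi_L(\gamma_2)]}\widetilde F_L$. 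Comparing the two, the minimizer is forced to satisfy $\|v_{L,\lambda}\|_\co>\Psi_L(\gamma_1)$ as soon as
\[
C(\delta)\,\Psi_L(\gamma_2)^p \;<\; \lambda\,|\Omega_\delta|\,\uhgamma \;-\; \lambda\cdot(\text{lower-order terms}).
\]

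The heart of the matter --- and the main obstacle --- is to make this inequality hold at a single threshold $\overline L$ \emph{uniformly} over all $\lambda>\overline\lambda$, which is precisely what Lemma~\ref{lem:technical} supplies after dividing through by $\lambda\,\uhgamma$. The gradient contribution becomes $C(\delta)\Psi_L(\gamma_2)^p/(\lambda\uhgamma)\le C(\delta)\Psi_L(\gamma_2)^p/(\overline\lambda\,\uhgamma)\to0$ by item~iii) (this is exactly where $\gamma_2<\beta$ is used), uniformly for $\lambda\ge\overline\lambda$. The lower-order terms are $\max_{[0,\Psi_L(\gamma_1)]}\widetilde F_L$ and $|\min_{[0,\Psi_L(\gamma_2)]}\widetilde F_L|$; since $\widetilde F_L(0)=0$ and $\widetilde F_L$ is nondecreasing on $[\Psi_L(\gamma_1),\Psi_L(\gamma_2)]$ (because $f>0$ on $(\gamma_1,\gamma_2)$), both are bounded by the oscillation $\max_{[0,\Psi_L(\gamma_1)]}\widetilde F_L-\min_{[0,\Psi_L(\gamma_1)]}\widetilde F_L=\ohgamma-\uhgamma$, which is $o(\uhgamma)$ by item~ii) (equivalently, $\widetilde F_L(\Psi_L(\gamma_2))/\uhgamma\to1$). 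Consequently, after division by $\lambda\uhgamma$, the displayed inequality reduces to $o(1)<|\Omega_\delta|-o(1)$ as $L\to-\infty$, uniformly for $\lambda\ge\overline\lambda$, and hence holds for every $L<\overline L=\overline L(\overline\lambda,\gamma_1,\delta)$. Undoing the change of variables through Proposition~\ref{prop:equiv} returns $u=\Psi_L^{-1}(v_{L,\lambda})\in\cuo$, a nonnegative solution of~\eqref{eq:PbL} with $\|u\|_\co\in(\gamma_1,\beta]$, which proves the claim.
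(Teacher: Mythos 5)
Your proposal is correct and follows essentially the same route as the paper: transform to the semilinear problem via $\Psi_L$, minimize the (truncated) energy functional, compare the minimizer against a cutoff competitor at level $\Psi_L(\gamma_2)$ with $\gamma_2\in(\gamma_1,\beta)$, and invoke items i)--iii) of Lemma~\ref{lem:technical} to make the comparison uniform in $\lambda>\overline\lambda$ for $L$ sufficiently negative. The only differences are cosmetic: the paper argues by contradiction along sequences $L_n\to-\infty$, $\lambda_n>\overline\lambda$ and truncates $f$ and $g$ before transforming, whereas you argue directly with explicit inequalities and extend the transformed nonlinearity $\tilde f_L$ instead.
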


\begin{proof}
Since we are interested only in nonnegative solutions less than $\beta$, we redefine $f$ and $g$, without changing the notation, as
\begin{equation*}
f(s) \coloneq \left\{ \begin{alignedat}{1}
(1+s) f(0) \quad &\mbox{if} \;\; -1\leq s< 0, \\
f(s) \quad &\mbox{if} \;\; 0\leq s<\beta, \\
0 \quad &\mbox{otherwise},
\end{alignedat}
\right. 
\qquad \quad \text{and} \quad \qquad
g(s) \coloneq \left\{ \begin{alignedat}{1}
(1+s) g(0) \quad &\mbox{if} \;\; -1\leq s< 0, \\
g(s) \quad &\mbox{if} \;\; 0\leq s<\beta, \\
(\beta+1-s)g(\beta) \quad &\mbox{if} \;\; \beta \leq s<\beta+1, \\
0 \quad &\mbox{otherwise}.
\end{alignedat}
\right.
\end{equation*}
We observe that the boundedness of $G$ implies that the range of $\Psi_L$, defined in~\eqref{eq:def_Psi_L}, is $\R$, and consequently $\operatorname{Dom}\left(\Psi_L^{-1} \right)=\R$. We then define, as in~\eqref{eq:def_tilde_f}, the function
\begin{equation*}
    \tilde{f}_L(s) \coloneqq f\left(\Psi_L^{-1}(s) \right) e^{-LG\left(\Psi_L^{-1}(s)\right)},\ \forall s\in \R,
\end{equation*}
and we denote $\tilde F_L (s) \coloneq \int_0^{s} \tilde f_L(\eta) \ \mathrm{d}\eta$ for any $s\in \R$. 

Let $\overline\lambda>0$ and let $\gamma_1 \in [\alpha,\beta)$. Thanks to Proposition~\ref{prop:equiv}, it suffices to show the existence of some $\overline{L}\in\R$ such that, for any $\lambda > \overline{\lambda}$ and any $L<\overline{L}$, problem
\begin{equation}
\label{eq:PbSemiCV_L}
\left\{ \begin{alignedat}{2}
-\Delta_p v  &= \lambda \tilde f_L (v) \quad &&\mbox{in} \;\; \Omega, \\
v &= 0 \quad &&\mbox{on} \;\; \partial\Omega,
\end{alignedat}
\right.
\end{equation}
admits a nonnegative solution $v\in \cuo$ with $\|v\|_\co \in (\Psi_L(\gamma_1), \Psi_L(\beta)]$.

Consider the functional
\[
I_{\lambda,L}(v)= \frac{1}{p} \into |\nabla v|^p - \lambda \into \tilde F_L(v),\ \forall v\in \sob.
\]
Since $\tilde F_L$ is continuous and bounded, the functional $I_{\lambda,L}$ is coercive and weakly lower semicontinuous for every $\lambda>0$ and every $L\in\R$. Therefore, $I_{\lambda,L}$ attains a global minimum at some $v_{\lambda,L}\in \sob$. As $v_{\lambda,L}$ is a solution to~\eqref{eq:PbSemiCV_L}, since $\tilde f_L(s)\geq 0$ when $s\leq 0$ and $\tilde f_L(s)=0$ when $s\geq \Psi_L(\beta)$, it can be easily deduced that $0\leq v_{\lambda,L}\leq \Psi_L(\beta)$ $\ae$ in $\Omega$. Moreover, the regularity theory (\cite{Lieb}) implies that $v_{\lambda,L} \in \cuo$.

In the following, we show that there is some $\overline{L}\in \R$ such that this minimizer has $\co$-norm in $(\Psi_L(\gamma_1), \Psi_L(\beta)]$ for any $\lambda>\overline{\lambda}$ and any $L<\overline{L}$. 

Assume, by contradiction, that there is some sequence $L_n\in \R$ with $L_n\to -\infty$ and some $\lambda_n>\overline{\lambda}$ such that $v_n\coloneq v_{\lambda_n,L_n}$ verifies $\|v_n\|_\co \leq \Psi_{L_n}(\gamma_1)$ for all $n\in \N$. Let $\gamma_2\in (\gamma_1,\beta)$ and consider an open set $\Omega_0 \subset\subset \Omega$. We take a cutoff function $w_n\in C_0^1 (\overline\Omega)$ such that $w_n\equiv \Psi_{L_n}(\gamma_2)$ in $\Omega_0$, $0\leq w_n \leq \Psi_{L_n}(\gamma_2)$ in $\Omega \setminus \Omega_0$, and $\|\nabla w_n\|_{C(\overline{\Omega}, \R^N)} \leq c_0 \Psi_{L_n}(\gamma_2)$, where $c_0>0$ is a constant not depending on $n$. We have that
\begin{equation}
\label{eq:Pf_P_Hess_1}
\begin{split}
\int_{\Omega} \left(\tilde F_{L_n}(w_n) - \tilde F_{L_n}(v_n) \right) 
&=
\int_{\Omega_0} \tilde F_{L_n}(w_n) +
\int_{\Omega \setminus \Omega_0} \tilde F_{L_n}(w_n) - \int_{\Omega} \tilde F_{L_n} (v_n)
\\
&=
\int_{\Omega_0} \tilde F_{L_n}(\Psi_{L_n}(\gamma_2)) +
\int_{\Omega \setminus \Omega_0} \tilde F_{L_n}(w_n) - \int_{\Omega} \tilde F_{L_n} (v_n)
\\
&=
\int_{\Omega} \tilde F_{L_n}(\Psi_{L_n}(\gamma_2)) - \int_{\Omega \setminus \Omega_0} \left(\tilde F_{L_n}(\Psi_{L_n}(\gamma_2))  - \tilde F_{L_n}(w_n) \right) - \into \tilde F_{L_n}(v_n)
\\
&=
\int_{\Omega} \int_{v_n}^{\Psi_{L_n}(\gamma_2)} \tilde f_{L_n}(\eta)\ \mathrm{d}\eta - \int_{\Omega \setminus \Omega_0} \int_{w_n}^{\Psi_{L_n}(\gamma_2)} \tilde f_{L_n}(\eta)\ \mathrm{d}\eta
\\[1mm]
&\geq \underline{h}_{\gamma_1,\gamma_2}(L_n)|\Omega| - \overline{h}_{\gamma_1,\gamma_2}(L_n) |\Omega \setminus \Omega_0|,
\end{split}
\end{equation}
where $\uhgamma$ and $\ohgamma$ are defined in~\eqref{eq:def_hmin_hmax}. Here, we have taken into account (reasoning as in~\eqref{eq:Pf_Th_ex_2}) that
\begin{align*}
\int_{w_n}^{\Psi_{L_n}(\gamma_2)} \tilde f_{L_n}(\eta) \ \mathrm{d}\eta 
&\leq \max_{0\leq s \leq \Psi_{L_n}(\gamma_2)} \left\{\int_s^{\Psi_{L_n}(\gamma_2)} \tilde f_{L_n}(\eta) \ \mathrm{d}\eta \right\}
= \max_{0\leq s \leq \gamma_2} \left\{\int_s^{\gamma_2} f(\rho) e^{-\frac{p}{p-1}L_n G(\rho)} \ \mathrm{d}\rho \right\}\\
&= \max_{0\leq s \leq \gamma_1} \left\{\int_s^{\gamma_2} f(\rho) e^{-\frac{p}{p-1}L_n G(\rho)} \ \mathrm{d}\rho \right\} 
= \overline{h}_{\gamma_1,\gamma_2}(L_n)
\end{align*}
and that, since $v_n\leq \Psi_{L_n}(\gamma_1)$, it holds
\begin{align*}
\int_{v_n}^{\Psi_{L_n}(\gamma_2)} \tilde f_{L_n}(\eta) \ \mathrm d \eta 
= \int_{\Psi_{L_n}^{-1}(v_n)}^{\gamma_2} f\left(\rho \right)  e^{-\frac{p}{p-1}L_n G(\rho)} \ \mathrm{d}\rho 
\geq \min_{0\leq s \leq \gamma_1} \left\{\int_s^{\gamma_2} f(\rho) e^{-\frac{p}{p-1}L_n G(\rho)} \ \mathrm{d}\rho \right\} = \underline{h}_{\gamma_1,\gamma_2}(L_n).
\end{align*} 
Taking~\eqref{eq:Pf_P_Hess_1} into account and using that $\|\nabla w_n\|_{{C(\overline{\Omega}, \R^N)}} \leq c_0 \Psi_{L_n}(\gamma_2)$, we deduce
\begin{equation}
\label{eq:Pf_P_Hess_2}
\begin{split}
I_{\lambda_n,L_n}(w_n) - I_{\lambda_n,L_n}(v_n) &= \frac{1}{p} \into (|\nabla w_n|^p-|\nabla v_n|^p) - \lambda_n \int_{\Omega} (\tilde F_{L_n}(w_n) - \tilde F_{L_n}(v_n))
\\
&\leq \frac{1}{p} \into |\nabla w_n|^p - \lambda_n \int_{\Omega} (\tilde F_{L_n}(w_n) - \tilde F_{L_n} (v_n))
\\
&\leq \frac{1}{p} \into |\nabla w_n|^p - \lambda_n \left( \underline{h}_{\gamma_1,\gamma_2}(L_n) |\Omega| - \overline{h}_{\gamma_1,\gamma_2}(L_n) |\Omega \setminus \Omega_0| \right)
\\
&\leq \frac{1}{p} c_0^p \Psi_{L_n}(\gamma_2)^p |\Omega| - \lambda_n \left( \underline{h}_{\gamma_1,\gamma_2}(L_n)|\Omega| - \overline{h}_{\gamma_1,\gamma_2}(L_n) |\Omega\setminus \Omega_0| \right)\\
&= \underline{h}_{\gamma_1,\gamma_2}(L_n) \left[ \frac{c_0^p |\Omega|}{p}\frac{\Psi_{L_n}(\gamma_2)^p}{\underline{h}_{\gamma_1,\gamma_2}(L_n)} - \lambda_n \left( |\Omega| -  |\Omega\setminus \Omega_0| \frac{\overline{h}_{\gamma_1,\gamma_2}(L_n)}{\underline{h}_{\gamma_1,\gamma_2}(L_n)} \right) \right].
\end{split}
\end{equation}
Since $L_n\to -\infty$, as a consequence of Lemma~\ref{lem:technical}, we can take $n_0\in\N$ such that
\[
|\Omega| -  |\Omega\setminus \Omega_0| \frac{\overline{h}_{\gamma_1,\gamma_2}(L_n)}{\underline{h}_{\gamma_1,\gamma_2}(L_n)} >0,\ \forall n\geq n_0.
\]
As $\lambda_n>\overline{\lambda}$, from~\eqref{eq:Pf_P_Hess_2} we deduce for any $n\geq n_0$ that
\[
I_{\lambda_n,L_n}(w_n) - I_{\lambda_n,L_n}(v_n) \leq \underline{h}_{\gamma_1,\gamma_2}(L_n) \left[ \frac{c_0^p |\Omega|}{p}\frac{\Psi_{L_n}(\gamma_2)^p}{\underline{h}_{\gamma_1,\gamma_2}(L_n)} - \overline{\lambda} \left( |\Omega| -  |\Omega\setminus \Omega_0| \frac{\overline{h}_{\gamma_1,\gamma_2}(L_n)}{\underline{h}_{\gamma_1,\gamma_2}(L_n)} \right) \right].
\]
Finally, by Lemma~\ref{lem:technical}, we have that $\lim_{n\to \infty} \underline{h}_{\gamma_1,\gamma_2}(L_n) = \infty$ and that
\[
\lim_{n\to \infty} \left[ \frac{c_0^p |\Omega|}{p}\frac{\Psi_{L_n}(\gamma_2)^p}{\underline{h}_{\gamma_1,\gamma_2}(L_n)} - \overline{\lambda} \left( |\Omega| -  |\Omega\setminus \Omega_0| \frac{\overline{h}_{\gamma_1,\gamma_2}(L_n)}{\underline{h}_{\gamma_1,\gamma_2}(L_n)} \right) \right] = \overline{\lambda}(|\Omega\setminus \Omega_0|-|\Omega|)<0.
\]
Therefore, we can find some $n_1\in \N$ such that
\[
I_{\lambda_n,L_n}(w_n) < I_{\lambda_n,L_n}(v_n), \ \forall n\geq n_1,
\]
but this contradicts the fact that $v_n$ is a minimizer of $I_{\lambda_n,L_n}$. 

Then, we conclude that there is some $\overline{L}\in \R$ such that $\Psi_L(\gamma_1) < \|v_{\lambda,L}\|_\co \leq\Psi_L(\beta)$ for any $\lambda>\overline{\lambda}$ and any $L<\overline{L}$, as desired.
\end{proof}

\subsection{Stability}

In some situations, it is necessary to pass to the limit in problems such as~\eqref{eq:PbL}. Although it is standard, we include the following result for the sake of completeness.

\begin{lemma}
\label{lem:stab}
Let $f, g\in C(\R)$. Let $u_n\in \cuo$ be a solution to
\begin{equation}
\label{eq:PbStab}
\left\{ \begin{alignedat}{2}
-\Delta_p u_n + L_n g(u_n) |\nabla u_n| ^p &= \lambda_n f(u_n) \quad &&\mbox{in} \;\; \Omega, \\
u_n &= 0 \quad &&\mbox{on} \;\; \partial\Omega, 
\end{alignedat}
\right.
\end{equation}
where $\lambda_n$ and $L_n$ are sequences of real numbers. If $L_n\to L\in \R$, $\lambda_n \to \lambda \in \R$, and for some $M>0$ it holds $\|u_n\|_\co \leq M$ for all $n\in \N$, then, up to a subsequence, $u_n\to u$ in $C(\overline{\Omega})$, where $u$ is a solution to~\eqref{eq:PbL}.
\end{lemma}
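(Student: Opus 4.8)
The plan is to reduce~\eqref{eq:PbStab} to a sequence of semilinear $p$-Laplacian problems with uniformly bounded right-hand sides, for which the standard regularity and compactness machinery applies, and then to transfer the conclusion back through the change of variables. Since the whole argument rests on Proposition~\ref{prop:equiv} (applied with $g$ replaced by $L_n g$), I would first set $v_n \coloneqq \Psi_{L_n}(u_n)$, with $\Psi_{L_n}$ as in~\eqref{eq:def_Psi_L}. By Proposition~\ref{prop:equiv}, each $v_n$ solves
\[
-\Delta_p v_n = \lambda_n \tilde f_{L_n}(v_n) \quad \text{in } \Omega, \qquad v_n = 0 \text{ on } \partial\Omega,
\]
where $\tilde f_{L_n}(s) = f(\Psi_{L_n}^{-1}(s)) e^{-L_n G(\Psi_{L_n}^{-1}(s))}$. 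The point of this step is that the natural growth term, which is the only obstruction to direct energy estimates, is absorbed into the change of variables.

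The next step is to establish uniform bounds. Because $\|u_n\|_\co \leq M$ and $L_n \to L$, the functions $\Psi_{L_n}$ are uniformly bounded on $[-M,M]$; hence $\{v_n\}$ lives in a fixed compact interval and, in particular, $\|v_n\|_\co \leq M'$ for some $M'>0$, so that $\tilde f_{L_n}(v_n)$ is uniformly bounded in $\lio$. Testing the equation for $v_n$ with $v_n$ itself yields $\into |\nabla v_n|^p \leq \lambda_n \|\tilde f_{L_n}(v_n)\|_\lio \|v_n\|_\luo \leq C$, so $\{v_n\}$ is bounded in $\sob$. Since the right-hand sides are uniformly bounded in $\lio$, the regularity theory of~\cite{Lieb} provides a uniform bound for $\{v_n\}$ in $C^{1,\mu}(\overline\Omega)$ for some $\mu\in(0,1)$. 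By Arzel\`a--Ascoli, up to a subsequence $v_n \to v$ in $C^1(\overline\Omega)$, with $v\in \cuo$.

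It remains to pass to the limit. Since $\nabla v_n \to \nabla v$ uniformly, $|\nabla v_n|^{p-2}\nabla v_n \to |\nabla v|^{p-2}\nabla v$ uniformly, and since $L_n \to L$ forces $\Psi_{L_n}^{-1} \to \Psi_L^{-1}$ uniformly on the relevant compact interval, the continuity of $f$ and $G$ gives $\tilde f_{L_n}(v_n) \to \tilde f_L(v)$ uniformly. Together with $\lambda_n \to \lambda$, passing to the limit in the weak formulation shows that $v$ solves $-\Delta_p v = \lambda \tilde f_L(v)$ with $v=0$ on $\partial\Omega$. Applying Proposition~\ref{prop:equiv} in the reverse direction, $u \coloneqq \Psi_L^{-1}(v) \in \cuo$ is a solution to~\eqref{eq:PbL}, and $u_n = \Psi_{L_n}^{-1}(v_n) \to \Psi_L^{-1}(v) = u$ in $C(\overline\Omega)$, as claimed.

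As for the main obstacle: the only genuine difficulty is the uniform a priori gradient estimate, which is not directly available for~\eqref{eq:PbStab} because of the term $L_n g(u_n)|\nabla u_n|^p$; the change of variables circumvents this by turning the problem into a semilinear one with an $\lio$-bounded source. The remaining care is to ensure that all passages are uniform in $n$, which is guaranteed by the convergence $L_n \to L$ (giving uniform convergence of $\Psi_{L_n}$, $\Psi_{L_n}^{-1}$ and $\tilde f_{L_n}$ on the fixed compact range of the $v_n$) together with the uniform bound $\|u_n\|_\co \leq M$.
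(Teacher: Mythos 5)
Your proof is correct, but it takes a genuinely different route from the paper. The paper's proof is more direct: it observes that problem~\eqref{eq:PbStab} satisfies, uniformly in $n$, the structure conditions of~\cite[Theorem~1]{Lieb} (whose scope covers first-order terms with natural growth), since $|L_n|$, $|\lambda_n|$ are bounded and $\|u_n\|_\co\leq M$; this yields a uniform $C^{1,\mu}(\overline\Omega)$ bound on the $u_n$ themselves, hence a subsequence converging in $C^1$, and one passes to the limit directly in the weak formulation of~\eqref{eq:PbStab}. You instead route everything through Proposition~\ref{prop:equiv}: you transform to the semilinear problems $-\Delta_p v_n=\lambda_n\tilde f_{L_n}(v_n)$, apply regularity there, pass to the limit in the semilinear setting, and transfer back via $\Psi_L^{-1}$. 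What your detour buys is that you only need elliptic regularity for $-\Delta_p v=h$ with $h\in\lio$, i.e.\ a much weaker form of the regularity theory, and the limit passage becomes trivial because the gradient term is gone; the price is that you must control the change of variables uniformly in $n$ (uniform convergence of $\Psi_{L_n}$, of $\tilde f_{L_n}$, and of the inverses $\Psi_{L_n}^{-1}$). That last point is the only place where your write-up is slightly terse: the uniform convergence $\Psi_{L_n}^{-1}\to\Psi_L^{-1}$ should be justified by noting that $\Psi_{L_n}'=e^{-\frac{1}{p-1}L_nG}$ is bounded below by a positive constant, uniformly in $n$, on $[-M,M]$ (or by a compactness argument on subsequences); this is standard and does not constitute a gap. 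Both approaches are sound; the paper's is shorter, yours is more self-contained with respect to the regularity input.
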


\begin{proof}
This result is a straightforward consequence of the seminal paper~\cite{Lieb}. Indeed, problem~\eqref{eq:PbStab} satisfies, uniformly in $n$, all the structure conditions required in~\cite[Theorem~1]{Lieb}. Therefore, there exists some $C>0$ and some $\mu\in (0,1)$ such that
\[
\|u_n\|_{C^{1,\mu}(\overline{\Omega})} \leq C,\ \forall n\in \N.
\]
As a consequence, $u_n$ has a subsequence, still denoted by $u_n$, such that $u_n\to u$ in $C(\overline \Omega)$ and $\nabla u_n \to \nabla u$ in $C(\overline \Omega, \R^N)$ for some $u\in \cuo$. Then one can pass to the limit in the weak formulation of~\eqref{eq:PbStab} to conclude that $u$ is a solution to~\eqref{eq:PbL}.
\end{proof}

\subsection{Structure of the solution set as \texorpdfstring{$L$}{L} varies}
Here, we aim to prove Theorem~\ref{th:Lfrak} and Theorem~\ref{th:conv_beta}. For clarity, we will subdivide Theorem~\ref{th:Lfrak} into smaller results, from which it will be easily deduced. In contrast, Theorem~\ref{th:conv_beta} will follow directly from Proposition~\ref{prop:hess}.

We start by studying for which values of $L$ problem~\eqref{eq:PbL} has, for some $\lambda>0$, a nonnegative solution with maximum in $[\alpha,\beta]$. In other words, we study the specific shape of the set $\mathfrak{L}$ defined in~\eqref{eq:def_Lfrak}.

\begin{proposition}
\label{prop:rangeL}
Let $f\in C(\R)$ satisfy $f(0)\geq 0$ and $f(s)>0$ for $s\in (\alpha,\beta)$, where $0<\alpha<\beta$ are two zeros of $f$, and let $g\in C(\R)$ be positive. If $\mathfrak{L}$ denotes the set defined in~\eqref{eq:def_Lfrak}, then there exists $\tilde L\in (-\infty,\infty]$ such that
\[
\mathfrak{L} = (-\infty,\tilde L).
\]
Moreover, $\tilde L = \infty$ if $f\geq 0$, whereas $\tilde L < \infty$ if $f$ changes sign in $[0,\beta]$.
\end{proposition}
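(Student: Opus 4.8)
The plan is to characterize $\mathfrak{L}$ as a downward-closed half-line by first reducing everything to the equivalent semilinear problem~\eqref{eq:PbSemiCV_L} via Proposition~\ref{prop:equiv}, and then analysing how the area condition~\eqref{eq:area_cond} depends on $L$. Recall from the proof of Theorem~\ref{th:existence} that membership $L\in\mathfrak{L}$ is governed by whether the transformed nonlinearity $\tilde f_L$ satisfies its area condition, which (reasoning as in~\eqref{eq:Pf_Th_ex_2}) is equivalent to
\[
\int_s^\beta f(\eta)\, e^{-\frac{p}{p-1}L G(\eta)}\,\mathrm d\eta>0,\ \forall s\in[0,\beta).
\]
So I would introduce the quantity $\underline h_{\beta}(L):=\min_{s\in[0,\beta)} H_{s,\beta}(L)$ built from~\eqref{eq:def_Hgamma} and work throughout with the sign of these integrals as a function of $L$; the necessity direction of Theorem~\ref{th:existence}(ii) together with its sufficiency part (applied after the $f,g$ truncation used in Proposition~\ref{prop:hess}) shows that $L\in\mathfrak L$ precisely when this area condition holds, so the whole problem becomes the study of a one-parameter family of weighted-integral positivity conditions.

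Next I would establish the two structural claims. \emph{Downward closure:} I would show that if $L_0\in\mathfrak{L}$ then $L\in\mathfrak{L}$ for every $L<L_0$. The key is a monotonicity/convexity comparison of the weights $e^{-\frac{p}{p-1}LG(\eta)}$: since $g>0$, $G$ is strictly increasing, so decreasing $L$ multiplies the integrand by a factor that is increasing in $\eta$, thereby up-weighting the region near $\beta$ where $f>0$ relative to any region near $0$ where $f$ may be negative. Making this precise — e.g.\ by factoring out $e^{-\frac{p}{p-1}LG(s)}$ or by comparing the ratio of the positive-mass and negative-mass contributions — gives that positivity of the integral for all $s$ is preserved as $L$ decreases, so $\mathfrak L$ is an interval unbounded below. \emph{The endpoint:} I would set $\tilde L:=\sup\mathfrak L\in(-\infty,+\infty]$ and argue that $\mathfrak L$ is open at the top, i.e.\ $\tilde L\notin\mathfrak L$. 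By continuity of $L\mapsto H_{s,\beta}(L)$ (Lemma-type continuity from~\eqref{eq:def_Hgamma}) combined with a stability argument (Lemma~\ref{lem:stab}), if $\tilde L$ were finite and lay in $\mathfrak L$ then nearby larger $L$ would also lie in $\mathfrak L$, contradicting the definition of the supremum; hence $\mathfrak L=(-\infty,\tilde L)$.

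Finally I would pin down when $\tilde L$ is finite versus infinite. If $f\geq 0$ on $[0,\beta]$ then for every $L$ the integrand $f(\eta)e^{-\frac{p}{p-1}LG(\eta)}$ is nonnegative and strictly positive on $(\alpha,\beta)$, so the area condition holds for all $L\in\R$ and $\tilde L=\infty$. If instead $f$ changes sign in $[0,\beta]$, there is an interval in $[0,\alpha)$ on which $f<0$; choosing $s$ just below that interval and letting $L\to+\infty$, the weight $e^{-\frac{p}{p-1}LG(\eta)}$ decays fastest where $G$ is largest, i.e.\ near $\beta$, so the positive contribution from $(\alpha,\beta)$ is suppressed relative to the negative contribution near the sign change. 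A Laplace/Lemma~\ref{lem:taylor}-type asymptotic estimate of the competing integrals then shows $H_{s,\beta}(L)<0$ for large $L$, forcing $\tilde L<\infty$. I expect this last finiteness estimate to be the main obstacle, since it requires quantifying the exponential competition between the negative mass of $f$ (at smaller $G$-values) and the positive mass (at larger $G$-values) and verifying that the former dominates for large positive $L$; the precise choice of base point $s$ and the Laplace-type comparison of the two integrals is the delicate part of the argument.
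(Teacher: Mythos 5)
Your overall strategy matches the paper's up to one step: both proofs reduce membership in $\mathfrak{L}$ to the $L$-dependent area condition via Theorem~\ref{th:existence} (the paper states this as $\mathfrak{L}=\{L\in\R:\underline{h}_{\alpha,\beta}(L)>0\}$ with $\underline{h}_{\alpha,\beta}$ from~\eqref{eq:def_hmin_hmax}), both dispose of the case $f\geq 0$ trivially, and your finiteness argument for sign-changing $f$ is exactly the paper's (isolate $[s_0,s_0+\varepsilon_0]\subset(0,\alpha)$ where $f\leq -c_0$ and let the factor $e^{-\frac{p}{p-1}L(G(s_0+2\varepsilon_0)-G(s_0+\varepsilon_0))}$ suppress the positive mass as $L\to+\infty$). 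Where you genuinely diverge is downward closure: the paper takes a solution $u_1$ for some $L_1\in\mathfrak{L}$, notes that it is a subsolution of~\eqref{eq:PbL} for any $L<L_1$ (since $g>0$), and applies the sub-supersolution theorem of~\cite{BMP} with $\beta$ as supersolution; you instead work at the level of the integrals. Your idea is correct and can be made rigorous: with $\Phi(s)=\int_s^\beta f(\eta)e^{-\frac{p}{p-1}L_1G(\eta)}\,\mathrm{d}\eta$ and $w=e^{\frac{p}{p-1}(L_1-L)G}$ (positive, increasing), integration by parts gives $\int_s^\beta f(\eta)e^{-\frac{p}{p-1}LG(\eta)}\,\mathrm{d}\eta=\Phi(s)w(s)+\int_s^\beta\Phi(\eta)w'(\eta)\,\mathrm{d}\eta>0$. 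This buys a purely analytic proof of the interval structure that bypasses~\cite{BMP}, at the cost of this (easy) weight lemma.

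There are, however, two genuine gaps. First, when $f$ changes sign you never show $\mathfrak{L}\neq\emptyset$, yet this is exactly what makes $\tilde L=\sup\mathfrak{L}>-\infty$, which is part of the statement; downward closure plus boundedness above are perfectly compatible with $\mathfrak{L}=\emptyset$. The paper gets nonemptiness from Lemma~\ref{lem:technical}~i) ($\underline{h}_{\alpha,\beta}(L)\to\infty$ as $L\to-\infty$); in your framework the symmetric Laplace-type comparison as $L\to-\infty$ (for $L<0$ the weight is increasing in $\eta$, so the positive mass of $f$ on a compact subinterval of $(\alpha,\beta)$ dominates the bounded negative mass on $[0,\alpha]$) would do, but it must be stated. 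Second, your reduction of the area condition to ``$\underline{h}_\beta(L):=\min_{s\in[0,\beta)}H_{s,\beta}(L)>0$'' is ill-posed: since $H_{\alpha,\beta}(s,L)\to 0$ as $s\to\beta^-$, that infimum is \emph{never} positive (it equals $0$ precisely when the area condition holds). You must minimize over the compact interval $[0,\alpha]$, as in~\eqref{eq:def_hmin_hmax}, using that positivity for $s\in(\alpha,\beta)$ is automatic because $f>0$ there. This compactness is also what rescues your openness argument at $\tilde L$: openness of $\{L:\underline{h}_{\alpha,\beta}(L)>0\}$ follows from continuity of a minimum over a compact set, whereas an intersection of the open sets $\{L:H_{\alpha,\beta}(s,L)>0\}$ over all $s\in[0,\beta)$ need not be open. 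Relatedly, Lemma~\ref{lem:stab} cannot help here: stability yields closedness-type conclusions, and it is the continuity of $\underline{h}_{\alpha,\beta}$ alone that the paper (correctly) invokes.
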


\begin{proof}
For problem~\eqref{eq:PbL}, the area condition~\eqref{eq:area_cond} becomes
\begin{equation*}
    \int_{s}^\beta f(\eta) e^{-\frac{p}{p-1}L G(\eta)} \ \mathrm d \eta>0,\ \forall s\in [0,\beta).
\end{equation*} 
Observe that this area condition is verified for some $L\in \R$ if and only if $\underline{h}_{\alpha,\beta}(L)>0$, where $\underline{h}_{\alpha,\beta}(L)$ is defined in~\eqref{eq:def_hmin_hmax}. Due to Theorem~\ref{th:existence}, we have the equivalence
\[
\mathfrak{L}= \left\{L\in \R: \underline{h}_{\alpha,\beta}(L)>0 \right\}.
\]
The easiest case arises when $f\geq0$. Here, it is straightforward to see that $\underline{h}_{\alpha,\beta}(L)>0$ for any $L\in \R$. We deduce that $\mathfrak{L}=\R$ and hence $\tilde L= \infty$.

When $f$ changes sign in $[0,\beta]$, we begin by noting that $\mathfrak{L}$ is not empty. Indeed, thanks to item i) of Lemma~\ref{lem:technical}, we have $\lim_{L\to -\infty} \underline{h}_{\alpha,\beta} (L) = \infty$, which implies $\mathfrak{L}\neq \emptyset$. Furthermore, $\mathfrak{L}$ is bounded above. To prove this, let $s_0\in (0,\alpha)$ and $\varepsilon_0>0$ be such that $f(s) <0$ for all $s\in [s_0,s_0+2\varepsilon_0]$, and choose $c_0>0$ such that $f(s)\leq -c_0<0$ for any $s\in[s_0,s_0+\varepsilon_0]$. Then, using that $G$ is increasing, we have for $L>0$ that
\begin{align*}
    \underline{h}_{\alpha,\beta}(L) &\leq H_{\alpha,\beta}(s_0,L) = \int_{s_0}^{s_0+\varepsilon_0} f(\eta) e^{-\frac{p}{p-1}L G(\eta)} \ \mathrm d \eta + \int_{s_0+\varepsilon_0}^\beta f(\eta) e^{-\frac{p}{p-1}L G(\eta)} \ \mathrm d \eta\\
    &\leq -c_0 \varepsilon_0 e^{-\frac{p}{p-1}L G(s_0+\varepsilon_0)} + \int_{s_0+ 2\varepsilon_0}^\beta f(\eta) e^{-\frac{p}{p-1}L G(\eta)} \ \mathrm d \eta \\
    & \leq -c_0 \varepsilon_0 e^{-\frac{p}{p-1}L G(s_0+\varepsilon_0)} + (\beta - s_0-2\varepsilon_0) e^{-\frac{p}{p-1}L G(s_0+2\varepsilon_0)} \max_{t\in [s_0+2\varepsilon_0,\beta]} f(t) \\
    &= e^{-\frac{p}{p-1}L G(s_0+\varepsilon_0)} \left( -c_0 \varepsilon_0 + (\beta - s_0-2 \varepsilon_0) e^{-\frac{p}{p-1}L(G(s_0+2\varepsilon_0)-G(s_0+\varepsilon_0))} \max_{t\in [s_0+2\varepsilon_0,\beta]} f(t) \right).
\end{align*}
Since $G(s_0+2\varepsilon_0) > G(s_0+\varepsilon_0)$, then
\[
\lim_{L\to\infty} \left( -c_0 \varepsilon_0 + (\beta - s_0-2\varepsilon_0) e^{-\frac{p}{p-1}L(G(s_0+2\varepsilon_0)-G(s_0+\varepsilon_0))} \max_{t\in [s_0+2\varepsilon_0,\beta]} f(t) \right) = -c_0 \varepsilon_0.
\]
Hence, we deduce the existence of some $L_0>0$ such that $\underline{h}_{\alpha,\beta}(L)<0$ for any $L>L_0$. 

As a consequence, if we define
\[
\tilde L \coloneqq \sup \mathfrak{L}= \sup \left\{L\in \R: \underline{h}_{\alpha,\beta}(L) >0 \right\},
\]
then we have that $\tilde L\in \R$. 

It remains for us to show that $\mathfrak{L}=(-\infty, \tilde L)$. First, observe that $\mathfrak{L}$ is an open set because $\underline{h}_{\alpha,\beta}(L)$ is continuous. In particular, this implies that $\mathfrak{L}\subseteq (-\infty,\tilde L)$. Now, we prove that the reverse inclusion also holds. Let $L\in (-\infty, \tilde L)$. By definition of supremum, we can find some $L_1\in \mathfrak{L}$ such that $L<L_1$. As $L_1\in \mathfrak{L}$, this means that for some large $\lambda$ there is $0\leq u_1\in \cuo$ with $\|u_1\|_\co\in [\alpha, \beta]$ weak solution of
\begin{equation*}
\left\{ \begin{alignedat}{2}
-\Delta_p u + L_1 g(u) |\nabla u|^p &= \lambda f(u) \quad &&\mbox{in} \;\; \Omega, \\
u &= 0 \quad &&\mbox{on} \;\; \partial\Omega.
\end{alignedat}
\right.
\end{equation*}
Since $L<L_1$, we have that $u_1$ is a strict subsolution of~\eqref{eq:PbL}. As $\overline{u}\equiv \beta$ is a supersolution of~\eqref{eq:PbL}, by~\cite[Theorem~3.1]{BMP} there is a solution $u\in \cuo$ of~\eqref{eq:PbL} such that $u_1\leq u \leq \beta$ in $\Omega$. Then, $\|u\|_\co\in [\alpha, \beta]$. Due to the definition of $\mathfrak{L}$, we obtain that $L\in \mathfrak{L}$ and thus $(-\infty,\tilde L) \subseteq \mathfrak{L}$. Therefore, we conclude that $\mathfrak{L}=(-\infty,\tilde L)$.
\end{proof}

Under the assumptions of Theorem~\ref{th:Lfrak}, the sets
\begin{align*}
    \Lambda_L &\coloneq \left\{\lambda\geq 0: \eqref{eq:PbL} \text{ has a solution } 0\leq u\in \cuo \text{ with } \|u\|_\co\in [\alpha,\beta] \right\} \quad \text{and} \\
    \overline{\Lambda}_L &\coloneq \left\{\overline{\lambda}\geq 0: \eqref{eq:PbL} \text{ has a solution } 0\leq u\in \cuo \text{ with } \|u\|_\co\in [\alpha,\beta] \text{ for any } \lambda > \overline{\lambda} \right\}
\end{align*}
are nonempty and closed for any $L\in (-\infty,\tilde L)$ as a consequence of Lemma~\ref{lem:stab} and Proposition~\ref{prop:rangeL}. Then, for any $L\in (-\infty,\tilde L)$ one can define 
\[
\lambda_{\rm min}(L) \coloneqq \min \Lambda_L \quad \text{and} \quad \overline\lambda_{\rm min}(L) \coloneqq \min \overline\Lambda_L.
\]
By definition, it holds $\lambda_{\rm min}(L) \leq \overline\lambda_{\rm min}(L)$ for any $L\in (-\infty, \tilde L)$. Moreover, both quantities are always positive because for $\lambda=0$ the only solution to~\eqref{eq:PbSemiCV_L} is $v\equiv 0$.

In the following, our aim is to study the behaviour of $\lambda_{\rm min}(L)$ as $L\to \tilde L$. We distinguish two cases: when $f$ changes sign and $\tilde L<\infty$, and when $f\geq 0$ and $\tilde L = \infty$. For the latter, we employ a strategy introduced in~\cite{Arc-Rez-Sil}, which is based on the Stampacchia regularity method (\cite{Stamp}). The next lemma, whose proof can be found in~\cite[Lemma~4.1]{Stamp}, will be needed.

\begin{lemma}[\cite{Stamp}]
    \label{lem:Stamp}
    Let $k_0\geq 0$ and let $\Theta : [k_0, \infty) \to [0, \infty)$ be a nonincreasing function. If there exist $c, a > 0$ and $b > 1$ such that
    \[
    \Theta(h) \leq \frac{c}{(h - k)^a} [\Theta(k)]^b, \quad \forall h > k > k_0,
    \]
    then
    \[
    \Theta(k_0 + d) = 0, \quad \text{for } d^a := c 2^{\frac{ab}{b - 1}} [\Theta(k_0)]^{b - 1}.
    \]
\end{lemma}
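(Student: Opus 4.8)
The plan is to prove this by the classical dyadic iteration argument of Stampacchia: build an increasing sequence of levels converging to $k_0+d$ along which $\Theta$ decays geometrically, so that the limiting value is forced to vanish. Concretely, I would set
\[
k_n := k_0 + d\left(1 - 2^{-n}\right), \qquad n \ge 0,
\]
so that $k_0$ is the starting level, $k_n \nearrow k_0+d$, and the gaps satisfy $k_{n+1}-k_n = d\,2^{-(n+1)}$. Applying the hypothesis with $h=k_{n+1}$ and $k=k_n$ then gives
\[
\Theta(k_{n+1}) \le \frac{c}{\left(d\,2^{-(n+1)}\right)^a}\,[\Theta(k_n)]^b = \frac{c\,2^{a(n+1)}}{d^a}\,[\Theta(k_n)]^b.
\]

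Next I would establish, by induction on $n$, the decay estimate $\Theta(k_n) \le \Theta(k_0)\,2^{-\gamma n}$ with the exponent $\gamma := a/(b-1)$ chosen precisely so that the recursion closes. Substituting the inductive hypothesis into the recurrence above and demanding the result be bounded by $\Theta(k_0)\,2^{-\gamma(n+1)}$ reduces, after collecting the powers of $2$, to the single condition
\[
\frac{c\,[\Theta(k_0)]^{b-1}}{d^a} \le 2^{-\frac{ab}{b-1}},
\]
where the $n$-dependence cancels exactly because $a-\gamma b = -\gamma$ for this value of $\gamma$ (equivalently, $\gamma+a = ab/(b-1)$). Since the prescribed value $d^a := c\,2^{ab/(b-1)}\,[\Theta(k_0)]^{b-1}$ turns this condition into an equality, the inductive step goes through, with the base case $n=0$ being trivial (one may assume $\Theta(k_0)>0$, otherwise $d=0$ and the conclusion is immediate). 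Hence $\Theta(k_n) \le \Theta(k_0)\,2^{-\gamma n} \to 0$.

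Finally, since $\Theta$ is nonincreasing and $k_n \le k_0+d$ for every $n$, we have $0 \le \Theta(k_0+d) \le \Theta(k_n)$, and letting $n\to\infty$ forces $\Theta(k_0+d)=0$. The one genuinely delicate point, which I would address explicitly, is the first iteration step, where $k=k_0$ is not admissible in the hypothesis (which requires $k>k_0$ strictly). I would circumvent this by applying the inequality with $k=k_0+\delta$ for small $\delta>0$, using $\Theta(k_0+\delta)\le\Theta(k_0)$ by monotonicity, and then letting $\delta\to 0^+$; this recovers the $n=0$ instance of the recurrence. Apart from this limiting adjustment, the argument is elementary, and the main care required is simply the bookkeeping of the exponents of $2$ in the induction.
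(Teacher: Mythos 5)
Your proof is correct: the dyadic levels $k_n = k_0 + d(1-2^{-n})$, the geometric-decay induction with exponent $\gamma = a/(b-1)$, and the limiting argument handling the strict inequality $k > k_0$ all check out (the exponent bookkeeping $a - \gamma b = -\gamma$ is exactly right, and the prescribed value of $d$ closes the induction with equality). The paper itself gives no proof of this lemma — it simply cites Lemma~4.1 of Stampacchia — and your argument is precisely the classical iteration proof from that reference, so there is nothing to reconcile.
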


Now, we are ready to study the behaviour of $\lambda_{\rm min}(L)$ as $L\to \tilde L$. 

\begin{proposition}
\label{prop:infty}
Let $f\in C(\R)$ satisfy $f(0)\geq 0$ and $f(s)>0$ for $s\in (\alpha,\beta)$, where $0<\alpha<\beta$ are two zeros of $f$, and let $g\in C(\R)$ be positive. Then, $\lambda_{\rm min}(L)\to \infty$ as $L\to \tilde L$.
\end{proposition}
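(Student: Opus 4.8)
The plan is to pull everything back to the semilinear problem $-\Delta_p v=\lambda\tilde f_L(v)$ via Proposition~\ref{prop:equiv}. Since $v=\Psi_L(u)$ is increasing and fixes $0$, a nonnegative solution $u$ of~\eqref{eq:PbL} with $\|u\|_\co\in[\alpha,\beta]$ corresponds to a nonnegative solution $v$ with $\|v\|_\co\in[\Psi_L(\alpha),\Psi_L(\beta)]$, and both problems share the same threshold $\lambda_{\rm min}(L)$. I argue by contradiction: suppose there are $L_n\to\tilde L$ and $\Lambda_0>0$ with $\lambda_{\rm min}(L_n)\le\Lambda_0$. Because $\Lambda_{L_n}$ is closed, I may pick for each $n$ a nonnegative solution $v_n$ (equivalently $u_n=\Psi_{L_n}^{-1}(v_n)$) at $\lambda_n:=\lambda_{\rm min}(L_n)\in(0,\Lambda_0]$ with $\|v_n\|_\co\in[\Psi_{L_n}(\alpha),\Psi_{L_n}(\beta)]$, so in particular $0\le v_n\le\Psi_{L_n}(\beta)$. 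The two cases $\tilde L<\infty$ and $\tilde L=\infty$ are then handled by completely different mechanisms.

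If $f$ changes sign and $\tilde L<\infty$, a compactness argument suffices. Since $\lambda_n\le\Lambda_0$ and $\|u_n\|_\co\le\beta$ are uniformly bounded, Lemma~\ref{lem:stab} yields, along a subsequence, $\lambda_n\to\lambda_*\in[0,\Lambda_0]$ and $u_n\to u$ in $C(\overline\Omega)$, where $u$ solves~\eqref{eq:PbL} with $L=\tilde L$ and $\lambda=\lambda_*$, and $\|u\|_\co\in[\alpha,\beta]$ by uniform convergence. The key point is that, from the definition $\tilde L=\sup\{L:\underline h_{\alpha,\beta}(L)>0\}$ together with the continuity of $L\mapsto\underline h_{\alpha,\beta}(L)$ and the openness of $\mathfrak L$ established in Proposition~\ref{prop:rangeL}, one has $\underline h_{\alpha,\beta}(\tilde L)=0$. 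Hence the area condition~\eqref{eq:area_cond} for the limit problem fails, its integral vanishing at some $s^*\in[0,\alpha]\subset[0,\beta)$. If $\lambda_*>0$, part ii) of Theorem~\ref{th:existence} (applied with $\tilde L g$ in place of $g$) rules out any nonnegative solution with maximum in $[\alpha,\beta]$, contradicting the existence of $u$. If instead $\lambda_*=0$, the transformed equation becomes $-\Delta_p v=0$ with zero boundary datum, forcing $v\equiv0$ and hence $\|u\|_\co=0\notin[\alpha,\beta]$, again a contradiction.

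If $f\ge0$ and $\tilde L=\infty$, no limiting problem is available and the divergence must be quantified; here I follow the Stampacchia strategy of~\cite{Arc-Rez-Sil}, using Lemma~\ref{lem:Stamp}. The geometric heart of the matter is that the admissible window collapses exponentially fast: $\Psi_{L}(\beta)-\Psi_{L}(\alpha)=\int_\alpha^\beta e^{-\frac{1}{p-1}LG}\le C\,e^{-\frac{1}{p-1}LG(\alpha)}$, so $\|v_n\|_\co$ is forced to lie within a distance $O(e^{-cL_n})$ of the point $\Psi_{L_n}(\beta)$ at which the reaction $\tilde f_{L_n}$ vanishes. For $k$ in the range $(\Psi_{L_n}(\alpha-\eta),\Psi_{L_n}(\alpha))$ I would test the equation with $(v_n-k)^+$ and exploit that, for $L_n$ large, $\tilde f_{L_n}$ is decreasing on $(\Psi_{L_n}(\alpha),\Psi_{L_n}(\beta))$ (because the weight $e^{-\frac{p}{p-1}LG}$ dominates $f$ there), so that $\tilde f_{L_n}(v_n)\le\tilde f_{L_n}(k)$ on $\{v_n>k\}$. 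Inserting this into the Stampacchia scheme with $\Theta(k)=|\{v_n>k\}|$ and a suitable exponent $b>1$ controls the overshoot of $v_n$ above each level $k$ by a multiple of $(\lambda_n\,\tilde f_{L_n}(k))^{1/(p-1)}$; after expressing $\tilde f_{L_n}(k)$ and the level gaps through~\eqref{eq:def_tilde_f} and~\eqref{eq:def_Psi_L} the exponential weights cancel, and accumulating the estimate over a sequence of levels filling $[\Psi_{L_n}(\alpha-\eta),\Psi_{L_n}(\beta)]$ should produce a lower bound for $\lambda_n$ that diverges as $L_n\to\infty$, contradicting $\lambda_n\le\Lambda_0$.

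The finite case is essentially routine. The main obstacle is the case $\tilde L=\infty$: a single application of Lemma~\ref{lem:Stamp} only yields the far too weak bound $\lambda_{\rm min}(L)\gtrsim L^{-(p-1)}$, since bounding $|\{v_n>k\}|$ crudely by $|\Omega|$ and cancelling the exponential weights leaves no growth in $L$. The genuine work is to run the Stampacchia iteration level by level across the exponentially thin window, so that the accumulated contributions — the PDE analogue of the divergent one-dimensional time map $\int_0^{m}\big(\tilde F_L(m)-\tilde F_L(s)\big)^{-1/p}\,\mathrm ds$, with $m=\|v_n\|_\co$, near the zero of $\tilde f_L$ — overcome the exponentially small gap $\Psi_L(\beta)-\Psi_L(\alpha)$ and force $\lambda_n\to\infty$. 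Making this accumulation quantitative, while tracking carefully how the weights $e^{-\frac{p}{p-1}LG}$ enter both the forcing and the spacing of the levels, is the delicate point of the argument.
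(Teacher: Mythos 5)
Your reduction to the transformed problem and your treatment of the case $\tilde L<\infty$ are correct and essentially coincide with the paper's argument: there one passes to the limit via Lemma~\ref{lem:stab} and contradicts $\mathfrak L=(-\infty,\tilde L)$ from Proposition~\ref{prop:rangeL}; your endgame through $\underline{h}_{\alpha,\beta}(\tilde L)=0$ and Theorem~\ref{th:existence}~ii), with the separate (and correct) handling of $\lambda_*=0$, is a valid variant of the same idea.

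The case $f\geq 0$, $\tilde L=\infty$, however, contains a genuine gap. First, your pointwise comparison $\tilde f_{L_n}(v_n)\leq \tilde f_{L_n}(k)$ on $\{v_n>k\}$ rests on the claim that $\tilde f_{L_n}$ is decreasing on $(\Psi_{L_n}(\alpha),\Psi_{L_n}(\beta))$ for $L_n$ large; this is false, since $\tilde f_{L_n}(\Psi_{L_n}(\alpha))=f(\alpha)e^{-L_nG(\alpha)}=0$ while $\tilde f_{L_n}>0$ in the interior of that interval, so $\tilde f_{L_n}$ necessarily increases just to the right of $\Psi_{L_n}(\alpha)$; taking $k$ close to $\Psi_{L_n}(\alpha)$, your inequality would even force $\tilde f_{L_n}(v_n)\leq 0$ near the maximum of $v_n$. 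Second, and more importantly, the ``accumulation over levels'' that is supposed to turn the Stampacchia scheme into a divergent lower bound for $\lambda_n$ is never carried out --- you yourself flag it as the delicate point --- and that accumulation \emph{is} the content of the proposition in this case; the one-dimensional time-map heuristic does not transfer to $N\geq 2$ through Lemma~\ref{lem:Stamp} in any straightforward way. The paper avoids all of this by \emph{not} transforming the equation here: it tests~\eqref{eq:PbStab} directly with $\frac1\varepsilon T_\varepsilon(u_n)$, where $T_\varepsilon(s)=\min\{s,\varepsilon\}$, drops the $p$-Laplacian term, and uses $g\geq\min_{t\in[0,\beta]}g(t)>0$ together with $L_n\to+\infty$ to get, after letting $\varepsilon\to 0$,
\begin{equation*}
L_n \min_{t\in[0,\beta]}g(t)\into|\nabla u_n|^p\;\leq\;\overline{\lambda}\,|\Omega|\max_{t\in[0,\beta]}f(t),
\end{equation*}
hence $u_n\to 0$ in $\sob$. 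This energy estimate --- the coercivity supplied by the gradient term itself, which becomes invisible after the change of variables --- is the missing source of smallness in your argument. Once $\|u_n\|_{\luo}\to 0$, a \emph{single}, completely standard application of Lemma~\ref{lem:Stamp} (test with $G_k(u_n)$, drop the nonnegative gradient term, use that $\lambda_n f(u_n)$ is bounded in $\lio$) gives $\|u_n\|_\co\to 0$, contradicting $\|u_n\|_\co\geq\alpha$. This also corrects your assessment that one application of Stampacchia's lemma is ``far too weak'': it is exactly enough, but only after the gradient estimate is in hand.
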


\begin{proof}
Arguing by contradiction, suppose that $\lambda_{\rm min}(L) \not\to \infty$ as $L\to \tilde L$. Then there is some $L_n\to \tilde L$ such that $\lambda_n \coloneq \lambda_{\rm min}(L_n)$ is bounded, say by $\overline{\lambda}$. Since $\lambda_n\in \Lambda_{L_n}$, for each $n\in \N$ there exists a nonnegative solution $u_n\in \cuo$ to~\eqref{eq:PbStab} such that $\|u_n\|_\co\in [\alpha, \beta]$.

When $f$ changes sign, one has that $\tilde L < \infty$ (see Proposition~\ref{prop:rangeL}). Passing to a subsequence if necessary, we may assume that $\lambda_n\to \tilde \lambda$ for some $\tilde \lambda\geq 0$. By Lemma~\ref{lem:stab}, one can pass to the limit in~\eqref{eq:PbStab} to deduce that $\tilde L\in \mathfrak{L}$, but this is a contradiction with Proposition~\ref{prop:rangeL}.

Now, we study the case where $f\geq 0$. In this setting, one has $\tilde L = \infty$. First, we prove that $u_n$ converges to 0 in $\sob$. Given $\varepsilon>0$, we define the truncation function $T_\varepsilon(s) = \min\{s,\varepsilon\}$ for any $s\in \R$.
Taking $\frac{1}{\varepsilon} T_\varepsilon(u_n)$ as test function in~\eqref{eq:PbStab}, we deduce that
\[
\into \frac{1}{\varepsilon} |\nabla T_\varepsilon(u_n)|^p + L_n \into \frac{1}{\varepsilon}T_\varepsilon(u_n) g(u_n) |\nabla u_n|^p = \into \lambda_n f(u_n) \frac{1}{\varepsilon}T_\varepsilon(u_n).
\]
After dropping the first integral, we obtain
\[
L_n \min_{t\in[0,\beta]}g(t) \into \frac{1}{\varepsilon}T_\varepsilon(u_n) |\nabla u_n|^p \leq L_n \into \frac{1}{\varepsilon}T_\varepsilon(u_n) g(u_n) |\nabla u_n|^p \leq \into \lambda_n f(u_n) \frac{1}{\varepsilon}T_\varepsilon(u_n) \leq \overline{\lambda} |\Omega| \max_{t\in [0,\beta]}f(t).
\]
Note that $\min_{t\in[0,\beta]}g(t)>0$. If $C_1\coloneq \overline{\lambda} |\Omega| \max_{t\in [0,\beta]}f(t) \left(\min_{t\in[0,\beta]}g(t)\right)^{-1}>0$, one deduces tending $\varepsilon$ to 0 that
\[
L_n \into |\nabla u_n|^p = L_n \int_{\{u_n>0\}} |\nabla u_n|^p \leq C_1.
\]
Since $L_n\to\infty$, we can ensure that $u_n\to 0$ strongly in $\sob$.

In the following, we show that $u_n$ converges to 0 in $\co$. This will lead to a contradiction because $\|u_n\|_\co\in [\alpha,\beta]$ for every $n\in\N$. To this end, we use a strategy of~\cite{Arc-Rez-Sil} based on the Stampacchia regularity method. Since we make use of the Sobolev embeddings, we present here the proof only for the case $N>p$; the case $N=p$ can be treated similarly, while for $N<p$ the result follows directly from Morrey's Theorem.

Given $k>0$, we define the function $G_k(s)= \max\{ 0, s-k\}$ for all $s\in \R$. Taking $G_k(u_n)$ as test function in~\eqref{eq:PbStab}, we obtain that
\begin{equation}
\label{eq:Pf_P_Arcoya_1}
\into |\nabla G_k(u_n)|^p + L_n \into G_k(u_n) g(u_n) |\nabla u_n|^p = \into \lambda_n f(u_n) G_k(u_n).
\end{equation}
We set $\Omega_n(k)= \{x\in\Omega: u_n>k\}$. We point out that $\lambda_n f(u_n)$ is bounded in $\lio$ by $C_2\coloneq \overline{\lambda} \max_{t\in [0,\beta]} f(t)$. Dropping a nonnegative term in~\eqref{eq:Pf_P_Arcoya_1}, and using the H\"older inequality with exponents $p^* = \frac{pN}{N-p}$ and $(p^*)' = \frac{pN}{N(p-1)+p}$, we deduce that
\begin{equation*}
\begin{split}
\|G_k(u_n)\|_{\sob}^p &= \into |\nabla G_k(u_n)|^p \leq \into \lambda_n f(u_n) G_k(u_n) \leq C_2 \int_{\Omega_n(k)} G_k(u_n) \leq C_2 |\Omega_n(k)|^\frac{1}{(p^*)'} \|G_k(u_n)\|_{L^{p^*}(\Omega)}.
\end{split}
\end{equation*}
Using the Sobolev embedding $\sob \hookrightarrow L^{p^*}(\Omega)$, we find $C_3>0$ such that
\begin{equation}
\label{eq:Pf_P_Arcoya_2}
\|G_k(u_n)\|_{L^{p^*}(\Omega)}^p \leq C_3 |\Omega_n(k)|^\frac{1}{(p^*)'} \|G_k(u_n)\|_{L^{p^*}(\Omega)}.
\end{equation}
Observe that, if we take $h>k>0$, then $\Omega_n(h)\subseteq \Omega_n(k)$ for any $n\in \N$. Using that $G_k(s)\geq h-k$ for all $s\geq h$, we deduce that
\begin{equation}
\label{eq:Pf_P_Arcoya_3}
\|G_k(u_n)\|_{L^{p^*}(\Omega)} = \left(\int_{\Omega_n(k)} G_k(u_n)^{p^*} \right)^\frac{1}{p^*} \geq (h-k) |\Omega_n(h)|^\frac{1}{p^*}.
\end{equation}
Combining~\eqref{eq:Pf_P_Arcoya_2} and~\eqref{eq:Pf_P_Arcoya_3}, we obtain, for some $C_4>0$, that
\begin{equation}
\label{eq:Pf_P_Arcoya_4}
|\Omega_n(h)| \leq \frac{C_4}{(h-k)^{p^*}} |\Omega_n(k)|^{\frac{p^*}{(p-1) (p^*)'}}.
\end{equation}

Let $\varepsilon>0$ be fixed. We consider the distribution function of $u_n$ defined as $\Theta_n(k) = |\Omega_n(k)|$ for every $k\geq 0$. As $\Theta_n$ is nonincreasing, thanks to relation~\eqref{eq:Pf_P_Arcoya_4} we can apply Stampacchia's Lemma~\ref{lem:Stamp} with $k_0=\varepsilon$, $a=p^*$, $c=C_4$ and $b=\frac{p^*}{(p-1)(p^*)'} = \frac{N(p-1)+p}{(p-1)(N-p)}>1$ to obtain, for every $n\in \N$, that
\begin{equation}
\label{eq:Pf_P_Arcoya_5}
\Theta_n(\varepsilon+d_n) = |\Omega_n(\varepsilon+d_n)| = 0, \text{ with } d_n^a = c2^\frac{ab}{b-1} |\Omega_n(\varepsilon)|^{b-1}.
\end{equation}
Notice that, by the definition of $\Omega_n(k)$, for any $n\in\N$ we have that
\[
\varepsilon |\Omega_n(\varepsilon)| \leq \int_{\Omega_n(\varepsilon)} u_n \leq \|u_n\|_{\luo}.
\]
Since $u_n$ converges to 0 strongly in $\luo$, we can find $n_0\in \N$ (depending on $\varepsilon$) such that $d_n<\varepsilon$ for any $n\geq n_0$. As $\Theta_n$ is nonincreasing, from~\eqref{eq:Pf_P_Arcoya_5} we deduce that $|\Omega_n(2\varepsilon)|=0$ for all $n\geq n_0$ or, equivalently, that 
\[
\|u_n\|_\co \leq 2\varepsilon, \ \forall n\geq n_0.
\]
Hence, we conclude that $u_n\to 0$ strongly in $\co$, which contradicts the fact that $\|u_n\|_\co \in [\alpha,\beta]$ for every $n\in\N$.
\end{proof}

All the results established in this section reduce Theorem~\ref{th:Lfrak} to a mere corollary. Its proof is given below.

\begin{proof}[Proof of Theorem~\ref{th:Lfrak}]
Thanks to Proposition~\ref{prop:rangeL} and Proposition~\ref{prop:infty}, it only remains for us to show that $\overline{\lambda}_{\rm min}(L)\to 0$ as $L\to -\infty$. Given any $\overline{\lambda}>0$, by Proposition~\ref{prop:hess}, we can always find some $\overline{L}\in\R$ such that, for any $L<\overline{L}$ and any $\lambda>\overline{\lambda}$, a nonnegative solution $u\in \cuo$ to~\eqref{eq:PbL} with $\|u\|_\co \in (\alpha,\beta]$ exists. Then, by the definition of $\overline{\lambda}_{\rm min}(L)$, we have that
\[
\overline{\lambda}_{\rm min}(L) \leq \overline{\lambda}, \ \forall L<\overline{L}.
\]
Thus, we obtain that $\overline{\lambda}_{\rm min}(L)\to 0$ as $L\to -\infty$.
\end{proof}

To conclude, we fix $\lambda>0$ and we show that maximal solutions to~\eqref{eq:PbL} in $[0,\beta]$ exist when $L$ is less than a constant. Furthermore, we analyse their behaviour as $L\to -\infty$.

\begin{proof}[Proof of Theorem~\ref{th:conv_beta}]
Let $\lambda>0$ fixed. As a consequence of Proposition~\ref{prop:hess}, we can find some $L_\lambda \in \R$ such that for every $L<L_\lambda$ problem~\eqref{eq:PbL} has a nonnegative solution $u_L\in \cuo$ with $\co$-norm in $(\alpha,\beta]$. Since $\beta$ is a supersolution, by~\cite[Theorem~4.2]{BMP} there is a maximal solution $\overline u_L\in \cuo$ to~\eqref{eq:PbL} in the interval $[0,\beta]$ such that $u_L\leq \overline{u}_L \leq \beta$.

Finally, thanks to Proposition~\ref{prop:hess}, for any $\varepsilon>0$ we can find some $\overline{L}< L_\lambda$ such that $\|\overline u_L\|_\co \in (\beta-\varepsilon, \beta]$ for any $L<\overline{L}$. Then, we conclude that $\|\overline{u}_L\|_{\co} \to \beta \text{ as } L\to -\infty$.
\end{proof}

\begin{remark}
All the results in this section also hold when $Lg(u)$ in problem~\eqref{eq:PbL_Aux} is replaced by $Lg_1(u)+g_2(u)$, where $g_1\in C(\R)$ is positive and $g_2\in C(\R)$ may change sign.
\end{remark}

\section{Generalizations and further results}
\label{sec:further}

\subsection{A general divergence problem}
\label{sec:equivalent}

All the results of this work also hold for the more general problem
\begin{equation}
\label{eq:PbExtension}
\left\{ \begin{alignedat}{2}
-\operatorname{div} (a(u) |\nabla u|^{p-2} \nabla u) + g(u) |\nabla u|^p &= \lambda f(u) \quad &&\mbox{in} \;\; \Omega, \\
u &= 0 \quad &&\mbox{on} \;\; \partial\Omega.
\end{alignedat}
\right.
\end{equation}
Here, $f\in C(\R)$ satisfies $f(s)>0$ for all $s\in (\alpha,\beta)$, where $0<\alpha<\beta$ are two zeros of $f$; $g\in C(\R)$; and $a\in C(\R)$ is such that $a(s)>0$ for all $s\in \R$. Observe that for $a\equiv 1$ one recovers the usual $p$-Laplacian operator. Although the proofs are similar, problem~\eqref{eq:PbExtension} presents some differences with respect to~\eqref{eq:PbCV}.  For~\eqref{eq:PbExtension}, the function $\Psi$ used for the change of variables (cf.~\eqref{eq:def_Psi}) is
\[
\Psi(s) \coloneqq \int_0^s a(\eta)^\frac{1}{p-1} e^{-\frac{1}{p-1}\int_0^\eta \frac{g(\sigma)}{a(\sigma)} \, \mathrm{d}\sigma} \ \mathrm{d}\eta,\ \forall s\in \R.
\]
The function $\tilde f$ defined in~\eqref{eq:def_tilde_f} also changes. Indeed, Proposition~\ref{prop:equiv} still holds for~\eqref{eq:PbExtension}, but now, if $u$ is a solution to~\eqref{eq:PbExtension}, then $v=\Psi(u)$ solves~\eqref{eq:PbSemiCV} with $\tilde f$ given by
\[
\tilde{f}(s) \coloneqq f\left(\Psi^{-1}(s) \right) e^{-\int_0^{\Psi^{-1}(s)} \frac{g(\sigma)}{a(\sigma)} \, \mathrm{d}\sigma},\ \forall s\in \operatorname{Dom}\big(\Psi^{-1} \big).
\]
As these two functions change, so does the area condition. Arguing as in Theorem~\ref{th:existence}, one can show that the area condition for~\eqref{eq:PbExtension} is
\begin{equation}
\label{eq:ac_extension_1}
    \int_{s}^\beta f(\eta) a(\eta)^\frac{1}{p-1} e^{-\frac{p}{p-1}\int_0^\eta \frac{g(\sigma)}{a(\sigma)} \, \mathrm d\sigma} \ \mathrm d \eta > 0,\ \forall s\in [0,\beta).
\end{equation}
Note that when $a\equiv 1$ the condition obtained coincides with~\eqref{eq:area_cond}. The following result generalizes Theorem~\ref{th:existence}.

\begin{theorem}
Let $f\in C(\R)$ satisfy $f(s)>0$ for $s\in (\alpha,\beta)$, where $0<\alpha<\beta$ are two zeros of $f$, and let $a, g\in C(\R)$ with $a$ positive. Then the following holds:
\begin{enumerate}[i)]
    \item If $f$ verifies~\eqref{eq:ac_extension_1} and $f(0)\geq 0$, then there is some $\overline{\lambda}>0$ such that, for every $\lambda> \overline{\lambda}$, problem~\eqref{eq:PbExtension} has a nonnegative solution $u\in\cuo$ with $\|u\|_\co \in (\alpha,\beta]$. 
    \item If $f$ does not satisfy~\eqref{eq:ac_extension_1}, then problem~\eqref{eq:PbExtension} admits no nonnegative solution with maximum in $[\alpha,\beta]$ for any $\lambda>0$.
\end{enumerate}
\end{theorem}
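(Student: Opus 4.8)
The plan is to reduce the statement to the existence and nonexistence results for the semilinear $p$-Laplacian problem~\eqref{eq:PbSemiCV}, exactly as in the proof of Theorem~\ref{th:existence}, by means of the analogue of Proposition~\ref{prop:equiv} for~\eqref{eq:PbExtension}. The first and most substantial step is therefore to establish this equivalence. Given a solution $u\in\cuo$ to~\eqref{eq:PbExtension} and a test function $\phi\in\sob\cap\lio$, I would insert $m(u)\,\phi$ into the weak formulation, where $m(u)\coloneqq e^{-\int_0^u \frac{g(\sigma)}{a(\sigma)}\,\mathrm d\sigma}$. Since $a$ is continuous and positive and $u$ is bounded, $\int_0^u \frac{g}{a}$ is bounded, so $m(u)\,\phi\in\sob\cap\lio$ and the choice is admissible. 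Expanding $\nabla(m(u)\phi)$, the identity $m'(u)=-\frac{g(u)}{a(u)}m(u)$ is precisely what cancels the natural growth term $\into g(u)|\nabla u|^p\phi$ against the part of the diffusion acting on $m$. What remains is $\into a(u)m(u)|\nabla u|^{p-2}\nabla u\cdot\nabla\phi=\lambda\into f(u)m(u)\phi$, and since $\Psi'(u)^{p-1}=a(u)e^{-\int_0^u \frac{g}{a}}=a(u)m(u)$ one has $|\nabla\Psi(u)|^{p-2}\nabla\Psi(u)=a(u)m(u)|\nabla u|^{p-2}\nabla u$. Hence $v=\Psi(u)$ solves~\eqref{eq:PbSemiCV} with the stated $\tilde f$; the converse is obtained by reversing the computation with the multiplier $1/m$, just as in Proposition~\ref{prop:equiv}.

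Once the equivalence is in place, I would record the structural properties of $\tilde f$. Because $\Psi$ is an increasing $C^1$ bijection with $\Psi(0)=0$ and the factors $a^{\frac{1}{p-1}}$ and $e^{-\int \frac{g}{a}}$ are strictly positive, $\tilde f$ has exactly the two consecutive zeros $\Psi(\alpha)$ and $\Psi(\beta)$, is positive between them, and satisfies $\tilde f(0)\geq 0$ if and only if $f(0)\geq 0$. Moreover $\Psi$ maps $[0,\beta]$ onto $[0,\Psi(\beta)]$, so solutions with $\co$-norm in $[\alpha,\beta]$ correspond bijectively to those with $\co$-norm in $[\Psi(\alpha),\Psi(\beta)]$. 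The key point is that the area condition~\eqref{eq:ac_extension_1} for $f$ is equivalent to the usual area condition $\int_s^{\Psi(\beta)}\tilde f(\eta)\,\mathrm d\eta>0$ for all $s\in[0,\Psi(\beta))$; this follows from the substitution $\rho=\Psi^{-1}(\eta)$, under which $\tilde f(\eta)\,\mathrm d\eta=f(\rho)\,a(\rho)^{\frac{1}{p-1}}e^{-\frac{p}{p-1}\int_0^\rho \frac{g}{a}}\,\mathrm d\rho$, reproducing the integrand of~\eqref{eq:ac_extension_1}. This is the exact analogue of the computation~\eqref{eq:Pf_Th_ex_2}.

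With these two reductions the proof closes verbatim as in Theorem~\ref{th:existence}. For part i), the area condition~\eqref{eq:ac_extension_1} together with $f(0)\geq 0$ translates into the area condition for $\tilde f$ and $\tilde f(0)\geq 0$; then \cite[Theorem~1.1]{Ng-Sch} yields some $\overline\lambda>0$ such that~\eqref{eq:PbSemiCV} admits a nonnegative solution $v$ with $\|v\|_\co\in[\Psi(\alpha),\Psi(\beta)]$ for every $\lambda>\overline\lambda$, and the equivalence returns a nonnegative solution $u=\Psi^{-1}(v)\in\cuo$ to~\eqref{eq:PbExtension} with $\|u\|_\co\in(\alpha,\beta]$. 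For part ii), failure of~\eqref{eq:ac_extension_1} means the area condition for $\tilde f$ fails, so \cite[Theorem~3.1]{Ng-Sch} forbids any nonnegative solution of~\eqref{eq:PbSemiCV} with maximum in $[\Psi(\alpha),\Psi(\beta)]$, and the equivalence transfers this nonexistence back to~\eqref{eq:PbExtension}.

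I expect the only genuine care to be in the first step, the generalized equivalence, where one must verify both that the multiplier $m(u)=e^{-\int_0^u \frac{g}{a}}$ annihilates the gradient term and that $a(u)m(u)=\Psi'(u)^{p-1}$ reconstitutes the $p$-Laplacian of $v$, and that every multiplied test function genuinely lies in $\sob\cap\lio$. A minor additional point is that, with $a$ only continuous, $\Psi$ is merely $C^1$ rather than $C^2$; this regularity is nonetheless enough for the change of variables, and, via the regularity theory of~\cite{Lieb}, solutions still belong to $\cuo$.
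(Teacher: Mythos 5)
Your proposal is correct and follows essentially the same route as the paper: the paper itself only sketches this theorem in Section~\ref{sec:equivalent}, saying it holds ``arguing as in Theorem~\ref{th:existence}'' with the generalized change of variables $\Psi(s)=\int_0^s a(\eta)^{\frac{1}{p-1}} e^{-\frac{1}{p-1}\int_0^\eta \frac{g}{a}}\,\mathrm d\eta$ and the transformed nonlinearity $\tilde f$, and your argument fills in exactly those details (the multiplier $e^{-\int_0^u \frac{g}{a}}$ cancelling the gradient term, the identity $\Psi'(u)^{p-1}=a(u)e^{-\int_0^u \frac{g}{a}}$, the substitution $\rho=\Psi^{-1}(\eta)$ matching~\eqref{eq:ac_extension_1}, and the appeal to the results of~\cite{Ng-Sch}). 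The computations you outline are accurate, so this is a faithful completion of the paper's intended proof rather than a different approach.
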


In the same way, Theorem~\ref{th:Lfrak} and Theorem~\ref{th:conv_beta} also hold for~\eqref{eq:PbExtension} if one includes a parameter $L$ multiplying the gradient term. One of the keys is that, since $g/a$ is positive (because $g$ is assumed to be positive), the function $\int_0^s \frac{g(\eta)}{a(\eta)} \ \mathrm{d}\eta$ is increasing for any $s\in \R$. This allows us to replicate all the arguments of Section~\ref{sec:regularizing}.

An important particular case of problem~\eqref{eq:PbExtension} is when $g=\frac{1}{p}a'$. In this case, problem~\eqref{eq:PbExtension} has a variational structure and its associated Euler-Lagrange functional is
\[
J(u) = \frac{1}{p}\into a(u) |\nabla u|^p - \into F(u),\ \forall u\in\sob,
\]
where $F(s) = \int_0^s f(\eta)\ \mathrm{d}\eta$ for any $s\in \R$. This functional is well defined because, since we are only interested in solutions contained in $[0,\beta]$, one can assume without loss of generality that $a$ is bounded. Here, the area condition obtained by substituting in~\eqref{eq:ac_extension_1} is
\begin{equation}
\label{eq:ac_extension_2}
\int_{s}^\beta f(\eta) \ \mathrm d \eta > 0,\ \forall s\in [0,\beta).
\end{equation}
Therefore, in this case the usual area condition is recovered. In fact, assuming~\eqref{eq:ac_extension_2}, the variational arguments of~\cite[Lemma~2.1]{Cl-Sw} can be replicated to show the first part of Theorem~\ref{th:existence} without making a change of variables.

In this family of variational problems, we also find the celebrated Schr\"odinger operator (\cite{Schrod-1}). In its most basic form, it takes the shape $Lu = -\Delta u - u \Delta (u^2)$. For the $p$-Laplacian, one can consider its extension
\begin{equation*}
\left\{ \begin{alignedat}{2}
-\Delta_p u - \frac{\kappa}{2} |u|^{\kappa-2} u \Delta_p (|u|^\kappa) &= \lambda f(u) \quad &&\mbox{in} \;\; \Omega, \\
u &= 0 \quad &&\mbox{on} \;\; \partial\Omega,
\end{alignedat}
\right.
\end{equation*}
where $\kappa\geq 1+\frac{1}{p}$. This problem can be rewritten as~\eqref{eq:PbExtension} with $a(s) = 1 + \frac{\kappa^p}{2} |s|^{p(\kappa-1)}$ and $g=\frac{1}{p}a'$. Its associated functional is
\[
J(u) = \frac{1}{p}\into \left( 1 + \frac{\kappa^p}{2} |u|^{p(\kappa-1)} \right) |\nabla u|^p - \into F(u),\ \forall u\in\sob,
\]
and the area condition is~\eqref{eq:ac_extension_2}. Finally, we point out that our work generalizes some of the results of~\cite{dS-Sil}, where the authors establish Theorem~\ref{th:existence} for the classical Schr\"odinger operator $Lu = -\Delta u - u \Delta (u^2)$.

\subsection{More general first order terms}

Our results can be applied to establish the existence of solutions for non-autonomous problems. Consider the problem
\begin{equation}
\label{eq:PbNoCV}
\left\{ \begin{alignedat}{2}
-\Delta_p u + \tilde g(x,u) |\nabla u|^p &= \lambda f(u) \quad &&\mbox{in} \;\; \Omega, \\
u &= 0 \quad &&\mbox{on} \;\; \partial\Omega.
\end{alignedat}
\right.
\end{equation}
We stress that, for this problem, no change of variables is available. Assume that $\tilde g\colon \Omega\times \R \to \R$ is a Carath\'eodory function (that is, measurable with respect to $x$ for every $s\in \R$, and continuous with respect to $s$ for almost every $x\in\Omega$) such that
\begin{equation}
\label{eq:hyp_tilde_g_1}
-M \leq \tilde g(x,s) \leq g(s),\ \forall (x,s)\in \Omega \times \R,
\end{equation}
where $M>0$ and $g\in C(\R)$.

If $u$ is a nonnegative solution to~\eqref{eq:PbCV} with $\|u\|_\co \in [\alpha,\beta]$, then $u$ is a subsolution to~\eqref{eq:PbNoCV}. A sub-supersolution argument (\cite{BMP}) with $\beta$ as a supersolution yields the existence of a solution $\tilde u$ to~\eqref{eq:PbNoCV} such that $u\leq \tilde u \leq \beta$. Hence, as a consequence of Theorem~\ref{th:existence}, we can state the following result.

\begin{theorem}
Let $f\in C(\R)$ satisfy $f(s)>0$ for $s\in (\alpha,\beta)$, where $0<\alpha<\beta$ are two zeros of $f$, and let $\tilde g(x,s)$ be a Carath\'eodory function satisfying~\eqref{eq:hyp_tilde_g_1}. If $f(s)$ verifies~\eqref{eq:area_cond} and $f(0)\geq 0$, then there is some $\overline{\lambda}>0$ such that, for every $\lambda> \overline{\lambda}$, problem~\eqref{eq:PbNoCV} has a nonnegative solution $\tilde u\in\cuo$ with $\|\tilde u\|_\co \in (\alpha,\beta]$. 
\end{theorem}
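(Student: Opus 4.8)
The plan is to obtain the result directly from Theorem~\ref{th:existence} by a sub-supersolution argument, exploiting the one-sided bound in~\eqref{eq:hyp_tilde_g_1}. Since $f$ satisfies~\eqref{eq:area_cond} and $f(0)\geq 0$, I would first apply item~i) of Theorem~\ref{th:existence} to the \emph{autonomous} problem~\eqref{eq:PbCV} associated with the continuous upper bound $g$ appearing in~\eqref{eq:hyp_tilde_g_1}. This produces some $\overline\lambda>0$ such that, for every $\lambda>\overline\lambda$, there is a nonnegative solution $u\in\cuo$ to~\eqref{eq:PbCV} with $\|u\|_\co\in(\alpha,\beta]$; this $\overline\lambda$ will serve as the threshold in the statement.

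Next I would check that this $u$ is a subsolution to~\eqref{eq:PbNoCV} and that the constant $\beta$ is a supersolution. For the former, given $\varphi\in\sob\cap\lio$ with $\varphi\geq 0$, the pointwise inequality $\tilde g(x,u)\leq g(u)$ yields
\[
\into|\nabla u|^{p-2}\nabla u\nabla\varphi+\into\tilde g(x,u)|\nabla u|^p\varphi\leq\into|\nabla u|^{p-2}\nabla u\nabla\varphi+\into g(u)|\nabla u|^p\varphi=\lambda\into f(u)\varphi,
\]
the last equality being the weak formulation of~\eqref{eq:PbCV}; together with $u=0$ on $\partial\Omega$, this is exactly the subsolution inequality for~\eqref{eq:PbNoCV}. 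For the latter, the gradient of the constant $\beta$ vanishes, so the left-hand side of the supersolution inequality is $0$, while $f(\beta)=0$ forces the right-hand side to be $0$ as well, and $\beta\geq 0$ on $\partial\Omega$. Finally, since $u\geq 0$ and $\|u\|_\co\leq\beta$, we have the ordering $u\leq\beta$ in $\overline\Omega$.

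With an ordered subsolution-supersolution pair at hand, I would invoke the sub-supersolution method for quasilinear problems with natural growth of~\cite{BMP} to produce a solution $\tilde u$ of~\eqref{eq:PbNoCV} with $u\leq\tilde u\leq\beta$ in $\Omega$. The regularity $\tilde u\in\cuo$ then follows from the boundedness of $\tilde u$ and the regularity theory of~\cite{Lieb}: on the range $[0,\beta]$ the coefficient $\tilde g$ is bounded (between $-M$ and $\max_{[0,\beta]}g$) and $f$ is bounded, so the required structure conditions hold uniformly. The claimed localisation of the norm is immediate from the ordering: $\tilde u\geq u$ gives $\|\tilde u\|_\co\geq\|u\|_\co>\alpha$, while $\tilde u\leq\beta$ gives $\|\tilde u\|_\co\leq\beta$, whence $\|\tilde u\|_\co\in(\alpha,\beta]$.

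The only delicate point, more a verification than a genuine obstacle, is that the scheme of~\cite{BMP} must apply despite $\tilde g$ being merely Carath\'eodory in $x$ and possibly sign-changing, and despite the natural growth $|\nabla u|^p$ of the gradient term. This is precisely the framework treated in~\cite{BMP}, and the two-sided control in~\eqref{eq:hyp_tilde_g_1} is exactly what is needed: the upper bound makes $u$ a subsolution, while the lower bound $-M$ supplies the coercivity and the a priori estimates required for the ordered iteration and for the uniform regularity on $[0,\beta]$.
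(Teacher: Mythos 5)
Your proposal is correct and follows essentially the same route as the paper: the paper likewise applies Theorem~\ref{th:existence} to the autonomous problem~\eqref{eq:PbCV} with the majorant $g$ from~\eqref{eq:hyp_tilde_g_1}, observes that the resulting solution $u$ is a subsolution to~\eqref{eq:PbNoCV} while the constant $\beta$ is a supersolution, and invokes the sub-supersolution method of~\cite{BMP} to obtain $\tilde u$ with $u\leq\tilde u\leq\beta$, whence $\|\tilde u\|_\co\in(\alpha,\beta]$. Your write-up simply makes explicit the verifications (the weak subsolution inequality, the regularity via~\cite{Lieb}, and the norm localisation) that the paper leaves implicit.
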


The regularizing effect is also present in this case. Let us introduce, as in~\eqref{eq:PbL}, a parameter $L\in \R$ multiplying the gradient term, so that we have
\begin{equation}
\label{eq:PbNoCV_L}
\left\{ \begin{alignedat}{2}
-\Delta_p u + L\tilde g(x,u) |\nabla u|^p &= \lambda f(u) \quad &&\mbox{in} \;\; \Omega, \\
u &= 0 \quad &&\mbox{on} \;\; \partial\Omega.
\end{alignedat}
\right.
\end{equation}
In this case, we assume the existence of $M_1,M_2>0$ such that
\begin{equation}
\label{eq:hyp_tilde_g_2}
M_1 \leq \tilde g(x,s) \leq M_2,\ \forall (x,s)\in \Omega \times \R.
\end{equation}
Observe that the maximal solutions to~\eqref{eq:PbCV} with $g\equiv M_1$ given by Theorem~\ref{th:conv_beta} are subsolutions to~\eqref{eq:PbNoCV_L} provided $L<0$. Using again the sub-supersolution method (\cite{BMP}) with $\beta$ as supersolution, one obtains the following.

\begin{theorem}
Let $f\in C(\R)$ satisfy $f(0)\geq 0$ and $f(s)>0$ for $s\in (\alpha,\beta)$, where $0<\alpha<\beta$ are two zeros of $f$, and let $\tilde g(x,s)$ be a Carath\'eodory function satisfying~\eqref{eq:hyp_tilde_g_2}. Given $\lambda>0$, there is some $L_\lambda<0$ such that the maximal solution $\overline u_L$ in the interval $[0,\beta]$ to problem~\eqref{eq:PbNoCV_L} exists for every $L\leq L_\lambda$, and
\[
\|\overline{u}_L\|_{\co} \to \beta \text{ as } L\to -\infty.
\]
\end{theorem}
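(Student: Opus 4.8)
The plan is to reduce the non-autonomous problem~\eqref{eq:PbNoCV_L} to the autonomous problem~\eqref{eq:PbL} with $g$ replaced by the constant $M_1$, for which Theorem~\ref{th:conv_beta} already provides the desired asymptotics, and then to transfer the conclusion by comparison. Fix $\lambda>0$. Since $M_1>0$, the constant $g\equiv M_1$ is a positive continuous function, so Theorem~\ref{th:conv_beta} furnishes some $L_\lambda<0$ such that, for every $L\leq L_\lambda$, the autonomous problem~\eqref{eq:PbL} with $g\equiv M_1$ admits a maximal solution $w_L\in\cuo$ in $[0,\beta]$ with $\|w_L\|_\co\to\beta$ as $L\to-\infty$.

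First I would verify that, for $L<0$, each such $w_L$ is a subsolution to~\eqref{eq:PbNoCV_L}. This is the single step that uses both the sign of $L$ and the one-sided bound in~\eqref{eq:hyp_tilde_g_2}: since $w_L$ satisfies $-\Delta_p w_L + L M_1|\nabla w_L|^p = \lambda f(w_L)$ and $\tilde g(x,w_L)\geq M_1$, multiplying by $L<0$ reverses the inequality to $L\tilde g(x,w_L)\leq L M_1$, so that
\[
-\Delta_p w_L + L\tilde g(x,w_L)|\nabla w_L|^p \leq -\Delta_p w_L + L M_1|\nabla w_L|^p = \lambda f(w_L).
\]
In parallel I would note that the constant $\beta$ is a supersolution of~\eqref{eq:PbNoCV_L}, its $p$-Laplacian and gradient terms vanishing while $f(\beta)=0$, and that $0$ is a subsolution because $f(0)\geq 0$, which keeps the maximal solution nonnegative.

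With the subsolution $w_L\leq\beta$ lying below the supersolution $\beta$, I would invoke the sub-supersolution method~\cite[Theorem~4.2]{BMP} to produce, for every $L\leq L_\lambda$, a maximal solution $\overline u_L\in\cuo$ to~\eqref{eq:PbNoCV_L} in $[0,\beta]$; since there is a solution above $w_L$, maximality yields $w_L\leq\overline u_L\leq\beta$ pointwise in $\Omega$. The convergence then follows by squeezing: from $\|w_L\|_\co\leq\|\overline u_L\|_\co\leq\beta$ together with $\|w_L\|_\co\to\beta$ one gets $\|\overline u_L\|_\co\to\beta$ as $L\to-\infty$. I expect the only genuinely delicate point to be the subsolution verification, where the reversal of the inequality under multiplication by the negative factor $L$ must be correctly paired with $\tilde g\geq M_1$; everything else is a routine transfer from the autonomous theorem via comparison.
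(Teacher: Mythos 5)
Your proof is correct and takes essentially the same route as the paper: the paper likewise uses the maximal solutions of~\eqref{eq:PbL} with $g\equiv M_1$ supplied by Theorem~\ref{th:conv_beta}, observes that for $L<0$ they are subsolutions of~\eqref{eq:PbNoCV_L} precisely because $\tilde g(x,s)\geq M_1$ forces $L\tilde g(x,s)\leq LM_1$, and then applies the sub-supersolution method of~\cite{BMP} with $\beta$ as supersolution to obtain the maximal solution in $[0,\beta]$ and the convergence $\|\overline{u}_L\|_\co\to\beta$ by comparison.
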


\subsection{Nonexistence of solutions with maximum equal to \texorpdfstring{$\beta$}{beta}}
\label{sec:smp}

In Theorem~\ref{th:existence}, we establish the existence of nonnegative solutions $u\in\cuo$ to~\eqref{eq:PbCV} whose maximum belongs to $(\alpha,\beta]$. Our aim here is to discuss conditions under which $\|u\|_\co \neq \beta$.

As shown in Proposition~\ref{prop:equiv}, problem~\eqref{eq:PbCV} is closely related to the problem without the gradient term~\eqref{eq:PbSemi}. If $f$ satisfies
\begin{equation}
    \label{eq:hyp_smp}
    f(s) \leq M(\beta-s)^{p-1},\ \forall s\in [0,\beta],
\end{equation}
for some $M>0$, or equivalently
\begin{equation}
    \label{eq:hyp_smp_2}
    \limsup_{s\to \beta^-} \frac{f(s)}{(\beta-s)^{p-1}} <\infty,
\end{equation}
then the strong maximum principle precludes the existence of any solution $u$ to~\eqref{eq:PbSemi} with $\|u\|_\co =\beta$. Although we believe this result is well known, we include here its proof for the sake of completeness.

\begin{lemma}
\label{lem:smp}
    Let $f\in C(\R)$ satisfy~\eqref{eq:hyp_smp} and let $\lambda>0$. Then there is no nonnegative solution $u\in \cuo$ to~\eqref{eq:PbSemi} with $\|u\|_\co = \beta$.
\end{lemma}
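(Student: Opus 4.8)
The plan is to argue by contradiction via Vázquez's strong maximum principle (\cite{Vazquez}) applied to the ``deficit'' function $w := \beta - u$. Suppose $u\in\cuo$ is a nonnegative solution to~\eqref{eq:PbSemi} with $\|u\|_\co=\beta$. Since $u\geq 0$, this means $\max_{\overline\Omega} u=\beta$, and since $u=0$ on $\partial\Omega$ while $\beta>0$, the maximum is attained at some interior point $x_0\in\Omega$, so $u(x_0)=\beta$. Setting $w:=\beta-u\in\cuo$, we have $w\geq 0$ on $\overline\Omega$, $w=\beta>0$ on $\partial\Omega$, and $w(x_0)=0$; in particular $w$ is not identically zero yet vanishes at the interior point $x_0$.

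The next step is to convert the equation for $u$ into a differential inequality for $w$. Because $\nabla w=-\nabla u$, one has $|\nabla w|^{p-2}\nabla w=-|\nabla u|^{p-2}\nabla u$, so testing the weak formulation of~\eqref{eq:PbSemi} against $\varphi$ yields $\into |\nabla w|^{p-2}\nabla w\,\nabla\varphi=-\lambda\into f(u)\varphi$; that is, $\Delta_p w=\lambda f(u)$ in the weak sense. Invoking the growth hypothesis~\eqref{eq:hyp_smp}, namely $f(s)\leq M(\beta-s)^{p-1}$ for $s\in[0,\beta]$, together with $u=\beta-w\in[0,\beta]$, gives $\lambda f(u)\leq \lambda M\, w^{p-1}$. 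Hence $w$ is a nonnegative weak supersolution of
\begin{equation*}
-\Delta_p w + \lambda M\, w^{p-1} \geq 0 \quad \text{in } \Omega.
\end{equation*}

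Finally, I would invoke the strong maximum principle for the $p$-Laplacian (\cite{Vazquez}) for the inequality $-\Delta_p w + c\, w^{p-1}\geq 0$ with the constant $c=\lambda M\geq 0$. The key point is that the zero-order term $s\mapsto c\,s^{p-1}$ is exactly of the borderline type covered by Vázquez's theorem: its integrability condition reduces, for this critical power, to the divergence of $\int_{0^+}\frac{\mathrm ds}{s}$, so it is satisfied. The theorem then forces a nonnegative solution of this inequality to be either identically zero or strictly positive throughout $\Omega$. Since $w(x_0)=0$ at the interior point, we would conclude $w\equiv 0$ in $\Omega$, contradicting $w=\beta>0$ on $\partial\Omega$. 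This contradiction shows that no nonnegative solution with $\|u\|_\co=\beta$ can exist.

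The main obstacle is applying Vázquez's result in the correct regularity and weak-inequality framework: one must confirm that $w\in C^1(\overline\Omega)$ (immediate from $u\in\cuo$), that the differential inequality holds in the appropriate distributional sense with the right sign, and that the borderline integrability condition for the power nonlinearity $c\,s^{p-1}$ holds—precisely the critical case in which the strong maximum principle remains valid. Everything else is routine.
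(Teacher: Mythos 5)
Your proof is correct and follows essentially the same route as the paper: both define the deficit function $\beta-u$, use~\eqref{eq:hyp_smp} to show it is a nonnegative supersolution of $-\Delta_p v + \lambda M v^{p-1} \geq 0$ in $\Omega$, and invoke V\'azquez's strong maximum principle to conclude that this function is either identically zero or strictly positive, yielding a contradiction either way. Your explicit check of the borderline integrability condition for the critical power $s^{p-1}$ is a detail the paper leaves implicit, but it is the same argument.
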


\begin{proof}
Suppose that there exists a nonnegative solution $u\in\cuo$ to~\eqref{eq:PbSemi} with $\|u\|_\co = \beta$. Define $v\coloneq \beta-u$. Using~\eqref{eq:hyp_smp}, we obtain that $v\geq 0$ satisfies the problem
\begin{equation*}
\left\{ \begin{alignedat}{2}
-\Delta_p v + \lambda M v^{p-1}  &\geq 0 \quad &&\mbox{in} \;\; \Omega, \\
v &\geq 0 \quad &&\mbox{on} \;\; \partial\Omega.
\end{alignedat}
\right.
\end{equation*}
Since $v\not\equiv 0$ (because $v=\beta$ on $\partial\Omega$), the strong maximum principle (\cite{Vazquez}) implies that $v>0$ in $\Omega$. This shows that $u<\beta$ in $\overline{\Omega}$, contradicting $\|u\|_\co = \beta$.
\end{proof}

The extension of this result to problem~\eqref{eq:PbCV} is not immediate. Taking advantage of Proposition~\ref{prop:equiv}, the strategy is to relate $f$ and $\tilde f$ (defined in~\eqref{eq:def_tilde_f}) through a condition like~\eqref{eq:hyp_smp_2}, and then apply Lemma~\ref{lem:smp} to problem~\eqref{eq:PbSemiCV}.

\begin{proposition}
Let $f, g\in C(\R)$. If $f$ satisfies~\eqref{eq:hyp_smp}, then there is no nonnegative solution $u\in \cuo$ to~\eqref{eq:PbCV} with $\|u\|_\co = \beta$.
\end{proposition}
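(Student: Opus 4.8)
The plan is to transfer the problem to the semilinear setting via Proposition~\ref{prop:equiv} and then invoke Lemma~\ref{lem:smp}. Suppose, for contradiction, that $u\in\cuo$ is a nonnegative solution to~\eqref{eq:PbCV} with $\|u\|_\co=\beta$. By Proposition~\ref{prop:equiv}, the function $v=\Psi(u)$ is a nonnegative solution to the semilinear problem~\eqref{eq:PbSemiCV} with nonlinearity $\tilde f$ given in~\eqref{eq:def_tilde_f}. Since $\Psi$ is increasing with $\Psi(0)=0$, one has $\|v\|_\co=\Psi(\|u\|_\co)=\Psi(\beta)$, and $\Psi(\beta)$ is a zero of $\tilde f$. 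It thus suffices to show that $\tilde f$ obeys a growth condition of the form~\eqref{eq:hyp_smp} near $\Psi(\beta)$ and to apply Lemma~\ref{lem:smp} to~\eqref{eq:PbSemiCV}, with the role of $\beta$ now played by $\Psi(\beta)$.

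The core step is to verify that~\eqref{eq:hyp_smp} is inherited by $\tilde f$. Writing $\rho=\Psi^{-1}(s)$, so that $s=\Psi(\rho)$, the definition~\eqref{eq:def_tilde_f} together with hypothesis~\eqref{eq:hyp_smp} gives
\[
\tilde f(s)=f(\rho)\,e^{-G(\rho)}\leq M(\beta-\rho)^{p-1}e^{-G(\rho)},\quad \forall s\in[0,\Psi(\beta)].
\]
Because $G$ is continuous, the derivative $\Psi'=e^{-\frac{1}{p-1}G}$ is bounded between two positive constants $c_1\leq\Psi'\leq c_2$ on the compact interval $[0,\beta]$. Hence, integrating $\Psi'$,
\[
\Psi(\beta)-s=\Psi(\beta)-\Psi(\rho)=\int_\rho^\beta\Psi'(\eta)\,\mathrm{d}\eta\geq c_1(\beta-\rho),
\]
so that $(\beta-\rho)^{p-1}\leq c_1^{-(p-1)}(\Psi(\beta)-s)^{p-1}$, using $p-1>0$. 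Since $e^{-G(\rho)}$ is also bounded on $[0,\beta]$, we obtain a constant $\tilde M>0$ with
\[
\tilde f(s)\leq \tilde M\,(\Psi(\beta)-s)^{p-1},\quad \forall s\in[0,\Psi(\beta)],
\]
which is exactly condition~\eqref{eq:hyp_smp} for $\tilde f$ with zero $\Psi(\beta)$.

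With this estimate in hand, Lemma~\ref{lem:smp} applied to~\eqref{eq:PbSemiCV} yields that no nonnegative solution $v\in\cuo$ with $\|v\|_\co=\Psi(\beta)$ can exist, contradicting the existence of $v=\Psi(u)$. Therefore no nonnegative solution $u$ to~\eqref{eq:PbCV} with $\|u\|_\co=\beta$ exists, as claimed. The only delicate point is the comparison between $\beta-\rho$ and $\Psi(\beta)-\Psi(\rho)$; everything else is a direct translation through the change of variables, and the positivity together with the continuity of $\Psi'$ on $[0,\beta]$ makes this comparison routine.
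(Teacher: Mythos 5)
Your proof is correct, and it follows the same overall decomposition as the paper: pass to the semilinear problem~\eqref{eq:PbSemiCV} via Proposition~\ref{prop:equiv}, show that $\tilde f$ inherits the growth condition at $\Psi(\beta)$, and conclude with Lemma~\ref{lem:smp}. Where you differ is in the verification of the inherited growth condition, and your route is the more elementary one. The paper works with the equivalent limsup formulation~\eqref{eq:hyp_smp_2} and uses Taylor's Theorem for $\Psi$ at $\beta$ to prove the sharper statement
\[
\limsup_{s\to \Psi(\beta)^-} \frac{\tilde f(s)}{(\Psi(\beta)-s)^{p-1}} \;=\; \limsup_{s\to \beta^-} \frac{f(s)}{(\beta-s)^{p-1}},
\]
i.e.\ the two quantities are exactly equal (the factors $e^{-G(\beta)}$ and $e^{G(\beta)}$ arising from $\Psi'(\beta)^{p-1}$ cancel precisely). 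You instead bound $\Psi'=e^{-\frac{1}{p-1}G}$ between two positive constants on the compact interval $[0,\beta]$ and integrate, obtaining $\Psi(\beta)-\Psi(\rho)\geq c_1(\beta-\rho)$ and hence the pointwise bound $\tilde f(s)\leq \tilde M(\Psi(\beta)-s)^{p-1}$ on all of $[0,\Psi(\beta)]$ with a non-sharp constant. This avoids the Taylor expansion and the limsup bookkeeping entirely; the price is only the loss of the exact constant, which is irrelevant for invoking Lemma~\ref{lem:smp}. Both arguments tacitly apply Lemma~\ref{lem:smp} with $\Psi(\beta)$ and $\tilde f$ in place of $\beta$ and $f$, which is legitimate since the lemma's proof only uses the bound on the range of the solution; your write-up makes this substitution explicit, which is a small point in its favour.
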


\begin{remark}
When $1<p\leq 2$, condition~\eqref{eq:hyp_smp} holds whenever $f(\beta)=0$ and $f$ is Lipschitz continuous in $[\beta-\delta,\beta]$ for some $\delta>0$.
\end{remark}

\begin{proof}
By Proposition~\ref{prop:equiv}, we are done if we prove that problem~\eqref{eq:PbSemiCV} has no nonnegative solution ${v\in\cuo}$ with $\|v\|_\co = \Psi(\beta)$. Thanks to Lemma~\ref{lem:smp}, it suffices to show that
\begin{equation}
    \label{eq:Pf_P_SMP_1}
    \limsup_{s\to \Psi(\beta)^-} \frac{\tilde f(s)}{(\Psi(\beta)-s)^{p-1}} <\infty.
\end{equation}
Taking into account the definition of $\tilde f$ (see~\eqref{eq:def_tilde_f}) and performing a change of variables, this limit can be rewritten as
\begin{equation}
    \label{eq:Pf_P_SMP_2}
    \limsup_{s\to \Psi(\beta)^-} \frac{\tilde f(s)}{(\Psi(\beta)-s)^{p-1}} = \limsup_{s\to \beta^-} \frac{f(s)e^{-G(s)}}{(\Psi(\beta)-\Psi(s))^{p-1}} = e^{-G(\beta)} \limsup_{s\to \beta^-} \frac{f(s)}{(\Psi(\beta)-\Psi(s))^{p-1}}.
\end{equation}
Now, we use Taylor's Theorem to write
\begin{equation*}
    \Psi(s) = \Psi(\beta) - \Psi'(\beta) (\beta-s) + o(\beta-s) = \Psi(\beta) - e^{-\frac{1}{p-1}G(\beta)} (\beta-s) + o(\beta-s), \ \forall s\in [\beta-\delta,\beta],
\end{equation*}
where $\delta>0$ and $o(\beta-s)$ is such that $\frac{o(\beta-s)}{\beta-s} \to 0$ as $s\to \beta$. Thus, we obtain that
\begin{equation}
    \label{eq:Pf_P_SMP_3}
    \begin{split}
    \limsup_{s\to \beta^-} \frac{f(s)}{(\Psi(\beta)-\Psi(s))^{p-1}} &= \limsup_{s\to \beta^-} \frac{f(s)}{\left( e^{-\frac{1}{p-1}G(\beta)} (\beta-s) - o(\beta-s) \right)^{p-1}}\\
    & = \limsup_{s\to \beta^-} \frac{f(s)}{(\beta-s)^{p-1}} \left( \frac{\beta-s}{e^{-\frac{1}{p-1}G(\beta)} (\beta-s) - o(\beta-s)} \right)^{p-1} \\[2mm]
    &= e^{G(\beta)}\limsup_{s\to \beta^-} \frac{f(s)}{(\beta-s)^{p-1}}.
    \end{split}
\end{equation}
Joining~\eqref{eq:Pf_P_SMP_2} and~\eqref{eq:Pf_P_SMP_3}, we deduce that
\[
\limsup_{s\to \Psi(\beta)^-} \frac{\tilde f(s)}{(\Psi(\beta)-s)^{p-1}} = \limsup_{s\to \beta^-} \frac{f(s)}{(\beta-s)^{p-1}}.
\]
Since~\eqref{eq:hyp_smp_2} holds (because it is equivalent to~\eqref{eq:hyp_smp}), it follows that~\eqref{eq:Pf_P_SMP_1} also holds, as desired.
\end{proof}

Finally, we note that when $f(s)\approx C(\beta-s)^{k-1}$ near $s=\beta$ for some $0<k<p-1$, the strong maximum principle does not apply and solutions to~\eqref{eq:PbCV} having a flat core may appear. These are solutions ${u\in\cuo}$ such that $\{x\in\Omega: u(x)=\beta\}\neq \emptyset$. We refer the reader to~\cite{Sabina, Guo2} for a more detailed explanation of this phenomenon.

\subsection{Regularizing effect as \texorpdfstring{$p\to 1^+$}{p goes to 1}}

Some of the results proved in Section~\ref{sec:regularizing} remain valid if we fix $L$ and we tend $p$ to 1. Observe that the functions $\Psi_L(s)$ (defined in~\eqref{eq:def_Psi_L}) and $H_{\gamma_1,\gamma_2}(s,L)$ (defined in~\eqref{eq:def_Hgamma}) can be rewritten as
\begin{equation*}
     \Psi_{p,L}(s) \coloneqq \int_0^s e^{-C_1(p,L) G(\eta)} \ \mathrm{d}\eta, \ \forall s\in \R,
\end{equation*}
and
\begin{equation*}
    H_{\gamma_1,\gamma_2}(s,p,L) \coloneqq \int_{s}^{\gamma_2} f(\eta) e^{-C_2(p,L) G(\eta)} \ \mathrm d \eta, \ \forall (s,L) \in [0,\gamma_1]\times \R,
\end{equation*}
where $C_1(p,L) = \frac{1}{p-1}L$ and $C_2(p,L) = \frac{p}{p-1}L$. The key is that if $L<0$, then $C_1(p,L)$ and $C_2(p,L)$ tend to $-\infty$ as $p\to 1^+$; whereas if $L>0$, then $C_1(p,L)$ and $C_2(p,L)$ tend to $\infty$ as $p\to 1^+$. This allows us to reproduce some of the arguments of Section~\ref{sec:regularizing}.

If we fix $L=-1$, then the following regularizing effect appears. The proof is analogous to that of Theorem~\ref{th:conv_beta}.

\begin{theorem}
Let $f\in C(\R)$ satisfy $f(0)\geq 0$ and $f(s)>0$ for $s\in (\alpha,\beta)$, where $0<\alpha<\beta$ are two zeros of $f$, and let $g\in C(\R)$ be positive. Given $\lambda>0$, there is some $p_\lambda>1$ such that the maximal solution $\overline u_p$ in the interval $[0,\beta]$ to problem
\begin{equation*}
\left\{ \begin{alignedat}{2}
-\Delta_p u &= g(u) |\nabla u|^p + \lambda f(u) \quad &&\mbox{in} \;\; \Omega, \\
u &= 0 \quad &&\mbox{on} \;\; \partial\Omega, 
\end{alignedat}
\right.
\end{equation*}
exists for every $p\in (1,p_\lambda]$, and
\[
\|\overline{u}_p\|_{\co} \to \beta \text{ as } p\to 1^+.
\]
\end{theorem}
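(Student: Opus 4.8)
The plan is to exploit that the problem in the statement is exactly \eqref{eq:PbL_Aux} with $L=-1$, so that the change of variables of Proposition~\ref{prop:equiv} (applied with $g$ replaced by $-g$) turns it into the semilinear problem $-\Delta_p v=\lambda\tilde f_p(v)$, whose nonlinearity carries the weight $e^{\frac{p}{p-1}G}$ already met in Section~\ref{sec:regularizing}. The engine behind every $L\to-\infty$ argument there is that the exponent of this weight is driven to $-\infty$; here the same effect is produced by letting $p\to1^+$, since $\frac{p}{p-1}(-1)\to-\infty$. Accordingly I would mirror the proof of Theorem~\ref{th:conv_beta}, reducing everything to a $p\to1^+$ analogue of Proposition~\ref{prop:hess}: for every $\delta>0$ there is $p_\delta>1$ such that, for $1<p<p_\delta$, the problem admits a nonnegative solution with $\co$-norm in $(\beta-\delta,\beta]$. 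Granting this, the argument of Theorem~\ref{th:conv_beta} applies verbatim, the maximal solution $\overline u_p$ dominating such a solution and hence obeying $\|\overline u_p\|_\co>\beta-\delta$.

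First I would settle the existence of $\overline u_p$ for $p$ close to $1$. The pertinent area condition is $\int_s^\beta f(\eta)e^{\frac{p}{p-1}G(\eta)}\,\mathrm d\eta>0$ for all $s\in[0,\beta)$; since $G$ is increasing and $f>0$ on $(\alpha,\beta)$, the weight concentrates near $\beta$, and (reasoning as in item i) of Lemma~\ref{lem:technical}, now with $p\to1^+$ in place of $L\to-\infty$) this holds for $p$ sufficiently close to $1$. Theorem~\ref{th:existence} then yields a nonnegative solution with norm in $(\alpha,\beta]$, and because $\beta$ is a supersolution, \cite[Theorem~4.2]{BMP} provides the maximal solution $\overline u_p$ in $[0,\beta]$.

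The decisive step is the convergence $\|\overline u_p\|_\co\to\beta$, for which I would construct, for each fixed $\delta>0$, a subsolution with a plateau at height $\beta-\delta$ filling most of $\Omega$. In the transformed variable the radial profile joining the plateau value $\Psi_{p,-1}(\beta-\delta)$ to $0$ satisfies the first integral $\tfrac{p-1}{p}|v'|^p=\lambda(\tilde F_p(\Psi_{p,-1}(\beta-\delta))-\tilde F_p(v))$, so its transition width equals $R_p=\int_0^{\beta-\delta} e^{\frac{1}{p-1}G(u)}\big(\tfrac{p\lambda}{p-1}\int_u^{\beta-\delta}f\,e^{\frac{p}{p-1}G}\big)^{-1/p}\,\mathrm du$. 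A Laplace-type estimate (Lemma~\ref{lem:taylor} with parameter $\tfrac1{p-1}$) shows that the integrand concentrates in a $u$-layer of width $\sim(p-1)$ just below $\beta-\delta$, whence $R_p\sim(p-1)/G'(\beta-\delta)\to0$ as $p\to1^+$. Thus for $p$ small the plateau fits inside $\{x:\operatorname{dist}(x,\partial\Omega)>R_p\}$; gluing the profile along the distance function and extending by $0$ produces a subsolution of the $L=-1$ problem, and the sub-/supersolution method with $\beta$ (cf.~\cite[Theorem~3.1]{BMP}) forces $\overline u_p\ge\beta-\delta$ somewhere. Letting $\delta\to0$ with $p\to1^+$ gives the claim.

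The main obstacle is precisely that this convergence \emph{cannot} be obtained by transporting the minimiser argument of Proposition~\ref{prop:hess} to $p\to1^+$: the sole estimate of Lemma~\ref{lem:technical} that fails to transfer is item iii). With $L=-1$ fixed the exponentials in $\Psi_{p,-1}(\gamma_2)^p$ and in $\underline h_{\gamma_1,\gamma_2}(p)$ match, but the surviving prefactor is $(p-1)^{p-1}\to1$ rather than the decaying $(-L)^{-(p-1)}$ of the $L\to-\infty$ regime, so $\Psi_{p,-1}(\gamma_2)^p/\underline h_{\gamma_1,\gamma_2}(p)\to 1/f(\gamma_2)>0$. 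Hence the energy bracket in \eqref{eq:Pf_P_Hess_2} no longer turns negative for free as $\gamma_2\to\beta$ (where $f(\gamma_2)\to0$) at fixed $\lambda$, which is why I would replace the minimiser by the ordering property of the maximal solution together with the shrinking-support subsolution above. The genuinely technical points are then the verification that $R_p\to0$ for the degenerate operator and the routine check that the distance-function gluing survives the boundary curvature for $p$ near $1$, the choice of plateau height $\beta-\delta<\beta$ being what circumvents the degeneracy $f(\beta)=0$.
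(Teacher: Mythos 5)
Your diagnosis of why the paper's ``analogous'' proof is delicate is correct, and it is the sharpest part of your proposal: with $L=-1$ fixed the only surviving prefactor in item iii) of Lemma~\ref{lem:technical} is $(p-1)^{p-1}\to 1$, so indeed $\Psi_{p,-1}(\gamma_2)^p/\underline{h}_{\gamma_1,\gamma_2}$ tends to a positive multiple of $1/f(\gamma_2)$ and the bracket in~\eqref{eq:Pf_P_Hess_2} turns negative only when $\lambda$ exceeds a threshold of order $c_0|\Omega|/(f(\gamma_2)|\Omega_0|)$. The gap is in the step you substitute for it. The assertion that the transition width satisfies $R_p\sim(p-1)/G'(\beta-\delta)\to 0$ is false. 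Your Laplace reasoning correctly localizes the numerator $e^{\frac{1}{p-1}G(u)}$ to a layer of width $\sim(p-1)$ below $\gamma:=\beta-\delta$, but it ignores that on that same layer the denominator $\bigl(\tfrac{p\lambda}{p-1}\int_u^{\gamma}f e^{\frac{p}{p-1}G}\bigr)^{1/p}$ vanishes like $(\gamma-u)^{1/p}$; the resulting integrable singularity contributes a Beta factor $B\bigl(\tfrac1p,1-\tfrac1p\bigr)=\pi/\sin(\pi/p)\sim\tfrac{p}{p-1}$, which cancels the $(p-1)$ exactly. Concretely, if $f\equiv f_0$ and $g\equiv g_0$ near $\gamma$ (the general case is identical to leading order with $f_0=f(\gamma)$, $g_0=g(\gamma)$), the substitution $y=e^{\frac{p}{p-1}g_0(u-\gamma)}$ gives the exact identity
\begin{equation*}
R_p=\Bigl(\tfrac{g_0}{\lambda f_0}\Bigr)^{1/p}\,\frac{p-1}{p\,g_0}\int_{e^{-\frac{p}{p-1}g_0\gamma}}^{1} y^{\frac1p-1}(1-y)^{-\frac1p}\,\mathrm dy\ \longrightarrow\ \frac{1}{\lambda f_0}\qquad\text{as } p\to 1^+,
\end{equation*}
so the layer width converges to $1/(\lambda f(\beta-\delta))$, not to $0$. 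This is precisely the mechanism that kills item iii): every gain in the $L\to-\infty$ regime is a power $(-L)^{-(p-1)}$ or $(-L)^{-\frac{p-1}{p}}$, and when the large parameter is $\tfrac1{p-1}$ itself these powers degenerate to $O(1)$.

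The consequence is that your plateau at height $\beta-\delta$ fits inside $\Omega$ only if $1/(\lambda f(\beta-\delta))$ is below the inradius, i.e.\ only for $\delta$ with $f(\beta-\delta)\gtrsim 1/\lambda$; since $f(\beta)=0$, at fixed $\lambda$ this bounds $\delta$ away from $0$ --- exactly the same threshold that you identified as blocking the variational route, so your fix runs into the wall it was designed to avoid and cannot yield $\|\overline u_p\|_\co\to\beta$. Moreover, the obstruction is intrinsic rather than a defect of the method. In dimension one every solution of the transformed problem obeys the first integral $\tfrac{p-1}{p}|v'|^p=\lambda\bigl(\widetilde F(v_{\max})-\widetilde F(v)\bigr)$, $\widetilde F$ being the primitive of the transformed nonlinearity, so the distance any solution needs in order to descend from a maximum at height $m$ through the layer $[m-(p-1),m]$ is at least
\begin{equation*}
p\,e^{-\max_{[0,\beta]} g}\,\bigl(p\lambda \max_{[m-(p-1),m]} f\bigr)^{-1/p}\ \longrightarrow\ \frac{e^{-\max_{[0,\beta]} g}}{\lambda\,\max_{[m-(p-1),m]} f}\qquad\text{as } p\to1^+,
\end{equation*}
which exceeds $\operatorname{diam}(\Omega)$ whenever $m$ is close enough to $\beta$ and $p$ is close to $1$, because $f(\beta)=0$. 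Hence, at fixed $\lambda$, solutions with maximum near $\beta$ do not exist at all for $p$ near $1$ when $N=1$: no subsolution, however clever, can be glued in, and the convergence $\|\overline u_p\|_\co\to\beta$ itself fails in that setting. The conclusion can only be rescued by letting $\lambda$ grow as $p\to1^+$ (so that $\lambda\max_{[\beta-\varepsilon,\beta]}f$ stays large), a point which your analysis exposes and which the paper's one-line appeal to the analogy with Theorem~\ref{th:conv_beta} glosses over.
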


On the other hand, when $L=1$ and $f$ changes sign in $[0,\beta]$, tending $p$ to 1 causes the area condition~\eqref{eq:area_cond} to be lost. Reasoning as in Proposition~\ref{prop:rangeL}, we obtain the following.

\begin{theorem}
Let $f\in C(\R)$ satisfy $f(0)\geq 0$ and $f(s)>0$ for $s\in (\alpha,\beta)$, where $0<\alpha<\beta$ are two zeros of $f$, and let $g\in C(\R)$ be positive. If $f$ changes sign in $[0,\beta]$, there is some $\tilde p>1$ such that problem
\begin{equation*}
\left\{ \begin{alignedat}{2}
-\Delta_p u + g(u) |\nabla u|^p &= \lambda f(u) \quad &&\mbox{in} \;\; \Omega, \\
u &= 0 \quad &&\mbox{on} \;\; \partial\Omega, 
\end{alignedat}
\right.
\end{equation*}
has no nonnegative solution with maximum in $[\alpha,\beta]$ for any $p\in (1,\tilde p]$ and any $\lambda>0$.
\end{theorem}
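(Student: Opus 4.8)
The plan is to reduce everything to the nonexistence half of Theorem~\ref{th:existence}, applied for each fixed $p$. Since the problem in the statement is precisely~\eqref{eq:PbCV}, for every $p\in(1,\infty)$ that theorem gives the dichotomy governed by the area condition~\eqref{eq:area_cond}, which here reads
\[
\int_{s}^\beta f(\eta)\, e^{-\frac{p}{p-1}G(\eta)}\,\mathrm d\eta>0,\quad\forall s\in[0,\beta).
\]
Thus it suffices to produce a threshold $\tilde p>1$ such that this condition \emph{fails} for every $p\in(1,\tilde p]$, and then Theorem~\ref{th:existence} ii) yields the desired nonexistence for all $\lambda>0$. The decisive observation is that $G=\int_0^{\,\cdot}g$ is strictly increasing (because $g$ is positive) and does not depend on $p$; the sole $p$-dependence enters through the exponent $\frac{p}{p-1}$, which tends to $+\infty$ as $p\to 1^+$. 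Hence $\frac{p}{p-1}$ plays exactly the role of the parameter $L\to\infty$ in Proposition~\ref{prop:rangeL}, and I would reproduce the upper bound established there with $L$ replaced by $1$.

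Concretely, since $f$ changes sign in $[0,\beta]$ while $f(0)\geq 0$, $f(\alpha)=0$ and $f>0$ on $(\alpha,\beta)$, there is necessarily a subinterval of $(0,\alpha)$ on which $f$ is negative. I would fix $s_0\in(0,\alpha)$, $\varepsilon_0>0$ and $c_0>0$, \emph{depending only on $f$}, with $f<0$ on $[s_0,s_0+2\varepsilon_0]\subset(0,\alpha)$ and $f\leq -c_0<0$ on $[s_0,s_0+\varepsilon_0]$. Splitting the integral at $s_0+\varepsilon_0$, discarding the negative contribution on $[s_0+\varepsilon_0,s_0+2\varepsilon_0]$, and bounding each exponential by its value at the relevant endpoint via the monotonicity of $G$, exactly as in the estimate in the proof of Proposition~\ref{prop:rangeL}, I obtain
\[
\int_{s_0}^\beta f(\eta)\, e^{-\frac{p}{p-1}G(\eta)}\,\mathrm d\eta
\leq e^{-\frac{p}{p-1}G(s_0+\varepsilon_0)}\left(-c_0\varepsilon_0+(\beta-s_0-2\varepsilon_0)\,e^{-\frac{p}{p-1}\big(G(s_0+2\varepsilon_0)-G(s_0+\varepsilon_0)\big)}\max_{[s_0+2\varepsilon_0,\beta]} f\right).
\]

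Because $G(s_0+2\varepsilon_0)>G(s_0+\varepsilon_0)$ (this is where strict monotonicity of $G$, i.e.\ positivity of $g$, is essential), the exponential inside the parenthesis vanishes as $\frac{p}{p-1}\to\infty$, so the bracketed quantity converges to $-c_0\varepsilon_0<0$ as $p\to 1^+$. Consequently there is some $\tilde p>1$ for which the right-hand side is strictly negative whenever $p\in(1,\tilde p]$; choosing $s=s_0$ then shows that~\eqref{eq:area_cond} is violated for all such $p$, and Theorem~\ref{th:existence} ii) completes the argument. The only point requiring care—the \emph{main obstacle}, though it is mild—is the uniformity of the construction: one must check that $s_0,\varepsilon_0,c_0$ can be chosen once and for all, independently of $p$. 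This is guaranteed precisely because they depend only on the sign pattern of $f$ and on $G$, neither of which varies with $p$, so that the passage to a single threshold $\tilde p$ follows at once from the one limit $\frac{p}{p-1}\to\infty$.
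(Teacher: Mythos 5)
Your proposal is correct and is essentially the paper's own argument: the paper proves this theorem by "reasoning as in Proposition~\ref{prop:rangeL}," observing that the exponent $\frac{p}{p-1}$ (with $L=1$) tends to $\infty$ as $p\to 1^+$ and thus plays exactly the role of $L\to\infty$ there, which is precisely your reduction. Your explicit reproduction of the sign-splitting estimate with $p$-independent $s_0,\varepsilon_0,c_0$, followed by the appeal to Theorem~\ref{th:existence}~ii), matches the intended proof step for step.
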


\section*{Acknowledgements}
This work has been supported by the Junta de Andaluc\'ia (Spanish research group FQM-424) and by CDTIME. The second author is member of the Gruppo Nazionale per l'Analisi Ma\-te\-ma\-ti\-ca, la Probabilit\`a e le loro Applicazioni (GNAMPA) of the Istituto Nazionale di Alta Matematica (INdAM). Additionally, the third author has been supported by the FPU predoctoral fellowship of the Spanish Ministry of Universities (FPU21/04849).

\end{document}